\newcommand{\Ex}[1]{\ensuremath{\mathds{E} \left[#1 \right]}}
\newcommand{\Prob}[1]{\ensuremath{\mathds{P} \left(#1 \right)}}
\newcommand{\mbb}[1]{\mathbb{#1}}
\newcommand{\wt}[1]{\widetilde{#1}}
\newcommand{\wh}[1]{\widehat{#1}}
\newcommand{\Xc}{\mathcal{X}}
\newcommand{\Ac}{\mathcal{A}}
\newcommand{\ep}{\varepsilon}
\newcommand{\indic}{\mathds{1}}
\newcommand{\X}{\mathbb{X}}
\newcommand{\Z}{\mathbb{Z}}
\newcommand{\R}{\mathds{R}}
\newcommand{\E}{\mathds{E}}
\newcommand{\N}{\mathds{N}}
\renewcommand{\P}{\mathds{P}}
\newcommand{\Bcr}{\mathscr{B}}
\newcommand{\Fcr}{\mathscr{F}}
\newcommand{\Pcr}{\mathscr{P}}
\newcommand{\Xcr}{\mathscr{X}}
\newcommand{\Mcr}{\mathscr{M}}
\newcommand{\ddr}{\mathrm{d}}
\newcommand{\edr}{\mathrm{e}}
\def\eqd{\stackrel{\mbox{\scriptsize{d}}}{=}}
\theoremstyle{plain}
\newtheorem{thm}{\textsc{Theorem}}
\newtheorem{defi}{\textsc{Definition}}
\newtheorem{lem}{\textsc{Lemma}}
\newtheorem{cor}{\textsc{Corollary}}
\newtheorem{rmk}{Remark}
\newtheorem{exe}{\textsc{Example}}
\begin{document}
\begin{frontmatter}

\title{Canonical correlations for dependent gamma processes }
\runtitle{Canonical correlations for dependent gamma processes}

\author{\fnms{Dario Span\`o}\thanksref{t1}\ead[label=e1]{D.Spano@warwick.ac.uk}}
\and
\author{\fnms{Antonio Lijoi}\thanksref{t2}\ead[label=e2]{lijoi@unipv.it}}

\address{Dario Span\`o   \\
Department of Statistics \\
University of Warwick    \\
Coventry CV4 7ALv\\
United Kingdom \\
 \printead{e1}}

\address{Antonio Lijoi\\ Department of Economics and Management\\ University of Pavia\\ Via San
Felice 5, 27100 Pavia, Italy \\ \printead{e2}}

\runauthor{D. Span\`o and A. Lijoi}
\thankstext{t1}{Supported by CRiSM, an EPSRC-HEFCE UK grant.}
\thankstext{t2}{Also affiliated to Collegio Carlo Alberto, Moncalieri, Italy.
Supported by MIUR, grant 2008MK3AFZ.}

\affiliation{University of Warwick and Universit\`a di Pavia}

\vspace{0.4cm}
\begin{abstract}
The present paper provides a characterisation of exchangeable pairs of random measures $(\wt\mu_1,\wt\mu_2)$ whose identical margins are fixed to coincide
with the distribution of a gamma completely random measure,
and whose dependence structure is given in terms of canonical correlations.
It is first shown that canonical correlation sequences for the finite-dimensional distributions of $(\wt\mu_1,\wt\mu_2)$ are moments of means of a Dirichlet process having  random base measure.   
%A few related illustrations are provided, with some of them being of interest for applications to Bayesian statistics.
Necessary and sufficient conditions are further given for canonically correlated  gamma completely random measures to have independent joint increments. Finally, time-homogeneous Feller processes with gamma reversible measure and canonical autocorrelations are characterised as Dawson--Watanabe diffusions with independent homogeneous immigration, time-changed via an independent subordinator. It is thus shown that Dawson--Watanabe diffusions subordinated by pure drift are the only processes in this class whose time-finite-dimensional distributions have, jointly, independent increments.

\end{abstract}

\begin{keyword}[class=AMS]
\kwd[Primary ]{60G57} \kwd{60G51} \kwd{62F15}
\end{keyword}

\begin{keyword}
\kwd{Canonical correlations} \kwd{Completely random measures} \kwd{Laguerre polynomials} \kwd{Dawson-Watanabe processes}
% \kwd{Partial exchangeability} 
\kwd{Random Dirichlet means} \kwd{Extended Gamma processes}
%\kwd{Bayesian nonparametrics}
\end{keyword}

\end{frontmatter}
%\section*{This version: \today}

\section{Introduction} The definition of probability distributions on $\R^d$, with fixed margins, has a long history. An approach that stands out for its elegance, and the wealth of mathematical results it yields, is due to H.O.~Lancaster (see \cite{Lanc(58)}) who used orthogonal functions on the marginal distributions. This can be summarised in a formulation that is strictly related to the purpose of the present paper as follows. Suppose $\pi_1$ and $\pi_2$ are two probability measures on $\R^d$ and let $\{Q_{1,\bm{n}}:\: \bm{n}\in\Z_+^d\}$ and $\{Q_{2,\bm{n}}:\: \bm{n}\in\Z_+^d\}_{n=1}^\infty$ be collections of multivariate functions that form complete orthonormal sets on $\pi_1$ and $\pi_2$, respectively. Moreover, let $\bm{X}$ and $\bm{Y}$ be random vectors defined on some probability space $(\Omega,\Fcr,\P)$ and taking values in $\R^d$ with probability distribution $\pi_1$ and $\pi_2$, respectively. If for every $\bm{m}$ and $\bm{n}$ in $\Z_+^d$ one has
\begin{equation}
\E\left[ Q_{1,\bm{n}}(\bm{X})Q_{2,\bm{m}}(\bm{Y})\right]=\rho_{\bm{n}}\,\delta_{\bm{n},\bm{m}},
\label{ccdef1}
\end{equation}
with $\delta_{\bm{n},\bm{m}}=1$ if $\bm{n}=\bm{m}$ and $0$ otherwise, then $\bm{X}$ and $\bm{Y}$ are said to be canonically correlated, and $\{\rho_{\bm{n}}:\: \bm{n}\in\Z_+^d\}$ is called the \emph{canonical correlation sequence} of $(\bm{X},\bm{Y})$. %By orthogonality, $X$ and $Y$ have marginal distributions $\pi_1$ and $\pi_2$, respectively. If one further has that $\sum_{n\ge 1}\rho_n^2<\infty$, then the following expansion holds true
%\begin{equation}
%  \label{eq:expansion}
%  \pi(\ddr x,\ddr y)=\pi_1(\ddr x)\pi_2(\ddr y)\:\left\{1+\sum_{n\ge 1}\rho_n Q_{1,n}(x)\, Q_{2,n}(y)\right\}.
%\end{equation}
For $d=1$ a complete characterisation of the set of all possible canonical correlation sequences for bivariate distributions with gamma marginals is given in \cite{Bobbo(69)}. In this case the Laguerre polynomials identify a family of orthogonal functions. %These concepts naturally extend to measures on $\R^d$.

In the present paper we aim at characterising canonical correlations in infinite dimensions for pairs of random measures with identical marginal distributions given by the law of a gamma completely random measure. To this end, we first need to introduce some notation and point out a few definitions. Let $\Xc$ be a complete and separable metric space and $\Xcr$ the Borel $\sigma$-algebra on $\Xc$. Set $M_\Xc$ as the space of boundedly finite measures on $(\Xc,\Xcr)$ equipped with a Borel $\sigma$--algebra $\Mcr_\Xc$ (see \cite{daley} for details). A \textit{completely random measure} (CRM) $\wt\mu$ is a measurable mapping from $(\Omega,\Fcr,\P)$ into $(M_\Xc,\Mcr_\Xc)$ such that, for any finite collection $\{A_1,\ldots, A_d\}$ of disjoint sets in $\Xcr$ and any $d\in\N$, the random variables $\wt\mu(A_i)$ are independent, for $i=1,\ldots,d$. In particular, $\wt \mu$ is a \textit{gamma} CRM with parameter measure $cP_0$ if
\[
\phi_{(c,P_0)}(f):=\E\left[\edr^{-\wt\mu(f)}\right]=\exp\left\{-c\,\int_\Xc \log(1+f(x))\, P_0(\ddr x)\right\}
\]
for any measurable function $f:\Xc\to\R$ such that $\int \log(1+|f|)\,\ddr P_0<\infty$, where $c>0$ and $P_0$ is a probability measure on $\Xc$. Henceforth we will use the notation $\Gamma_{cP_0}$ when referring to it. Furthermore, $P_0$ is assumed non--atomic.
%For simplicity, throughout the paper we will work under the assumption that $P_0$ is non-atomic, but all the results can be proved for any choice of $P_0$, after some appropriate adjustments. \\

 {In section 4 we will define a pair of random measures $(\wt\mu_1,\wt\mu_2)$ to be in canonical correlation if all its finite-dimensional distributions have canonical correlations. The distributional properties of any such pair, when each $\wt\mu_i$ is a gamma CRM with identical parameter, will be investigated by resorting to the Laplace functional transform}
\begin{equation}
  \label{eq:biv_laplace}
  \phi(f_1,f_2)=\E\left[\edr^{-\wt\mu_1(f_1)-\wt\mu_2(f_2)}\right]
\end{equation}
for any pair of measurable functions $f_i:\Xc\to\R$ such that $\int\log(1+|f_i|)\,\ddr P_0<\infty$, for $i=1,2$. It will be shown that a vector $(\wt{\mu}_1,\wt{\mu}_2)$ of gamma CRMs has canonical correlations if and only if its joint Laplace functional has the following representation:
\begin{equation}
  \label{eq:repres_biv_lapl}
  \frac{\phi(f_1,f_2)}{\phi_{(c,P_0)}(f_1)\,\phi_{(c,P_0)}(f_2)}=
  \E\left[\exp\left\{\int_\Xc \frac{f_1f_2}{(1+f_1)(1+f_2)}\:\ddr K\right\}\right]
\end{equation}
for some random measure $K$ which can be appropriately expressed as a linear functional of an independent, gamma random measure $\wt{\mu}$ with random intensity.  This is our main Theorem \ref{thm:main}. % that is dominated by each $\wt\mu_i$ in the sense that for any $A$ in $\Xcr$ one has $\P[K(A)\le x]\ge \P[\wt\mu_i(A)\le x]$ for every $x$ in $\R$.
We will also find out a surprising fact: the canonical correlations associated to the finite--dimensional distributions of $(\wt\mu_1,\wt\mu_2)$ can be represented as mixed moments of linear functionals of Dirichlet processes. This follows from the combination of Theorems \ref{thm:zeta} and \ref{thm:main} %A \emph{Dirichlet Process} $\wt p$ on $(\X,\Xcr)$ with parameter measures $cP_0$ is a random probability measure on $\X$ obtained by normalizing a $\Gamma_{cP_0}$ random measure $\wt{\mu}$, say, with respect to its total mass, that is: $\wt p(\cdot):=\wt{\mu}(\cdot)/\wt{\mu}(\X).$ Let $f:\X\to\R$ be a measurable function such that $\int\log(1+|f|)\,\ddr P_0<\infty$.
%Starting from the original contribution in \cite{cifreg}, a number of papers have studied the distributional properties of $\wt p(f)=\int f\ddr\wt p$ when $\wt p$ is a Dirichlet process with parameter measures $cP_0$.
below and connects our work to various, and seemingly unrelated, areas of research where means of Dirichlet processes play an important role. See \cite{LPsurvmeans} far a detailed account. Using orthogonal functions expansions, along with the interpretation as moments of Dirichlet random means, simplifies the analytic treatment of canonically correlated gamma measures, and lead to easy algorithms for sampling from their joint distribution. We will describe such algorithms and show that they can all be thought of as generalisations of a neat Gibbs sampling scheme proposed in \cite{DKSC08} for a class of one-dimensional gamma canonical correlations.
%Our extensions are both in the dimension and in a higher level of randomness (e.g.: in one dimension, our algorithm looks like the Gibbs sampling scheme of \cite{DKSC08}, but with a randomized marginal scale parameter).
This analysis also allows us to identify the form of canonical correlations that yield a vector $(\wt\mu_1,\wt\mu_2)$ which is a CRM vector itself. %, i.e. such that for any pair of disjoint sets $A$ and $B$ in $\Xcr$ the random vectors $(\wt\mu_1(A),\wt\mu_2(A))$ and $(\wt\mu_1(B),\wt\mu_2(B))$ are independent.

Our investigation is, finally, extended to encompass a characterisation of canonical (auto)correlations for continuous time, time-homogeneous and reversible measure-valued Markov processes with gamma stationary distribution. We will use, once again, the connection with Dirichlet random means to prove that all the Markov processes in this class can be derived via a Bochner-type subordination of a well-known measure-valued branching diffusion process with immigration (its generator is recalled in Section \ref{sec:illustr}, (\ref{eq:MBDIgen})), to which we will refer as the $\Gamma$-Dawson-Watanabe process. We also prove that the  $\Gamma$-Dawson-Watanabe process is, up to a deterministic rescaling of time, the only instance in this class with the property that all its time-bidimensional distributions are, jointly, completely random measures.
We also use our algorithms and a result of \cite{EG93} on $\Gamma$-Dawson-Watanabe process, to derive a simple expansion for the transition function of general stationary gamma processes with canonical autocorrelations.

The outline of the paper is as follows. In Section~2 we recall some background on canonical correlations for random variables and specialise the discussion to the gamma case. In Section~3 we move on to considering canonically correlated vectors with independent gamma random components. % and provide a result that paves the way to the extension to CRMs we are aiming at. 
Algorithms for simulating such vectors are also described. In Section~4 we prove the characterisation \eqref{eq:repres_biv_lapl} of canonically correlated Gamma CRMs in terms of their joint Laplace functional and in terms of moments of Dirichlet random means. Section~5 discusses several notable examples corresponding to different specifications of the canonical correlations or, equivalently, of the driving measure $K$.
% being a \lq\lq doubly-stochastic\rq\rq variant of the so-called extended gamma process (see e.g. \cite{dykstra}), or . 
Finally, Section~6 characterises time--homogenous and reversible Markov processes with values in $M_\Xc$ with canonical autocorrelations.

\section{Canonical correlations for gamma random variables}\label{sec:1-dim} Canonical correlations were originally defined via orthogonal polynomials, and firstly applied in Statistics in \cite{Lanc(58)}. Let $\pi$ be a probability measure on $\R^d$ and $\{Q_{\bm{n}}:\: \bm{n}\in\Z^d_+\}$ a complete system of orthonormal polynomials with weight measure $\pi$ indexed by their degree $\bm{n}\in\Z^d_+$. The \emph{total degree} of $Q_{\bm{n}}$ will be denoted by $|\bm{n}|=n_1+\cdots+n_d$. Hence $\int Q_{\bm{n}} \, Q_{\bm{m}}\,\ddr\pi=\delta_{\bm{n},\bm{m}}$.
It can be easily shown that any two random vectors $\bm{X}$ and $\bm{Y}$, sharing the same marginal distribution $\pi$, are canonically correlated as in \eqref{ccdef1}, with canonical correlation sequence $\{\rho_{\bm{n}}:\bm{n}\in\Z_+^d\},$ if and only if
\begin{equation}
\E\left[Q_{\bm{n}}(\bm{Y}) \mid \bm{X}=\bm{x}\right]=\rho_{\bm{n}}\, Q_{\bm{n}}(\bm{x})\ \ \ \bm{n}\in\Z_{+}^d,
\label{eq:cexp}
\end{equation}
for every $\bm{y}$ in the support of $\pi$.
Under this condition, since any function $f$ such that $\int_{\R^d} f^2\: \ddr\pi<\infty$ has a series representation in $L^2(\R^d,\pi)$:
$$f(\bm{x})=\sum_{\bm{n}\in\Z_+^d}^\infty \wh{f}(\bm{n})Q_{\bm{n}}(\bm{x})\label{l2series}
$$
where $\wh{f}(\bm{n}):=\Ex{f(\bm{X})Q_{\bm{n}}(\bm{X})}$, for any $\bm{n}\in\Z_+^d ,$ then one has
$$\Ex{f(\bm{Y})\mid \bm{X}=\bm{x}}=\sum_{\bm{n}\in\Z_+^d} \wh{f}(\bm{n})\rho_{\bm{n}}Q_{\bm{n}}(\bm{x}).$$
Note that $\rho_{\bm{0}}$ must be equal to 1 in order, for the conditional expectation operator, to map constant functions to constant functions.
The canonical correlation coefficients $\rho$ thus contain all the information about the dependence between $\bm{X}$ and $\bm{Y}$. At the extremes, \emph{\lq\lq $\rho_{\bm{n}}=0$ for every $\bm{n}\in\Z_+^d\setminus\{\bm{0}\}$\rq\rq}\ implies independence and \emph{\lq\lq$\rho_{\bm{n}}=1$ for every ${\bm{n}}\in\Z_+^d$\rq\rq}\ corresponds to perfect dependence. Expansions in $L^2$ can be easily determined for the joint and conditional Laplace transforms as soon as an appropriate generating function for the orthogonal polynomials is available, as in the Gamma case.

We now focus on the case where $d=1$ and $\pi$ is a gamma distribution, namely
\[
\pi(\ddr x)=\Gamma_{\alpha,\beta}(\ddr x)=\frac{1}{\beta^\alpha\Gamma(\alpha)}\, x^{\alpha-1}\:\edr^{-\frac{x}{\beta}}\,\ddr x\: \indic_{(0,\infty)}(x)
\]
where $\alpha>0$ and $\beta>0$ and $\indic_A$ is the indicator function of $A$.
%\subsection{Bivariate gamma distributions.}
It is worth recalling a few well--known facts about gamma random variables in canonical correlation, mostly owed to several works of Eagleson and Griffiths (see for example  \cite{E64, Bobbo(69),Bobbo(70)}).

The Laplace transform of a $\Gamma_{\alpha,\beta}$ distribution is denoted as
%\begin{equation}
$\phi_{\alpha,\beta}(t)=\left(1+\beta t\right)^{-\alpha}.
$
%\label{phialphabeta}\end{equation}
With no loss of generality in the sequel we maintain the assumption $\beta=1$, unless otherwise stated.

A complete system of orthogonal polynomials with respect to $\Gamma_{\alpha,1}$ is represented by the Laguerre polynomials
\[
L_{n,\alpha}(x)=\frac{(\alpha)_{n}}{n!}\: _1F_1(-n;\alpha;x)
\]
for $\alpha>0$, where $_1F_1$ is the confluent hypergeometric function
 $$_1F_1(a;b;z)=\sum_{n=0}^\infty\frac{(a)_n}{(b)_n}\frac{z^n}{n!}$$
 and $(\alpha)_n=\Gamma(\alpha+n)/\Gamma(\alpha)$ is the $n$--th ascending factorial of $\alpha$. It can be easily checked that
\[
\int_{\R_+} L_{n,\alpha}(x)L_{m,\alpha}(x)\: \Gamma_{\alpha,1}(\ddr x)=\frac{(\alpha)_n}{n!}\:\delta_{n,m}.
\]
Hence, a collection of orthonormal polynomials $\{L_{n,\alpha}^*:\: n\ge 1\}$ can be defined by setting $L_{n,\alpha}^*=(n!/(\alpha)_n)^{1/2}\,L_{n,\alpha}$. Finally, Laguerre polynomials with leading coefficient equal to unity will be henceforth denoted as $\wt L_{n,\alpha}$, where $\wt L_{n,\alpha}=n!(-1)^n\, L_{n,\alpha}$. From this definition one finds out that
\begin{equation}
  \label{eq:orthog_laguerre}
  c_{n,\alpha}:=\E\left[\wt{L}_{n,\alpha}(X)^2 \right]=n!\,(\alpha)_n.
\end{equation}
Under this scaling, closed form expressions can be determined for the generating functions
\begin{equation}
G_{\alpha}(r,x):=\sum_{n=0}^\infty\wt{L}_{n,\alpha}(x)\frac{r^n}{n!}
=(1+r)^{-\alpha}\edr^{\frac{xr}{1+r}}\ \ \ |r|<1\label{Laggf}
\end{equation}
and the Laplace transform
\begin{equation}
\psi_{n,\alpha}(t):=\int_0^\infty \edr^{-t x}\wt{L}_{n,\alpha}(x)\Gamma_{\alpha,1}(\ddr x)
=(\alpha)_{n}\left(\frac{-t}{t+1}\right)^n(1+t)^{-\alpha}.\label{Laglap}
\end{equation}
See \cite{E64}. 
%Under this scaling the so-called Runge-type identity holds (see e.g. \cite{G69}):
%\begin{equation}
%\wt{L}_{n,\alpha+\beta}(x+y)=\sum_{j=0}^n{n\choose j}\wt{L}_{j,\alpha}(x)\wt{L}_{n-j,\beta}(y).
%\label{eq:runge}
%\end{equation}
An important result we refer to was proved in \cite{Bobbo(69)} and is as follows.

\begin{thm}[Griffiths \cite{Bobbo(69)}]\label{th:G69}
A sequence $(\rho_n)_{n\ge 0}$ is a sequence of canonical correlation coefficients of the law of a pair $(X,Y)$ with identical marginals  $\Gamma_{\alpha,1}$  if and only if $\rho_n=\E[Z^n]$, for a random variable $Z$ whose distribution has support $[0,1]$.
\end{thm}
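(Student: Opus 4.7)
The plan is to prove the two implications separately. Sufficiency is the constructive direction, exhibiting an explicit joint distribution realising any prescribed moments on $[0,1]$; necessity, the main obstacle, requires a Hausdorff-type characterisation that upgrades the nonnegativity of the Laguerre bilinear sum to complete monotonicity of $(\rho_n)$.

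For sufficiency, given $Z\in[0,1]$ with law $\mu$, I would construct $(X,Y)$ as a mixture over $z\in[0,1]$ of Kibble--Moran bivariate gammas. For each $z\in[0,1)$, the Hardy--Hille bilinear generating formula for Laguerre polynomials gives
\[
\sum_{n=0}^\infty \frac{n!}{(\alpha)_n}\,L_{n,\alpha}(x)L_{n,\alpha}(y)\,z^n\ =:\ K_\alpha(x,y;z)\ \ge\ 0,
\]
with $K_\alpha$ admitting a closed form involving a modified Bessel function. Hence $f_z(x,y):=K_\alpha(x,y;z)\,\Gamma_{\alpha,1}(x)\Gamma_{\alpha,1}(y)$ is a bona fide bivariate density with $\Gamma_{\alpha,1}$ marginals (by orthogonality of Laguerre polynomials) and canonical correlation sequence $(z^n)_{n\ge 0}$; the endpoint $z=1$ corresponds to the diagonal law on $\{x=y\}$. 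The mixture $f_\mu:=\int_{[0,1]}f_z\,\mu(\ddr z)$ then has $\Gamma_{\alpha,1}$ marginals and, by Fubini, canonical correlations $\rho_n=\int_0^1 z^n\,\mu(\ddr z)=\E[Z^n]$.

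For necessity, I would start from the Laguerre expansion $f(x,y)=\Gamma_{\alpha,1}(x)\Gamma_{\alpha,1}(y)\sum_{n\ge 0}\rho_n L_{n,\alpha}^*(x)L_{n,\alpha}^*(y)\ge 0$ of the joint density. Computing the bivariate Laplace transform via \eqref{Laglap} yields
\[
\frac{\E[\edr^{-sX-tY}]}{\phi_{\alpha,1}(s)\phi_{\alpha,1}(t)}\ =\ \sum_{n\ge 0}\rho_n\,\frac{(\alpha)_n}{n!}\,u^n\ =:\ H(u),\qquad u=\frac{st}{(1+s)(1+t)}\in[0,1),
\]
so the normalised joint Laplace transform depends on $(s,t)$ only through the product $u$. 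The target is the representation $H(u)=\E[(1-Zu)^{-\alpha}]$ for some $[0,1]$-valued $Z$, which would yield $\rho_n=\E[Z^n]$ by power-series matching. The crux is upgrading pointwise nonnegativity of the bilinear sum to complete monotonicity of $(\rho_n)$; a direct finite-difference attack fails because the polynomial kernel $\sum_j\binom{k}{j}(-1)^j L_{n+j,\alpha}^*(x)L_{n+j,\alpha}^*(y)$ is sign-changing on $\R_+^2$ (already visible for $k=1$, $\alpha>1$ near the origin). My preferred route is a Choquet--Bauer argument: the set $\mathcal{C}\subset[-1,1]^{\Z_+}$ of admissible canonical correlation sequences is compact and convex in the product topology (Cauchy--Schwarz gives $|\rho_n|\le 1$), and its extreme points can be identified as precisely the power sequences $\{(z^n)_{n\ge 0}:z\in[0,1]\}$ realised by the Kibble densities, by matching generating functions $H_z(u)=(1-zu)^{-\alpha}$ and invoking analyticity of $H$ on $[0,1)$. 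Choquet's theorem then furnishes a representing probability $\mu$ on $[0,1]$ with $\rho_n=\int_0^1 z^n\,\mu(\ddr z)=\E[Z^n]$, uniqueness of $\mu$ following from determinacy of the Hausdorff moment problem.
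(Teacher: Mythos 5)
Your sufficiency half is fine and is essentially the standard construction the paper itself relies on: the Hardy--Hille kernel is exactly the extreme density \eqref{analyt} with Laplace transform \eqref{extremejlap}, and mixing over $z$ with the law of $Z$ gives $\rho_n=\E[Z^n]$ by Fubini. (A small caveat: in the necessity setup you should not assume a joint density with a pointwise-convergent Laguerre expansion; the identity for the normalised joint Laplace transform follows directly from \eqref{eq:cexp} and \eqref{Laglap} in $L^2(\Gamma_{\alpha,1})$, since $\edr^{-sx}$ is square-integrable, so this is cosmetic.)

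The genuine gap is in the necessity argument. Your Choquet--Bauer route hinges on the claim that the extreme points of the compact convex set $\mathcal{C}$ of gamma canonical correlation sequences are \emph{precisely} the power sequences $\{(z^n)_{n\ge0}:z\in[0,1]\}$, and you offer no argument for this beyond ``matching generating functions and invoking analyticity''. Showing that the power sequences lie in $\mathcal{C}$ and are extreme is the easy inclusion; showing that \emph{every} extreme point is a power sequence is exactly the content of the theorem (once you know every element of $\mathcal{C}$ is a mixture of power sequences, the extreme points are trivially identified --- so the step as written is circular). There is no soft reason why an extreme canonical correlation sequence should be multiplicative, i.e.\ satisfy $\rho_{m+n}=\rho_m\rho_n$; that is what has to be proved. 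The way this is actually closed --- in Griffiths' original paper and, in $d$ dimensions, in the proof of Theorem~\ref{thm:dprodcc} here --- is a scaling-limit argument that you do not need Choquet theory for: starting from the expansion \eqref{djointl} of $\E[\edr^{-sY}\mid X=x]$ (absolutely convergent because $|\rho_n|\le1$, by comparison with \eqref{domin}), one considers $\E[\edr^{-sY/X}\mid X=x]$ and lets $x\to\infty$; since $\wt L_{n,\alpha}(x)/(s+x)^n\to1$, the limit is $\phi_\rho(s)=\sum_n\rho_n(-s)^n/n!$, which is continuous at $0$ and hence, by L\'evy continuity, the Laplace transform of a law on $\R_+$. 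Thus $\rho_n=\E[Z^n]$ for some $Z\ge0$, and $\rho_n\le1$ for all $n$ forces $Z\in[0,1]$ almost surely. Either import that argument (specialised to $d=1$) or supply an independent identification of the extreme points; as it stands, your necessity proof does not go through.
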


%\noindent Call any sequence $\{\rho_n\}$ of Theorem \ref{th:G69} a (one-dimensional) \emph{gamma canonical correlation sequence}.
\noindent Theorem \ref{th:G69} implies that the set of all gamma canonical correlation sequences is convex and its extreme points are in the set $\left\{(z^n)_{n\ge 0}:\: z\in[0,1]\right\}$. For any $z\in[0,1]$, set $\E_z$ as the expected value computed with respect to the probability distribution of $(X,Y)$ associated to the ``extreme'' canonical correlation sequence $(z^n)_{n\ge 0}$. Then
\begin{align}
  \phi_z(s,t)
  &=
  \E_z\left[\edr^{-sX-tY}\right]\notag
  %\\[7pt]
  %&
  ={(1+s)^{-\alpha}(1+t)^{-\alpha}\sum_{n\ge 0}\frac{(\alpha)_n}{n!}
  \, z^n\,\theta^n}\notag\\%[7pt]
  &=\phi_{\alpha,1}(s)\,\phi_{\alpha,1}(t)\,\phi_{\alpha,z}(-\theta)
\label{extremejlap}\end{align}
where $\theta=st(1+s)^{-1}(1+t)^{-1}.$ The corresponding density is well-known (see e.g. \cite{AA}, formula (6.2.25), p. 288) to be
\begin{equation}
p_z(\ddr x,\ddr y)=\Gamma_{\alpha,1}(\ddr x)\Gamma_{\alpha,1}(\ddr y)\frac{\edr^{-\frac{(x+y)z}{1-z}}}{(xyz)^{\frac{\alpha-1}{2}}}I_{\alpha-1}\left(\frac{2(xyz)^{\frac{1}{2}}}{1-z}\right),
\label{analyt}
\end{equation}
where $I_{c}$ is the modified Bessel function of order $c$, $c>-1$. If we allow the re-parametrisation $z=\edr^{-t}$ (for some $t\geq 0$), we recognise in (\ref{analyt}) the well-known joint distribution of two coordinates, taken a time $t$ apart from each other, of a one-dimensional continuous state branching process with homogeneous immigration (CSBI), also known, in the literature of mathematical finance, as the Cox-Ingersol-Ross (CIR) process, whose generator, under an appropriate scaling of the parameters, is given by
\begin{equation}
Lf=\frac{1}{2}x\frac{\partial^2}{\partial x^2}f+\frac{1}{2}(\alpha - x)\frac{\partial}{\partial x}f\label{1csbi},
\end{equation} for $f\in C^2_c([0,\infty)).$ The measure-valued extension of this process will play a central role in the second part of this paper (Sections 5.1 and \ref{sec:markovcc}). One can use (\ref{Laglap}) and (\ref{Laggf}) to deduce a simple form for the conditional Laplace transform of $Y$ given $X$ in the extreme family, namely
\begin{align}
\E_z\left[\edr^{-sY}\,\mid\, X=x\right]%\\
%&=&\sum_{n=0}^\infty \frac{z^n}{c_{n}}\psi_{n,\alpha}(s)\wt{L}_{n,\alpha}(x)\notag \\
&=
(1+s)^{-\alpha}G_{\alpha}\left(\frac{-sz}{1+s},x\right)\notag\\
&=\phi_{\alpha,1}(s(1-z))\,
\edr^{-x\frac{sz}{1+s(1-z)}}.\label{phizx}
\end{align}
Thus, for general gamma canonical correlation sequences $\rho=(\rho_n)_{n\ge 0}$, by Theorem~\ref{th:G69} the corresponding joint Laplace transform is a mixture of extreme Laplace transforms $\phi_\rho(s,t)=\Ex{\phi_Z(s,t)}$,
where $Z$ is the random variable defining $\rho$ via $\rho_n=\Ex{Z^n}.$

%\textcolor{blue}{Toglierei l'esempio di seguito, oppure la parte precedente sulle Laplace, se non utilizzata nelle sezioni successive} \begin{exe}\label{exe:rec}
%\textcolor{red}{{\rm An important internal point of $C_\alpha$ corresponds to $Z$ being beta distributed with parameter $(1-\zeta)\alpha$ and $\zeta\alpha$, for some $\zeta\in(0,1)$. In this case one obviously has $\rho_n=(\alpha(1-\zeta))_{n}/(\alpha)_n$,
%where $\zeta\in(0,1)$ is fixed. This sequence identifies the model with random elements in common
%\[
%X\overset{d}{=}U+V\qquad\qquad
%Y\overset{d}{=}U+W
%\]
%where the distribution of $U$ is $\Gamma_{\alpha(1-\zeta),1}$, while $V$ and $W$ share the same $\Gamma_{\alpha\zeta,1}$ distribution. Moreover, $U$, $V$ and $W$ are independent.
%Their joint Laplace transform is, by construction,
%\begin{align*}
%\phi^*(s,t)&=
%\phi_{\alpha,1}(s)\phi_{\alpha,1}(t)\left(\frac{\phi_{\alpha,1}(s+t)}
%{\phi_{\alpha,1}(s)\phi_{\alpha,1}(t)}\right)^{1-\zeta}\\[7pt]
%&=(1+s)^{-\alpha}(1+t)^{-\alpha}\,
%\left(1-\frac{st}{(1+s)(1+t)}\right)^{-\alpha(1-\zeta)}.
%\end{align*}
%From (\cite{patil}, formula (4.9)) one has
%\[
%\sum_{n\ge 0}\frac{(\alpha(1-\zeta))_n}{(\alpha)_n}\:t^n\:L_{n,\alpha}(x)=
%(1-t)^{-\alpha(1-\zeta)}\: _1F_1\left(\alpha(1-\zeta);\alpha-1;-\frac{xt}{1-t}\right)
%\]
%for any $t\in(0,1)$. Hence, one easily deduces that
%\begin{prp}
%\[
%\E\left[\edr^{-sY}\,\mid\,X=x\right]=(1+s)^{-\alpha\zeta}\:
%_1F_1(\alpha(1-\zeta);\alpha-1;-xs).
%\]
%\end{prp}
%}}
%\end{exe}
%\subsection{Algorithms for simulation.}
A simple algorithm is available to simulate from the joint distribution of a pair $(X,Y)$ of $\Gamma_{\alpha,1}$ random variables with canonical correlation in the extreme family (i.e. of the form $\rho_n=z^n$, $n\in\Z_+$, $z\in(0,1)$). After re-scaling, the algorithm reduces to a version of a Gibbs-sampling scheme (the so-called Poisson/Gamma $\theta$-chain) proposed by Diaconis, Khare and Saloff-Coste (\cite{DKSC08}, Proposition 4.6) to build one-step Markov kernels with $\Gamma_{\alpha,\beta}$ stationary measure and orthogonal polynomial eigenfunctions.

\medskip

 \textsc{Algorithm A.1}
\begin{enumerate}
\item[\texttt{(0)}] Initialise by choosing $\alpha>0$ and $b\in[0,\infty)$
\item[\texttt{(i)}] Generate a sample $X=x$ from the $\Gamma_{\alpha,1}$ measure
\item[\texttt{(ii)}] Generate $N=n$ from a Poisson distribution with mean $bx$
\item[\texttt{(iii)}] Generate ${Y}$ from $\Gamma_{\alpha+n,(1+b)^{-1}}$
\end{enumerate}

%\noindent It is apparent that the distribution of $Y_i$ in (iii) above can be seen as the posterior distribution,  of the parameter $\theta$ of a Poisson distribution when the prior for $\theta$ is $\Gamma_{\alpha, b^{-1}}$. It can also be alternatively seen as the posterior distribution of the random mass assigned by a gamma CRM with parameter $cP_0$ to set $A$, conditional on a sample of exchangeable observations directed by a Dirichlet process with baseline measure $cP_0$ and such that $n$ of the data belong to $A$.\\

We thus have:
\begin{lem}\label{prp:A1}
The pair $(X,Y)$ generated by Algorithm A.1 is {exchangeable}, with identical $\Gamma_{\alpha,1}$ marginal distributions and canonical correlation coefficients of the form $\rho_n=z^n$, $n\in\Z_+$ where $
z=b/(1+b)$.
Conversely, every bivariate gamma pair with canonical correlations of the form $\rho_n=z^n$, $n\in\Z_+,z\in(0,1),$ can be generated via Algorithm A.1 with $b=z/(1-z).$ \end{lem}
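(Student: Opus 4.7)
\textbf{Proof plan for Lemma \ref{prp:A1}.} The plan is to verify the forward direction by direct computation of the joint Laplace transform of $(X,Y)$ produced by Algorithm~A.1 and matching it with the extreme Laplace transform $\phi_z(s,t)$ given in \eqref{extremejlap}. The converse will follow from the fact that, given identical $\Gamma_{\alpha,1}$ marginals, the canonical correlation sequence $\{\rho_n\}_{n\ge 0}$ determines the joint distribution uniquely.

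First I would compute $\phi(s,t) := \E[\edr^{-sX-tY}]$ by iterated conditioning on $(N,X)$. Using the gamma Laplace transform,
\[
\E\bigl[\edr^{-tY}\mid N=n,\,X=x\bigr] = \left(\frac{1+b}{1+b+t}\right)^{\alpha+n}.
\]
Taking expectation over $N\mid X=x \sim \mathrm{Poisson}(bx)$ and using the Poisson probability generating function $\E[r^N]=\edr^{bx(r-1)}$ with $r=(1+b)/(1+b+t)$, one obtains
\[
\E\bigl[\edr^{-tY}\mid X=x\bigr]=\left(\frac{1+b}{1+b+t}\right)^{\alpha}\exp\!\left\{-\frac{bxt}{1+b+t}\right\}.
\]
Multiplying by $\edr^{-sx}$ and integrating against $\Gamma_{\alpha,1}(\ddr x)$ produces, after simple algebra,
\[
\phi(s,t)=\left(1+s+t+\frac{st}{1+b}\right)^{-\alpha}.
\]
Setting $z=b/(1+b)$, so that $1-z=1/(1+b)$, one has $(1+s)(1+t)-zst = 1+s+t+st(1-z)$, and hence
\[
\phi(s,t)=(1+s)^{-\alpha}(1+t)^{-\alpha}\bigl(1-z\theta\bigr)^{-\alpha},\qquad \theta=\frac{st}{(1+s)(1+t)},
\]
which is exactly $\phi_z(s,t)$ in \eqref{extremejlap}. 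Symmetry of $\phi(s,t)$ in $(s,t)$ gives exchangeability and setting $t=0$ yields the $\Gamma_{\alpha,1}$ marginal; the canonical correlation coefficients are $\rho_n=z^n$ by construction of $\phi_z$.

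For the converse, observe that any bivariate gamma law with marginals $\Gamma_{\alpha,1}$ and canonical correlation sequence $(\rho_n)_{n\ge 0}$ is determined, via the Laguerre expansion of its joint density in $L^2(\Gamma_{\alpha,1}\otimes \Gamma_{\alpha,1})$, by the sequence $(\rho_n)$; equivalently, by Theorem~\ref{th:G69} the joint Laplace transform is uniquely given by $\phi_\rho(s,t)=\E[\phi_Z(s,t)]$. When $\rho_n=z^n$ with $z\in(0,1)$, the distribution coincides with $\phi_z(s,t)$; taking $b=z/(1-z)$ we have $b/(1+b)=z$, so by the forward direction Algorithm~A.1 with parameter $b$ realises exactly this law.

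The only non-routine point is the Poisson/gamma mixing step, which must be arranged so that the factor $\edr^{-sx}$ can be combined with the exponential tilt coming from the Poisson generating function into a single gamma Laplace transform; once the bookkeeping is done, the identification with $\phi_z$ is a one-line algebraic manipulation via the reparametrisation $z=b/(1+b)$.
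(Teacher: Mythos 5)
Your proposal is correct and follows essentially the same route as the paper: both compute the Poisson/gamma mixture via the Poisson generating function and the gamma Laplace transform and then identify the result with the extreme family; the paper stops at the conditional transform \eqref{phizx}, while you integrate once more and match the joint transform \eqref{extremejlap}, which is an immaterial difference. The converse is also handled as in the paper, by noting that the canonical correlation sequence (equivalently, the matched Laplace transform) determines the joint law uniquely.
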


\begin{proof}
From the algorithm, one can see that, for every $x\in[0,\infty),$ the conditional distribution of $Y$ given $X$ has the representation
\begin{equation}
p_b(x,dy)=\sum_{n=0}^\infty\frac{(bx)^n\edr^{-bx}}{n!} \Gamma_{\alpha+n,(1+b)^{-1}}(dy).\label{ctxtr}
\end{equation}
Thus the conditional Laplace transform is:
\begin{eqnarray}
\Ex{\edr^{-tY}\mid X=x}&=&\sum_{n=0}^{\infty}\frac{(bx)^n\edr^{-bx}}{n!}\phi_{\alpha+n,(1+b)^{-1}}(t)\notag\\
 &=&\left(1+\frac{t}{1+b}\right)^{-\alpha}\sum_{n=0}^{\infty}\frac{(bx)^n\edr^{-bx}}{n!}\left(\frac{1+b}{1+b+t}\right)^{n}\notag\\
 &=&\left(1+\frac{t}{1+b}\right)^{-\alpha}\exp\left\{bx\left(\frac{1+b}{1+b+t}-1\right)\right\}\notag\\
 &=&\left(1+{t}(1-z)\right)^{-\alpha}\edr^{-\frac{xtz}{1+t(1-z)}},\label{pgfmgf}
 \end{eqnarray}
where the second equality comes from the form of the gamma Laplace transform, the third from the form of the Poisson probability generating function, and the last one by substituting $z=b/(1+b).$ The right-hand side of (\ref{pgfmgf}) is equal to (\ref{phizx}) and that suffices to prove both the necessity and the sufficiency part of the Lemma.
\end{proof}

If we denote $U=bX$ and $V=bY,$ where $(X,Y)$ is the pair generated by Algorithm A.1, then $(U,V)$ is a bivariate pair with identical $\Gamma_{\alpha,b}$ marginal laws. Furthermore, the distribution of $V$ conditional on $U=u$
%$$K(u,dv)=\sum_{n}\frac{u^n\edr^{-u}}{n!} \Gamma_{\alpha+n,\frac{b}{1+b}}(dv),\ \ u\in\R_+$$
coincides with the one-step transition distribution of Diaconis \emph{et al.}'s $\theta$-chain. See \cite{DKSC08}, formula (4.7), and compare it to (\ref{ctxtr}). Thus one can think of canonical correlations in the extreme family %$\mathcal{C}^{ex}_{\alpha}$, of the form $\rho_n=z^n,\ n=0,1,2,\ldots,$
as the one-step correlations obtained by re-scaling the values of the $\theta$-chain by a factor of $(1-z)/z$. Lemma~\ref{prp:A1} also shows that Algorithm A.1 (with $b=\edr^{-t}/(1-\edr^{-t})$) is all one needs to simulate from the bivariate density of a CSBI/CIR process with generator (\ref{1csbi}).\\
In the light of Theorem \ref{th:G69}, algorithm A.1 can be extended in a way that it incorporates a randomisation of $b$: the resulting variable, denoted as $B$, has some probability distribution $P^*$ on $\R_+$ and is independent of $X$ and $Y$.  One, then, runs algorithm A.1 conditional on $B=b$ and the output will still be a set of realisations of a canonically correlated vector $(X,Y)$ with $\Gamma_{\alpha,1}$ marginals. This is the key idea at the basis of Algorithm A.2 proposed in the next section to generate general canonically correlated gamma vectors in $d$ dimensions.
%\medskip
%
% \textsc{Algorithm A.2}
%\begin{enumerate}
%\item[\texttt{(0)}] Initialize by choosing $\alpha>0$ a probability distribution $H$ on $[0,\infty)$
%\item[\texttt{(i)}] Generate $B=b$ from $H$.
%\item[\texttt{(ii)}] Run Algorithm A.1 as initialized by $(\alpha,b).$
%\end{enumerate}
%The next Proposition can be immediately worked out from Lemma \ref{prp:A1} and \ref{th:G69}. Alternatively it can be derived as the a special case (for $d=1$) of Theorem \ref{thm:d_vectors} given in the next Section.
%\begin{prp}\label{prp:1dimalg}
%Every pair $(X,Y)$ with identical $\Gamma(\alpha,1)$ marginal distribution and canonical correlations has a joint distribution that can be simulated via Algorithm A.2, for an appropriate choice of distribution $H$ on $\mbb{R}_+$.
%The canonical correlation coefficients $\rho_n$ will then be of the form:
%$$\rho_n=\Ex{Z^n},\ \ n=0,1,\ldots ,$$where $$Z:=\frac{B}{1+B}$$ and $B$ is a random variable with distribution $H.$
%\end{prp}
%It is apparent that the specification of $H$ rules the dependence between the random vectors ${X}$ and ${Y}$.\\
%

\section{Canonical correlations and algorithms for gamma product measures}
In the present Section we derive a characterisation of canonical correlations for pairs of random vectors with independent gamma coordinates, extending Griffiths' result recalled in Theorem~\ref{th:G69}. It is a preliminary step for stating the main result that will be discussed in the next Section. We shall focus on $d$--dimensional vectors $\bm{X}$ and $\bm{Y}$ having the same marginal distribution, $\pi_1=\pi_2=\Gamma_{\bm{\alpha},1}:=\times_{i=1}^d \Gamma_{\alpha_i,1}$. Since it is always possible to generate orthogonal polynomials for $d$-fold product measures as products of the $d$ corresponding \lq\lq marginal\rq\rq orthogonal polynomials, canonical correlations are then determined by \eqref{ccdef1} with
\[
Q_{1,\bm{n}}(\bm{x})=Q_{2,\bm{n}}(\bm{x})=\prod_{i=1}^d \frac{\wt{L}_{n_i,\alpha_i}(x_i)}{\sqrt{c_{n_i,\alpha_i}}}
\]
where $\wt{L}_{n_i,\alpha_i}$ stands for the Laguerre polynomial of degree $n_i$ with leading coefficient of unity and constant of orthogonality $c_{n_i,\alpha_i}$ as described in Section \ref{sec:1-dim}, and we will henceforth use the short notation $\wt{L}_{\bm{n},\bm{\alpha}}(\bm{x})=\prod_{i=1}^d \wt{L}_{n_i,\alpha_i}(x_i)$ for any $\bm{x}=(x_1,\ldots,x_d)\in\R^d$, $\bm{n}=(n_1,\ldots,n_d)\in\N^d$ and $\bm{\alpha}=(\alpha_1,\ldots,\alpha_d)\in (0,\infty)^d$.
%Let us first define canonical correlations for $d$-dimensional vectors. Let $\pi$ be a probability measure on $\R^d$ and $\{Q_{\bm{n}}:\: \bm{n}\in\Z_+^d\}$ a complete system of multivariate orthonormal polynomials on $\pi$ indexed by vectors $\bm{n}=(n_1,\ldots,n_d)$ of non--negative integers. Hence $\int Q_{\bm{n}} \, Q_{\bm{m}}\,\ddr\pi=\delta_{\bm{n},\bm{m}}$.
%\begin{defi}\label{def:can.corr.vect.}
%The vector $(\bm{X},\bm{Y})$, with $\bm{X}$ and $\bm{Y}$ taking values in $\R^d$ and having the same marginal distribution $\pi$, is said to have canonical correlations $\rho=\{\rho_{\bm{n}}:\: \bm{n}\in\Z_+^d\}$ if its distribution is such that
%  \begin{equation}
%    \label{eq:def_cc}
%    \E\left[Q_{\bm{n}}(\bm{X})\,Q_{\bm{m}}(\bm{Y})\right]=
%    \rho_{\bm{n}}\:\delta_{\bm{n},\bm{m}}
%  \end{equation}
%for any $\bm{n}$ and $\bm{m}$ in $\Z_+^d$, or, equivalently,
%\begin{equation}
%\E\left[Q_{\bm{n}}(\bm{Y}) \mid \bm{X}=\bm{x}\right]=\rho_{\bm{n}}\,
%Q_{\bm{n}}(\bm{x}),\ \ \ \bm{n}\in\Z_+^d
%\label{part(ii)}
%\end{equation}
%\end{defi}
The leading coefficient is equal to one and $c_{\bm{n},\bm{\alpha}}:=\E[\wt{L}_{\bm{n},\bm{\alpha}}^2(\bm{X})]=\prod_{i=1}^d n_i!\,(\alpha_i)_{n_i}$. A key for generating canonically correlated gamma random vectors is provided by the following algorithm.

\medskip

\textsc{Algorithm A.2}
\begin{enumerate}
\item[\texttt{(0)}] Initialise by setting $\bm{\alpha}\in\R_+^d$ and a probability distribution $P^*$ on $\R_+^d$
\item[\texttt{(i)}] Generate a sample $\bm{B}=\bm{b}$ from $P^*$
\item[\texttt{(ii)}] Given $\bm{b}$, generate $d$ conditionally independent runs of Algorithm A.1 whereby, for every $j=1,\ldots,d$, the $j$-th run is initialisiseed by $(\alpha_j,b_j).$ \item[\texttt{(iii)}] Return $X_j,Y_j$  for any $j=1,\ldots,d$.
\end{enumerate}
\medskip

According to the next statement, the above algorithm does indeed characterise canonically correlated gamma random vectors and further provides a description of the canonical correlations.

\begin{thm}
\label{thm:dprodcc} Let $(\bm{X},\bm{Y})$ be a pair of random vectors in $\R_+^d$ with identical marginal distribution $\Gamma_{\bm{\alpha},1}$, with $\bm{\alpha}\in \R_+^d.$ Then  $(\bm{X},\bm{Y})$ has canonical correlations if and only if it can be generated via {\rm Algorithm~A.2} for some probability measure $P^*$ on $\R_+^d.$ The canonical correlation coefficients are of the form
\begin{equation}\label{eq:can.corr.mult}
  \rho_{\bm{n}}=\E\left[\prod_{i=1}^d Z_i^{n_i}\right]
\end{equation}
where $Z_i=B_i/(1+B_i),$ $i=1,\ldots,d,$ and the vector $\bm{B}=(B_1,\ldots,B_d)$ has distribution $P^*$.
\end{thm}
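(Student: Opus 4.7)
For the sufficiency direction, I would condition on $\bm B=\bm b$ drawn from $P^*$: by the construction of Algorithm A.2 the coordinate pairs $(X_j,Y_j)$, $j=1,\ldots,d$, are then conditionally independent, each distributed as the A.1 output initialised at $(\alpha_j,b_j)$. Lemma~\ref{prp:A1} applied coordinate-wise gives
\[
\E\bigl[\wt L_{n_j,\alpha_j}(Y_j)\bigm|X_j,\bm B\bigr]=Z_j^{n_j}\,\wt L_{n_j,\alpha_j}(X_j),\qquad Z_j=B_j/(1+B_j).
\]
Since $\wt L_{\bm n,\bm \alpha}(\bm x)=\prod_j\wt L_{n_j,\alpha_j}(x_j)$ factorises and step (i) of A.1 draws $X_j\sim\Gamma_{\alpha_j,1}$ without using $b_j$ (so that $\bm X\perp\bm B$), multiplying across $j$ and averaging over $\bm B$ yields
\[
\E\bigl[\wt L_{\bm n,\bm\alpha}(\bm Y)\bigm|\bm X\bigr]=\E\Bigl[\prod_{j=1}^d Z_j^{n_j}\Bigr]\,\wt L_{\bm n,\bm\alpha}(\bm X),
\]
which is exactly the canonical correlation identity (\ref{eq:cexp}) with $\rho_{\bm n}$ in the stated form (\ref{eq:can.corr.mult}).

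For necessity, my plan is to identify $P^*$ from $(\rho_{\bm n})$ by matching Laplace transforms. Expanding $e^{-\bm t\cdot\bm y}$ in the orthonormal Laguerre basis by means of (\ref{Laglap}) and inserting the canonical correlation relation into $\phi(\bm s,\bm t)=\E\bigl[e^{-\bm s\cdot\bm X}\,\E[e^{-\bm t\cdot\bm Y}\mid\bm X]\bigr]$ should produce
\begin{equation}\label{eq:plan-phi}
\phi(\bm s,\bm t)=\phi_{\bm\alpha,1}(\bm s)\,\phi_{\bm\alpha,1}(\bm t)\sum_{\bm n\in\Z_+^d}\rho_{\bm n}\prod_{j=1}^d\frac{(\alpha_j)_{n_j}}{n_j!}\,\theta_j^{n_j},\qquad \theta_j=\frac{s_jt_j}{(1+s_j)(1+t_j)}\in[0,1).
\end{equation}
In parallel, (\ref{extremejlap}) applied coordinate-wise together with Fubini gives the joint Laplace transform of the A.2 output under a generic $P^*$ as
\[
\phi^{\mathrm{A.2}}(\bm s,\bm t)=\phi_{\bm\alpha,1}(\bm s)\,\phi_{\bm\alpha,1}(\bm t)\int_{\R_+^d}\prod_{j=1}^d(1-z_j\theta_j)^{-\alpha_j}\,P^*(\ddr\bm b),\qquad z_j=\frac{b_j}{1+b_j}.
\]
Expanding $\prod_j(1-z_j\theta_j)^{-\alpha_j}$ as a power series in $\bm\theta$ and comparing term by term with (\ref{eq:plan-phi}), $\phi$ and $\phi^{\mathrm{A.2}}$ coincide if and only if $\rho_{\bm n}=\int_{\R_+^d}\prod_j z_j^{n_j}\,P^*(\ddr\bm b)$. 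Under the change of variable $z_j=b_j/(1+b_j)$ this is exactly the multivariate moment problem on $[0,1]^d$.

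This last reduction is what I expect to be the main obstacle. For $d=1$ it is Theorem~\ref{th:G69}. For $d\ge 2$ the plan is to invoke the multivariate Hausdorff moment theorem, verifying complete monotonicity $(-1)^{|\bm k|}\Delta^{\bm k}\rho_{\bm n}\ge 0$ for every $\bm k,\bm n\in\Z_+^d$, with $\Delta^{\bm k}$ the multivariate forward-difference operator. Complete monotonicity should follow from the non-negativity of the joint density $p(\bm x,\bm y)=\sum_{\bm n}\rho_{\bm n}Q_{\bm n}(\bm x)Q_{\bm n}(\bm y)$ relative to $\Gamma_{\bm\alpha,1}^{\otimes 2}$, by testing $p$ against non-negative tensorised weights built from the Laguerre generating function (\ref{Laggf}); the product-gamma structure makes the resulting positivity inequalities factorise across coordinates. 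An alternative, possibly cleaner, route would be a Choquet decomposition on the convex, compact (in the product topology) set of admissible CC sequences, identifying its extreme points with the factorised sequences $\{(\prod_j z_j^{n_j})_{\bm n}:\bm z\in[0,1]^d\}$ through a coordinate-wise reduction to Theorem~\ref{th:G69}. Once such a $P^*$ is exhibited, the sufficiency direction applied to it produces a pair with the same marginals and the same canonical correlation sequence as $(\bm X,\bm Y)$, and hence, by $L^2$-completeness of the Laguerre basis in $L^2(\Gamma_{\bm\alpha,1}^{\otimes 2})$, with the same joint distribution.
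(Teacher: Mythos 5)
Your sufficiency argument is essentially the paper's (condition on $\bm{B}$, apply Lemma~\ref{prp:A1} coordinatewise, average over $P^*$), and your reduction of necessity to the statement that $(\rho_{\bm{n}})$ is a Hausdorff moment multisequence on $[0,1]^d$ reproduces the paper's expansion \eqref{djointl}. The gap is that this moment statement \emph{is} the entire content of the necessity claim, and neither of your two sketches establishes it. For the complete-monotonicity route: positivity of the joint law only gives inequalities of the form $\sum_{\bm{n}}\hat f(\bm{n})\hat g(\bm{n})\rho_{\bm{n}}\ge 0$ for nonnegative $f,g$, because the pairing is diagonal in the Laguerre basis; to extract $(-1)^{|\bm{k}|}\Delta^{\bm{k}}\rho_{\bm{n}}\ge 0$ you would need nonnegative test functions with prescribed, finitely supported, sign-alternating Laguerre coefficients, and the generating function \eqref{Laggf} does not supply them --- testing against $G_{\alpha_1}(r,x)G_{\alpha_1}(u,y)\ge 0$ only yields positivity of $\sum_n \rho_n \frac{(\alpha_1)_n}{n!}(ru)^n$ for $|ru|<1$, which is much weaker than complete monotonicity. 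For the Choquet route: compactness and convexity are easy, but identifying the extreme points with the product sequences $\{\prod_j z_j^{n_j}\}$ is precisely what has to be proved; a coordinatewise appeal to Theorem~\ref{th:G69} only controls the coefficients $\rho_{\bm{n}}$ with $\bm{n}$ supported on a single coordinate, and says nothing about the existence of a single coupled vector $\bm{Z}\in[0,1]^d$ whose \emph{mixed} moments give the cross-coordinate $\rho_{\bm{n}}$.

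The paper closes exactly this gap with a concrete limiting device absent from your plan: starting from \eqref{eq:cexp} and \eqref{Laglap}, it computes the conditional Laplace transform of $\bm{Y}/x$ given $X_1=\cdots=X_d=x$; since the $\wt L_{n,\alpha}$ are monic, $\wt{L}_{n_i,\alpha_i}(x)\,(s_i+x)^{-n_i}\to 1$ as $x\to\infty$, and Cauchy--Schwarz ($|\rho_{\bm{n}}|\le 1$) together with the dominating series \eqref{domin} justifies passing to the limit, so that $\sum_{\bm{n}}\rho_{\bm{n}}\prod_i(-s_i)^{n_i}/n_i!$ is a pointwise limit of Laplace transforms which is continuous at the origin; by the continuity theorem it is the Laplace transform of a law on $\R_+^d$ whose mixed moments are the $\rho_{\bm{n}}$, and the bound $|\rho_{\bm{n}}|\le 1$ forces the support into $[0,1]^d$. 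The remaining step (reconstructing the law of the A.2 output from this $\bm{Z}$ via the $L^2$ expansion, as you do) is fine, but until you either adopt this rescaling-and-limit argument or genuinely carry out one of your two sketched routes, the necessity half is not proved.
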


\begin{proof}
Conditioning on $\bm{B}=(b_1,\ldots,b_d)$ the distribution is the same one would obtain by running $d$ independent copies of algorithm A.1, each $i$-th copy being initialised by $\alpha_i,b_i$, $i=1,\ldots,d.$ Then from Lemma 1
\begin{eqnarray*}\Ex{\wt{L}_
{\bm{n},\bm{\alpha}}\left(\bm{X}\right)\wt{L}_
{\bm{m},\bm{\alpha}}\left(\bm{Y}\right)}&=&\Ex{\Ex{\wt{L}_
{\bm{n},\bm{\alpha}}\left(\bm{X}\right)\wt{L}_
{\bm{m},\bm{\alpha}}\left(\bm{Y}\right)\mid \bm{B}}}\\
&=&\Ex{\prod_{i=1}^dc_{n_i,\alpha_i}\delta_{n_im_i}\left(\frac{B_i}{1+B_i}\right)^{n_i}}\\
&=&c_{\bm{n},\bm{\alpha}}\delta_{\bm{n}\bm{m}}\Ex{\prod_{i=1}^dZ_i^{n_i}}
\end{eqnarray*}
where $Z_i:=B_i/(1+B_i),\ i=1,\ldots,d$ and the sufficiency part follows.
\\
 For the necessity, we use an approach similar to one used in \cite{GM78} to construct bivariate Poisson vectors. Let us suppose the pair $(\bm{X},\bm{Y})$ have canonical correlation sequence $\rho(\bm{\alpha})=\{\rho_{\bm{n}}(\bm{\alpha})\}$. Let $f\in L^2(\times_{i=1}^d \Gamma_{\alpha_i,1})$ and set $$\hat{f}(\bm{n}):=\E_{\bm{\alpha}}\left[f(\bm{X})\wt{L}_{\bm{n},\bm{\alpha}}(\bm{X}) \right].$$ From (\ref{eq:cexp}),
 $$\Ex{f(\bm{Y})\mid \bm{X}=\bm{x}}=\sum_{\bm{n}\in\Z_+^d}\frac{\hat{f}(\bm{n})}{c_{\bm{n},\bm{\alpha}}}\rho_{\bm{n}}(\bm{\alpha})\wt{L}_{\bm{n},\bm{\alpha}}(\bm{x}).$$
Then
 %Introduce a simple function $f_{\Ac}=\sum_{i=1}^d s_i\,\indic_{A_i}$ corresponding to a collection $\Ac=\{A_1,\ldots,A_d\}$ of pairwise disjoint sets in $\Xcr$. Correspondingly, define
%%\[
%%\phi_{\rho,\mu}(f_\Ac):=\E\left[\edr^{-\wt\mu_2(f_{\Ac})}\,\mid\, \wt\mu_1=\mu \,
%%\right]
%%\]
%%where $\rho:=\{\rho_{\bm{n}}:\: \bm{n}\in\Z_+^d\}$ is the sequence of canonical correlations we wish to determine and show it coincides with \eqref{eq:moment}.
%Set $x_i=\mu(A_i)$ and $\alpha_i=cP_0(A_i)$, for $i=1,\ldots,d.$ From (\ref{Laglap})
%one finds out that,
for every $\bm{s}\in\R_+^d,$
 \begin{align}
\Ex{\edr^{-\langle\bm{s},\bm{Y}\rangle}\mid \bm{X}=\bm{x}}
&=\sum_{\bm{n}\in\Z_+^d}
\rho_{\bm{n}}(\bm{\alpha})\:\frac{1}{c_{\bm{n},\bm{\alpha}}}
\:\prod_{i=1}^d\wt{L}_{n_i,\alpha_i}(x_i)\,
\psi_{n_i,\alpha_i}(s_i)\notag\\
%&=
%\sum_{\bm{n}\in\Z_+^d}
%\rho_{\bm{n}}(\Ac)
%\left[\prod_{i=1}^{d}(\alpha_i)_{(n_i)}n_i!\right]^{-1} \left[\prod_{i=1}^d\wt{L}^{\alpha_i}_{n_i}(x_i)\right]
%\prod_{i=1}^{d}(\alpha_i)_{(n_i)}\left(\frac{-s_i}{1+s_i}\right)^{n_i}(1+s_i)^{-\alpha_i}\\
&=\left\{\prod_{i=1}^d(1+s_i)^{-\alpha_i}\right\}\:
\sum_{\bm{n}\in\Z_+^d}\rho_{\bm{n}}(\bm{\alpha}) \prod_{i=1}^d\frac{\wt{L}_{n_i,\alpha_i}(x_i)}{n_i!}
\,\frac{(-s_i)^{n_i}}{(s_i+1)^{n_i}}.
\label{djointl}\end{align}
Note that, replacing each $\rho_{\bm{n}}(\bm{\alpha})$ with 1, the series becomes
\begin{eqnarray}\sum_{\bm{n}\in\Z_+^d}\prod_{i=1}^d\frac{\wt{L}_{n_i,\alpha_i}(x_i)}{n_i!}
\,\frac{(-s_i)^{n_i}}{(s_i+1)^{n_i}}&=&\prod_{i=1}^dG\left(\frac{-s_i}{s_i+1},x_i\right)\notag\\
&=&
\prod_{i=1}^d\left(\frac{1}{1+s_i}\right)^{-\alpha_i}\edr^{-x_is_i},\label{domin}\end{eqnarray}
where $G(r,x)$ is the generating function (\ref{Laggf}). Since
\[
\rho_{\bm{n}}(\bm{\alpha})= \prod_{i=1}^d \frac{1}{c_{n_i,\alpha_i}}\:
\E\left[\prod_{i=1}^d{\wt{L}_{n_i,\alpha_i}(X_i)}{\wt{L}_{n_i,\alpha_i}(Y_i)}\right]
\]
by the Cauchy-Schwarz inequality
\[
|\rho_{\bm{n}}(\bm{\alpha})|^2\leq\prod_{i=1}^d \frac{1}{c_{n_i,\alpha_i}^2}\:\E\left[\wt{L}_{n_i,\alpha_i}^2(X_i)\right]\:
\E\left[\wt{L}^2_{n_i,\alpha_i}(Y_i)\right]=1.
\]
Then $0\leq\rho_{\bm{n}}(\bm{\alpha})\leq 1.$ Hence (\ref{domin}) implies, in particular, that the sum in (\ref{djointl}) converges absolutely.
Now, for every $x>0,$
\begin{multline*}
\E\left[\edr^{-\sum_{i=1}^d s_i\frac{Y_i}{X_i}}\,\mid\, X_i=x, \: i=1,\ldots,d\,\right]=\\[7pt] \prod_{i=1}^d\left(1+\frac{s_i}{x}\right)^{-\alpha_i}\:
\sum_{\bm{n}\in\Z_+^d}\rho_{\bm{n}}(\bm{\alpha})
\prod_{i=1}^d\frac{\wt{L}_{n_i,\alpha_i}(x)}{n_i!}\:
\frac{(-s_i)^{n_i}}{(s_i+x)^{n_i}}=:\phi_{\rho,{x}}(\bm{s}).
 \end{multline*}
 The limit of $\phi_{\rho,{x}}(\bm{s})$, as $x\to\infty$, is still the Laplace transform of a $d$-dimensional probability distribution, again thanks to the boundedness of the canonical correlation coefficients. In particular,
\[
\phi_\rho(\bm{s}):=\lim_{x\to\infty}\phi_{\rho,{x}}(\bm{s})
=\sum_{\bm{n}\in\Z_+^d}\rho_{\bm{n}}(\bm{\alpha})\:\prod_{i=1}^d\frac{(-s_i)^{n_i}}{n_i!}
\]
is continuous at the origin. Thus $\{\rho_{\bm{n}}(\bm{\alpha}):\bm{n}\in\Z_+^d\}$ has the interpretation as the moment sequence of a random variable $\bm{Z}\in\R_+^d.$ Since one further has $\rho_{\bm{n}}(\bm{\alpha})\leq 1$, it is the moment sequence of a random variable in $[0,1]^d.$\\
{Finally, it remains to prove that every canonical correlation sequence corresponds to an algorithm of the type A.2. Let $f$ be any function on $\R_+^d$ with finite $\Gamma_{\bm{\alpha},1}$-variance. Then
\begin{align*}\Ex{f(\bm{Y})\mid \bm{X}=\bm{x}}&=\sum_{\bm{n}\in\Z_+^d}\frac{\hat{f}(\bm{n})}{c_{\bm{n},\bm{\alpha}}}\rho_{\bm{n}}(\bm{\alpha})\wt{L}_{\bm{n},\bm{\alpha}}(\bm{x})\\
&
=\sum_{\bm{n}\in\Z_+^d}
\frac{\hat{f}(\bm{n})}{c_{\bm{n},\bm{\alpha}}}\Ex{\bm{Z}^{\bm{n}}}\wt{L}_{\bm{n},\bm{\alpha}}
(\bm{x})\\
&=\Ex{\sum_{\bm{n}\in\Z_+^d}\frac{\hat{f}(\bm{n})}{c_{\bm{n},\bm{\alpha}}}{\bm{Z}^{\bm{n}}}
\wt{L}_{\bm{n},\bm{\alpha}}(\bm{x})},
\end{align*}
where $\bm{Z}^{\bm{n}}:=\prod_{i=1}^d Z_i^{n_i}.$ The sums can be interchanged because $\sum_{\bm{n}}{c_{\bm{n},\bm{\alpha}}}^{-1}(\hat{f}(\bm{n}))^2<\infty$ and both $|\rho_{\bm{n}}|\leq 1$ and $|\bm{Z}^{\bm{n}}|\leq 1,$ thus all the sums converge. The inner series can therefore be interpreted as a version of the conditional expectation
$$\Ex{f(\bm{Y})\mid \bm{X}=\bm{x},\bm{Z}=\bm{z}}$$ where $(\bm{X},\bm{Y})$ is the pair generated by step (iii) of Algorithm A.2, conditional on the realisation $\bm{B}=\left(z_i/(1-z_i):i=1,\ldots,d\right).$ The proof is thus complete. }
\end{proof}

In Algorithm A.2, given $\bm{X}=\bm{x}$ and $\bm{B}=\bm{b}$, the random variables $Y_1,\ldots,Y_d$ are independent and the distribution of $Y_j$ is a mixture of gamma distributions. Consider the particular case whereby, in Algorithm A.2, $P^*$ is degenerate on $\R_+$ so that with probability 1 one has $\bm{B}=(b,\ldots,b)\in\R_+^d$. Since $d$ independent Poisson random variables, conditioned to their total sum, form a multinomial vector, then an algorithm for simulating realisations of $\bm{Y}$ conditional on $\bm{X}=\bm{x}$ and $\bm{B}=(b,\ldots,b)$ is
\medskip

\textsc{Algorithm A.3.}
\begin{itemize}\addtolength{\itemsep}{2.5pt}
\item[\texttt{(0)}] Initialise by fixing $b\in\R_+,\bm{x}\in\R_+^d,\ \bm{\alpha}\in\R_+^d$.
\item[\texttt{(i)}] Sample $N=n$ from a Poisson distribution with parameter $b\,|\bm{x}|$.
\item[\texttt{(ii)}] Sample $N_1=n_1,\ldots,N_d=n_d$ from a multinomial distribution with parameter vector $(n;\,x_1/|\bm{x}|,\,\ldots,x_d/|\bm{x}|)$
\item[\texttt{(iii)}] Sample $\bm{Y}$ from $\Gamma_{\bm{\alpha}+\bm{n},\,(1+b)^{-1}}=\times_{i=1}^d\Gamma_{\alpha_i+n_i,(1+b)^{-1}}$
%\[
%(c+n)\,\frac{\edr^{-\frac{s}{\beta}}}{s}\,\ddr s\:P^*_n(\ddr x)\,\indic_{\R_+\times\Xc}(s,x)
%\]
%where $P^*_n(\ddr x)=[cP_0(\ddr x)+n\sum_{i=1}^n\delta_{x_i}(\ddr x)]/(c+n).$
\end{itemize}
\medskip

\begin{exe}\label{sec:dCSBI}
{\rm  ($d$-dimensional CSBI). %\subsubsection*{Example: $d$-dimensional CSBI}}  
 Consider a $\R_+^d$-valued process $(\bm{X}_t:t\geq 0)=((X_{1,t},,\ldots,X_{d,t}):t\geq 0)$ whose coordinates $(X_{i,t}:t\geq 0)$ $(i=1,\ldots,d)$ are independent continuous-state branching processes with immigration with stationary gamma$(\alpha_i,1)$ distribution, respectively. The generator of each coordinate is of the form \eqref{1csbi}. Then the generator of the vector is the sum of the marginal generators and the transition function of $(\bm{X}_t)$ is simply derived as the product of the individual coordinates' transition functions:
    \begin{eqnarray}
      p^{\alpha_1,\ldots,\alpha_d}_t(\bm x, \ddr\bm y)&=&\prod_{i=1}^d p^{\alpha_i}_t(x_i,\ddr y_i)\notag\\
                                                      &=&\sum_{n_1}\cdots\sum_{n_d} e^{- n_i /2}\wt{L}_{n_i,\alpha_i}(x_i)\wt{L}_{n_i,\alpha_i}(y_i)\Gamma_{\alpha_i,1}(\ddr y_i)\notag\\
                                                      &=&\sum_{\bm{n}}e^{-|\bm{n}| t/2}\wt{L}_{\bm{n}.\bm{\alpha}}(\bm{x})\wt{L}_{\bm{n}.\bm{\alpha}}(\bm{y})\Gamma_{\bm{\alpha},1}(\ddr \bm{y}).\label{dCSBI_tf}
    \end{eqnarray}
    Then, for each $t$, the process has canonical correlations of the form $\rho_{n_1,\ldots,n_d}(t)=e^{-|\bm{n}| t/2}$. Therefore it is easy to see that an algorithm for sampling from the transition function \eqref{dCSBI_tf} is given by Algorithm A.3 for $b=z(1-z)^{-1}$ and $z=e^{-t/2}$.\\
    This class has an important infinite-dimensional extension, the
    $\Gamma$-Dawson-Watanabe processes mentioned in the
    Introduction. It will be studied later in Sections 5.1 and play an
    important role in Section \ref{sec:markovcc}.}
  \end{exe}
%\textcolor{blue}{
%\todo{The following remark may or may not be added}\\
%\textbf{Remark.} 
%Theorem \ref{thm:dprodcc} established that the family of all canonical correlation sequences for pairs of $d$-dimensional random vectors, each marginally with independent gamma coordinates with parameter $\bm\alpha=(\alpha_1,\ldots, \alpha_d)$, forms a convex family whose extreme points are all monomials $\bm z^{\bm n}:=\prod_1^d z_i^{n_i}$ in the points $\bm z=(z_1,\ldots,z_d)\in[0,1]^d$, independently of the choice of $\bm\alpha=(\alpha_1,\ldots, \alpha_d).$. Now fix one such extreme point $\bm z$
%and suppose $(\bm X, \bm Y)$ is a Gamma$(\bm\alpha)$ pair of vectors with \lq\lq extreme\rq\rq canonical correlations $\{\bm z^{\bm n}\}$. It is easy to see that, if one sums two of more coordinates in the vector $\bm X$ and sums the corresponding coordinates of the vector $\bm Y$, the resulting pair of lower-dimensional vector $(\bm X^\prime,\bm Y^\prime)$, say, is still in canonical correlation, but no longer in the extreme family. Indeed,...}

\section{Dependent gamma CRMs}
Let us now move on to characterizing dependent vectors of gamma CRMs that are in canonical correlation. Since we are now dealing with infinite--dimensional objects, we need to make precise what we mean by a vector of random measures in canonical correlation. To simplify notation, for any collection $\Ac=\{A_1,\ldots,A_d\}$ of pariwise disjoint measurable subsets of $\Xc$ we set $$\bm{\wt{\mu}}_{j,\Ac}:=(\wt{\mu}_j(A_1), \ldots, \wt{\mu}_j(A_d))$$for each $j=1,2$. Denote by $\Xcr^*$ the set whose elements are all the finite collections of disjoint Borel sets of $X,$
and set $\Z_+^*=\bigcup_{d\geq 1}\Z_+^d.$ Throughout this and the next Sections the symbol $\Ac$ will be used for a generic element of $\Xcr^*$ and by $d(\Ac)$ the cardinality of $\Ac$.
\begin{defi}\label{def:ccrm}
A pair $(\wt\mu_1,\wt\mu_2)$ of random measures, with $\wt\mu_1\stackrel{d}{=}\wt\mu_2$, is in canonical correlation if $\bm{\wt{\mu}}_{1,\Ac}$ and $\bm{\wt{\mu}}_{2,\Ac}$ are canonically correlated, for every  $\Ac\in \Xcr^*$. The corresponding canonical correlation sequences in \eqref{ccdef1} shall be denoted by $\rho(\Ac):=\{\rho_{\bm{n}}(\Ac):\: \bm{n}\in\Z_+^{d(\Ac)}\}$. We denote the collection of all such sequences $\{\rho_{\bm{n}}(\Ac):\: \bm{n}\in\Z_+^{d(\Ac)},\Ac\in\Xcr^*\}$ by $\rho$.
%such that the vector $\left(\bm{\wt{\mu}}_{1,\Ac},\bm{\wt{\mu}}_{2,\Ac}\right)$, where $\bm{\wt{\mu}}_{j,\Ac}:=\left(\wt{\mu}_j(A_i):i=1,\ldots,d\right)$, $j=1,2,$ has canonical correlation sequence $$\rho_{\bm{n}}(\Ac),\ \ \ \ \bm{n}\in\Z_+^d.$$
\end{defi}

Notice that the finite-dimensional distributions of $(\wt\mu_1,\wt\mu_2)$, for non-disjoint sets, can always be expressed in terms of (polynomials of) finite-dimensional distributions on disjoint sets, therefore, by linear extension, Definition \ref{def:ccrm} fully describes the dependence in $(\wt\mu_1,\wt\mu_2)$. For pairs of Gamma$(c,P_0)$ CRMs, the coefficients $\rho_{\bm{n}}(\Ac)$ will be expectations of products of polynomials $\wt{L}_{n,cP_0(A_i)}$ $(i=1,\ldots,d(\Ac))$. Furthermore, notice that Definition \ref{def:ccrm} involves any choice of disjoint Borel sets $\mathcal{A}=\{A_1,\ldots,A_d\}$ but it is sufficient to check it only for those collections such that $P_0(A_i)>0,\ i=1,\ldots,d.$ Indeed, if one of the sets, $A_i$ say, has $P_0$ null measure, then $\wt{L}_{n,cP_0(A_i)}=\delta_{n,0}$ and therefore
$$
\rho_{n_1,\ldots,n_d}(A_1,\ldots,A_d)=\rho_{n_1,\ldots,n_{i-1},n_{i+1},\ldots,n_d}(A_1,\ldots,A_{i-1},A_{i+1},\ldots,A_d).
$$

%If $(\wt\mu_1,\wt\mu_2)$  is completely random, namely $(\wt\mu_1(A),\wt\mu_2(A))$ and $(\wt\mu_1(B),\wt\mu_2(B))$ are independent vectors for any pair of disjoint sets $A$ and $B$ in $\Xcr$, then \eqref{eq:corr_crm} is equivalent to condition (iii) of Proposition \ref{pr:equivcc} and it is sufficient to characterize the \emph{conditional moment measures}
%of any degree $|\bm{n}|$ of $\wt\mu_2$, given $\wt\mu_1$, in terms of polynomials of the same order in $\wt\mu_1.$

Of course, if $(\wt \mu_1,\wt \mu_2)$ is a pair of gamma measures in canonical correlation, then Theorem \ref{thm:dprodcc} indicates that, for any $d$ and $\Ac=(A_1,,\ldots,A_d)$ the canonical correlation coefficients $\rho(\Ac)$ of $(\wt\mu_1(\Ac),\wt\mu_2(\Ac))$ must be the moment sequence of a random vector $(Z_{A_1},\ldots,Z_{A_d})\in[0,1]^d.$ The first result shows an important identity in distribution among all such $Z_A,$ as $A$ varies in $\Xcr,$ induced by the Kolmogorov consistency property of $(\wt \mu_1,\wt \mu_2)$. This will pave the way for the main Theorem \ref{thm:main} of this Section.

\begin{lem}
\label{l:zadditive}
Let $(\wt\mu_1,\wt\mu_2)$ be a vector of gamma CRMs with parameter measure $cP_0$. $(\wt\mu_1,\wt\mu_2)$ is in canonical correlation if and only if there exists a sequence of $[0,1]$-valued random variables $\{Z_A:A\in\Xcr\}$ such that for every $d\ge 1$ and collection $\Ac=\{A_1,\ldots,A_d\}$ of pairwise disjoint sets in $\Xcr,$ the finite-dimensional vectors $\bm{\wt{\mu}}_{1,\Ac}$ and $\bm{\wt{\mu}}_{2,\Ac}$ are canonically correlated via the sequence of moments
\begin{equation}
\rho_{\bm{n}}(\Ac)=\E\left[\prod_{i=1}^d Z_{A_i}^{n_i}\right]
\label{eq:moment}
\end{equation}
Moreover, for any disjoint pair $A,B\in\Xcr$,
\begin{equation}
Z_{A\cup B}\overset{d}{=}\ep_{A,B}Z_A+(1-\ep_{A,B})Z_B
\label{eq:zi}
\end{equation}
where $\ep_{A,B}$ is a $\mbox{beta}(cP_0(A),cP_0(B))$ random variable, independent from $\{Z_A,Z_B\}$.
\end{lem}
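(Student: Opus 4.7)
The plan is to separate the proof into (i) the equivalence between canonical correlations and the moment representation (\ref{eq:moment}), which will reduce to Theorem \ref{thm:dprodcc} plus a Kolmogorov consistency argument, and (ii) the mixing identity (\ref{eq:zi}), which is a direct algebraic computation once one invokes the addition formula for Laguerre polynomials. Both halves exploit the fact that, because $\wt\mu_j$ is a gamma CRM, disjoint arguments produce independent $\Gamma_{cP_0(\cdot),1}$ coordinates, so the $d$-dimensional product theory of Section 3 applies directly on each collection $\Ac$.

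For the backward direction of the equivalence, fix any disjoint $\Ac=\{A_1,\ldots,A_d\}$ with $P_0(A_i)>0$, set $B_i=Z_{A_i}/(1-Z_{A_i})$, and apply Theorem \ref{thm:dprodcc}: this immediately certifies $(\bm{\wt\mu}_{1,\Ac},\bm{\wt\mu}_{2,\Ac})$ as canonically correlated with coefficients (\ref{eq:moment}). For the forward direction, Theorem \ref{thm:dprodcc} applied to each individual $\Ac$ gives a $[0,1]^{d(\Ac)}$-valued random vector $\bm{Z}^{(\Ac)}$ realising the prescribed mixed moments. To patch these into a single process $\{Z_A:A\in\Xcr\}$ I would verify Kolmogorov consistency: whenever $\Ac'\subset\Ac$, setting the degrees of the indices outside $\Ac'$ to zero in (\ref{eq:moment}) returns $\rho_{\bm n}(\Ac')$, so the marginal mixed moments agree; since the Hausdorff moment problem on the compact cube $[0,1]^{d(\Ac')}$ is determinate, the marginal laws themselves agree, and Kolmogorov's extension theorem delivers the global family.

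For (\ref{eq:zi}), the key tool is the Laguerre addition formula, which follows at once by multiplying the generating functions in (\ref{Laggf}), namely $G_\alpha(r,x)G_\beta(r,y)=G_{\alpha+\beta}(r,x+y)$, to yield
\begin{equation*}
\wt L_{n,\alpha+\beta}(x+y)=\sum_{k=0}^n\binom{n}{k}\wt L_{k,\alpha}(x)\,\wt L_{n-k,\beta}(y).
\end{equation*}
Fix disjoint $A,B\in\Xcr$ with $\alpha=cP_0(A)$ and $\beta=cP_0(B)$, apply this to $\wt\mu_j(A\cup B)=\wt\mu_j(A)+\wt\mu_j(B)$ for $j=1,2$, and substitute into $\E[\wt L_{n,\alpha+\beta}(\wt\mu_1(A\cup B))\,\wt L_{m,\alpha+\beta}(\wt\mu_2(A\cup B))]$. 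The canonical correlation orthogonality for the bivariate collection $(A,B)$ leaves only the terms with matching degrees on each of $A$ and $B$; using $c_{n,\alpha}=n!(\alpha)_n$ and simplifying yields
\begin{equation*}
\E[Z_{A\cup B}^n]=\sum_{k=0}^n\binom{n}{k}\,\frac{(\alpha)_k(\beta)_{n-k}}{(\alpha+\beta)_n}\,\E[Z_A^k Z_B^{n-k}].
\end{equation*}
Recognising $(\alpha)_k(\beta)_{n-k}/(\alpha+\beta)_n=\E[\ep^k(1-\ep)^{n-k}]$ for $\ep\sim\mbox{beta}(\alpha,\beta)$ and invoking independence of $\ep$ from $(Z_A,Z_B)$, the right-hand side equals $\E[(\ep Z_A+(1-\ep)Z_B)^n]$. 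Both random variables lie in $[0,1]$, so matching all moments forces the distributional identity (\ref{eq:zi}).

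The combinatorial reduction and the moment matching with the Beta kernel are essentially bookkeeping once the Laguerre generating-function identity is noted; the main technical point is really the Kolmogorov consistency step, which however goes through smoothly because the moment problem on each compact cube is determinate.
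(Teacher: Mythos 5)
Your derivation of \eqref{eq:zi} is correct, and it is essentially the paper's own computation in different packaging: the paper reaches the same moment identity by specialising the joint Laplace expansion to $s_i=s_j$, $t_i=t_j$ and comparing the auxiliary variables $K_i=Z_iY_i$ and $K_{ij}=Z_{ij}(Y_i+Y_j)$, which after dividing by $(\alpha_i+\alpha_j)_n$ is exactly your binomial/beta-moment formula; your route via the generating-function identity $G_{\alpha}(r,x)G_{\beta}(r,y)=G_{\alpha+\beta}(r,x+y)$ from \eqref{Laggf} plus orthogonality of the two-set collection is a legitimate and arguably tidier way to get there, and the appeal to determinacy of the Hausdorff moment problem on $[0,1]$ is fine. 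Note also that this part of your argument only uses the marginal law of $Z_{A\cup B}$ (from the one-set collection) and the joint law of $(Z_A,Z_B)$ (from the two-set collection), so it does not actually depend on your global construction of the family. Likewise, invoking Theorem \ref{thm:dprodcc} collection-by-collection and checking sub-collection consistency by setting components of $\bm{n}$ to zero is the same starting point as the paper's necessity argument.

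There are, however, two concrete weak points. First, the Kolmogorov step does not apply as stated: you specify joint laws only for pairwise disjoint collections, while Kolmogorov's extension theorem requires a projectively consistent specification over \emph{all} finite subsets of the index set $\Xcr$, and the class of disjoint collections is not closed under unions of index sets (you have prescribed no joint law for, say, $\{A,A\cup B\}$ or $\{A,B,A\cup B\}$). To obtain a genuine family $\{Z_A:A\in\Xcr\}$ on one probability space you must extend the specification to overlapping sets (for instance through the generated partition, coupling $Z_{A\cup B}$ to $(Z_A,Z_B)$ via an independent beta weight) and verify consistency of that enlarged system; that verification is precisely where \eqref{eq:zi} is needed, so the construction and \eqref{eq:zi} cannot be kept as separate as in your write-up. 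Second, and more substantively, your ``backward'' direction has no content beyond Definition \ref{def:ccrm} and never uses \eqref{eq:zi}, whereas the paper's sufficiency half proves a real statement: starting from an abstract family $\{Z_A\}$ satisfying \eqref{eq:zi}, it checks \eqref{eq:consist}, \eqref{eq:emptylim} and, crucially, the additivity property \eqref{eq:union}, so that by the Daley--Vere-Jones criteria the collection-wise Laplace transforms \eqref{eq:dexpansion} are the finite-dimensional distributions of an actual pair of gamma random measures; \eqref{eq:zi} is exactly the glue making the two-set specification on $\{A,B\}$ compatible with the one-set specification on $A\cup B$. That is the half of the lemma which is relied upon later (notably in Theorem \ref{thm:main}), and your proposal leaves it unproved.
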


\begin{proof}
Suppose $(\wt\mu_1,\wt\mu_2)$ is in canonical correlation  and introduce simple functions $f=\sum_{i=1}^d s_i\indic_{A_i}$ and $g=\sum_{i=1}^d t_i\indic_{A_i}$, where $A_1,\ldots,A_d$ are measurable disjoint subsets of $\Xc$. From Theorem~\ref{thm:dprodcc}, 
 %one has that for any collection $\Ac=\{A_1,\ldots,A_d\}$ of pairwise disjoint sets in $\Xcr$ and for any $d\ge 1$, the random vectors $(\wt\mu_i(A_1),\ldots,\wt\mu_i(A_d))$ are, for $i=1,2$, in canonical correlation with $\rho_{\bm{n}}(\Ac)=\E\left[\prod_{i=1}^d Z_i^{n_i}\right]$. Hence, if one defines
 one then has
\begin{align}
  \E\left[\edr^{-\wt\mu_1(f)-\wt\mu_2(g)}\right]
  &=
  \sum_{\bm{n}\in\Z_+^d}\rho_{\bm{n}}(\Ac)\:\prod_{i=1}^d\frac{\psi_{n_i,\alpha_i}(s_i)\:
  \psi_{n_i,\alpha_i}(t_i)}{(\alpha_i)_{n_i}\: n_i!}\notag\\[7pt]
  &=\left\{\prod_{i=1}^d \phi_{\alpha_i,1}(s_i)\,\phi_{\alpha_i,1}(t_i)\right\}\:
  \sum_{\bm{n}\in\Z_+^d}\rho_{\bm{n}}(\Ac)\:\prod_{i=1}^d \frac{(\alpha_i)_{n_i}}{n_i!}
  (\theta_i)^{n_i}\notag\\[7pt]
  &=\left\{\prod_{i=1}^d \phi_{\alpha_i,1}(s_i)\,\phi_{\alpha_i,1}(t_i)\right\}\:
  \E\left[\prod_{i=1}^d\sum_{n_i=0}^\infty \frac{(Y_iZ_{i})^{n_i}}{n_i!}\:\theta_i^{n_i}\right]
\label{eq:dexpansion}\end{align}
for some random vector $(Z_1,\ldots,Z_d)\in[0,1]^d,$ where $\theta_i=s_it_i/[(1+s_i)(1+t_i)]$, for each $i=1,\ldots,d$, and the $Y_i$s are independent random variables with respective distributions $\Gamma_{\alpha_i,1}$, also independent of the $Z_i$s. The series expansion in the expected value above leads to
\begin{equation}
  \label{eq:rand_meas}
  \E\left[\edr^{-\wt\mu_1(f)-\wt\mu_2(g)}\right]=\left\{\prod_{i=1}^d \phi_{\alpha_i,1}(s_i)\,\phi_{\alpha_i,1}(t_i)\right\}\:\int_{(\R_+)^d}\edr^{\langle \bm{\theta},\bm{v}\rangle}\: p_{\Ac}(\ddr\bm{v})
\end{equation}
where $p_{\Ac}$ is a probability distribution on $(\R_+)^d$ such that
\begin{equation}
\int_{(\R_+)^d}\: \bm{v}^{\bm{n}}\:p_{\Ac}(\ddr\bm{v})=
\E\left[\prod_{i=1}^d Z_i^{n_i}\right]\:
\prod_{i=1}^d (\alpha_i)_{n_i}
\label{eq:kimom_1}
\end{equation}
and $\bm{v}^{\bm{n}}=\prod_{i=1}^d v_i^{n_i}$. Now choose any two indices $i,j\in[d]=\{1,\ldots,d\}$ and set $s_i=s_j$ and $t_i=t_j$. The very same argument leading to (\ref{eq:rand_meas}) allows us to identify new random variables $Z_{ij}$ and $(Z^\prime_k:k\neq i,j)$, say, via
\begin{multline}
\label{eq:kimom_2}
\int_{(\R_+)^{d-1}}\: \bm{v}^{\bm{n}}\:p_{\wt \Ac}(\ddr\bm{v})
=\E\left[\left(Z_{ij}^{n_i+n_j}\right)\prod_{k\neq i,j}{Z^\prime}_k^{n_k}\right]\\
\times\:(\alpha_i+\alpha_j)_{n_i+n_j}\left[\prod_{k\neq i,j}(\alpha_k)_{n_k}\right].
\end{multline}
Note that (\ref{eq:kimom_1}) and (\ref{eq:kimom_2}) need to be identical (by construction and by (\ref{eq:rand_meas}) with $s_i=s_j,t_i=t_j$). In particular, on one hand the vector $(Z^\prime_k:k\in[d],k\neq i,j)$ must be equal in distribution to the vector $(Z_k:k\in[d], k\neq i,j)$ (setting $n_i=n_j=0$ in \eqref{eq:kimom_1} and \eqref{eq:kimom_2} shows that all their moments coincide). On the other hand, setting $K_i=Z_iY_i$ and $K_{ij}=Z_{ij}(Y_i+Y_j)$ we thus see that 
$$\E[K_{ij}^{n}]=\E[(K_i+K_j)^n]$$
But since $Y_i$ has distribution $\Gamma_{\alpha_i,1}$ and is independent of $(Z_1,\ldots,Z_d)$ for every $i$, this also implies
that
\begin{eqnarray}
\E[Z_{ij}^{n}]&=&\frac{\E[K_{ij}^{n}]}{(\alpha_i+\alpha_j)_{(n)}}=\frac{\E[(K_i+K_j)^n]}{(\alpha_i+\alpha_j)_{(n)}}\notag\\
&=&\sum_{k=0}^n{n\choose j}\frac{(\alpha_i)_{(j)}(\alpha_j)_{(n-k)}}{(\alpha_i+\alpha_j)_{(n)}}\E[Z_i^{n_i}Z_j^{n_j}]\notag\\
&=&\E[(\epsilon_{i,j}Z_i+(1-\epsilon_{i,j})Z_j)^n]\notag
\end{eqnarray}
where $\epsilon_{i,j}\sim \mbox{Beta}(\alpha_i,\alpha_j),$ is independent of $(Z_i,Z_j)$. Equivalently,
$$Z_{ij}\overset d = \epsilon_{i,j}Z_i+(1-\epsilon_{i,j})Z_j.$$ Now we can identify, in the previous construction, $Z_{A_i}:=Z_i, i=1,\ldots,d,$ and $Z_{A_i\cup A_j}:=Z_{ij}$ and see that the above  distributional equation coincides with (\ref {eq:zi}) for $A=A_i,B=A_j$. But since the choice of $\Ac, i,j$ was arbitrary, we have proved that (\ref {eq:zi}) holds for all $A,B\in\Xcr$ and, by induction, we have proved the necessity part of the Lemma.\\
To prove sufficiency, assume that there exists a set of $[0,1]$-valued random variables $\{Z_A:A\in\Xcr\}$ satisfying (\ref {eq:zi}). For every $d$ and disjoint sets $\Ac=\{A_1,,\ldots, A_d\},$ the mixed moments $\Ex{Z_{A_1}^{n_1}\cdots Z_{A_d}^{n_d}}$ identify a sequence of canonical correlation coefficients for a pair $(\bm{X}(\Ac),\bm Y(\Ac))$ of random vectors with identical marginal distribution $\times_{i=1}^d \Gamma_{\alpha_i,1}$, where $\alpha_i=c P_0(A_i), i=1,\ldots,d.$ The Laplace transform of $(\bm{X}(\Ac),\bm Y(\Ac))$ is of the form \eqref{eq:dexpansion}, for any pair of functions $f=\sum_1^d s_i X_i(\Ac)$ and $g=\sum_1^d t_i Y_i(\Ac)$, where $Z_i=Z_{A_i}$, $\bm{X}(\Ac)=(X_1(\Ac),\ldots,X_d(\Ac))$ and $\bm{Y}(\Ac)=(Y_1(\Ac),\ldots,Y_d(\Ac))$.
Denote by $\phi_{\Ac}(\bm{s},\bm{t})$ such a Laplace transform. We now show that the property  (\ref {eq:zi}) makes it possible for $\phi_{\Ac}$ to identify, as $\Ac$ varies, the finite-dimensional distributions of a pair of random measures with identical $\Gamma_{cP_0}$-marginals. First of all, if $\Ac_-=\Ac\setminus A_d$ one easily notices that
%\begin{align*}
\begin{equation}
\phi_{\Ac}(s_1,\ldots,s_{d-1},0;t_1,\ldots,t_{d-1},0)
=\phi_{\Ac_-}(s_1,\ldots,s_{d-1};t_1,\ldots,t_{d-1}).
\label{eq:consist}
\end{equation}
%&=\left\{\prod_{i=1}^{d-1} \phi_{\alpha_i,1}(s_i)\,\phi_{\alpha_i,1}(t_i)\right\}\phi_{\alpha_d,1}(0)\,\phi_{\alpha_d,1}(0)\notag\%\
%&\times& \E\left[\prod_{i=1}^{d-1}\sum_{n_i=0}^\infty \frac{(Y_iZ_{A_i})^{n_i}}{n_i!}\:\theta_i^{n_i}\left(\sum_{n_d=0}^\infty\frac{(Y_dZ_{A_d})^{n_d}}{n_d!}\:\theta_d^{n_d}\right)_{\theta_d=0}\right]\notag\\
%&=&\phi_{d, A_1,\ldots,A_{d}}(s_1,\ldots,s_{d-1},0;t_1,\ldots,t_{d-1}, 0)\label{consist}
%\end{eqnarray}
%because $\phi_\alpha(0)=1$ and all powers $\theta_d^{n_d}$ are zero for $\theta_d=0$ except for $n_d=0$.
Furthermore, %Also, continuity holds, for 
if $A_i\downarrow \emptyset$, for some $i=1,\ldots, d$, then $P_0(A_i)\to 0$ %, hence $\phi_{\alpha_i}\to 1$, $Y_i(\Ac)\to 0$ (a.s.) 
and
%\begin{eqnarray}
%&& \phi_{d, A_1,\ldots,A_{d}}(s_1,\ldots,s_{d};t_1,\ldots,t_{d})\notag\\
% &  &\underset{A_i\downarrow\emptyset}\to\phi_{d-1, A_1,\ldots,A_{i-1}, A_{i+1}\ldots,A_{d}}(s_1,\ldots,s_{i-1}, s_{i+1},\ldots , s_{d};t_1,\ldots,t_{i-1},t_{i+1}, \ldots,t_{d}).\label{emptylim}\end{eqnarray}
\begin{equation}
\lim_{A_i\downarrow \emptyset}
\phi_{\Ac}(\bm{s};\bm{t})
=\phi_{\Ac_{-i}}(\bm{s}_{-i,};\bm{t}_{-i})
\label{eq:emptylim}
\end{equation}
where $\Ac_{-i}=\Ac\setminus A_{i}$, while $\bm{s}_{-i,}$ and $\bm{t}_{-i}$ and the $\bm{s}$ and $\bm{t}$ vectors with the $i$--th component removed. In particular, for $d=1,$
$\phi_{1,\alpha_1}(s;t)\to 1$ as $A_1\downarrow\emptyset$. Finally, when $d=2$, note that
%the property \eqref{eq:zi} leads to additivity:
% \begin{equation}\phi_{1,A_1\cup A_2}(s;t)=\phi_{2,A_1, A_2}(s,s;t,t), \label{union}
% \end{equation}
% for any $A_1,A_2$ disjoint and a similar property holds for $\phi_{d,\Ac}$ whenever two or more sets of $\Ac$ are obtained as unions of two or more sets of some $\Ac^*\in\Xcr^{d+k}$ $(k=1,2,\ldots)$.
%To see that \eqref{union} holds, consider the expansion of the term on the right-hand side:
\begin{equation}
  \begin{split}
    \phi_{\Ac}(s,s;t,t)
    &=
    \prod_{i=1}^2
    \phi_{\alpha_i,1}(s)\,\phi_{\alpha_i,1}(t)\:
    \E\left[\prod_{i=1}^2\sum_{n_i=0}^\infty \frac{(Y_iZ_{A_i})^{n_i}}{n_i!}\:\theta^{n_i}\right]\\[4pt]
    &=
    \phi_{\alpha_1+\alpha_2,1}(s)\,\phi_{\alpha_1+\alpha_2,1}(t) \\[4pt]
    &\quad\times\:
    \E\left[\sum_{n=0}^\infty \frac{(Y_1+Y_2)^n}{n!} \right.\\[4pt]
    &\qquad\times\:\left.\theta^n \sum_{k=0}^n {n\choose k} (XZ_{A_1})^k[(1-X)Z_{A_2}]^{n-k}\right], \label{add1lep}
  \end{split}
\end{equation}
%by additivity of the Gamma distribution,
 where $X:=Y_1/(Y_1+Y_2)$ is a Beta$(\alpha_1,\alpha_2)$ random variable, independent of $(Z_{A_1},Z_{A_2})$ and of $(Y_1+Y_2)$, the latter having distribution $\Gamma_{\alpha_1+\alpha_2,1}$. By \eqref{eq:zi}, the expectation on the right hand side of \eqref{add1lep} can then be written as
$$\E\left[\sum_{n=0}^\infty \frac{(Y_{12}Z_{A_1\cup A_2})^n}{n!} \theta^n  \right]$$
with $Y_{12}\overset d = Y_1+Y_2$ and one may then conclude that
\begin{equation}
  \label{eq:union}
      \phi_{\Ac}(s,s;t,t)=\phi_{\{A_1\cup A_2\}}(s,t).
\end{equation}
The properties \eqref{eq:consist},\eqref{eq:emptylim} and \eqref{eq:union} effectively coincide with the necessary and sufficient conditions for $\phi_{\Ac}$ to determine the full set of finite-dimensional distributions of a uniquely defined random measure on $\Xc\times\Xc$ (see conditions 9.2.VI  Lemma 9.2.IX of \cite{daley}). Since all such finite-dimensional distributions correspond to gamma vectors in canonical correlations, this proves the sufficiency part of the claim. \end{proof}

\subsection{Random Dirichlet means.}
We first recall that a Dirichlet process with base measure $c P_0$, where $c>0$ and $P_0$ is some probability measure on $(\Xc\,Xcr)$, can be defined as the normalisation of a Gamma process $\wt\mu$ on $(\Xc,\Xcr)$ with parameters $(c,P_0)$, by its total mass $\wt\mu(\Xc)$. It will henceforth be denoted as $\wt p_{cP_0}$. A nice and surprising consequence of Lemma~\ref{l:zadditive}, is a connection between canonical correlations in \eqref{eq:moment} and linear functionals of a Dirichlet process.
 %This will provide a way for determining an expression of $\rho_{\bm{n}}$.
% Let $\wt\mu$ be the gamma CRM as in Theorem \ref{thm:main} (i), and set $\wt{p}(\cdot)=\wt{\mu}(\cdot)/\wt{\mu}(\Xc).$ The probability measure $\wt{p}$ is called a Dirichlet process with parameter $cP_0.$ By construction, the finite-dimensional distributions of $\wt{p},$ are Dirichlet distributions and do not depend on $\wt{\mu}(\Xc).$
If for any $d\ge 1$ the collection $\{A_1,\ldots,A_d\}$ is a measurable partition of $A\in\Xcr$, then \eqref{eq:zi} can be trivially extended to
\begin{equation}
Z_A\stackrel{d}{=}\sum_{i=1}^d \ep_i\:Z_{A_i}\qquad \mbox{with}\qquad \ep_i=\frac{\wt\mu(A_i)}{\wt\mu(A)}
\label{eq:epsilon}
\end{equation}
which entails that $Z_A\eqd \int f_d(x)\, \wt p_{H^*_A}(\ddr x)$, where $f_d(x)=\sum_{i=1}^d Z_{A_i}\,\mathds{1}_{A_i}(x)$ and $H_A^*(\,\cdot\,)= c P_0(\,\cdot\,\cap A)$. Since such a representation holds for every $d$, it suggests that, at least in the case in which all $Z_{A_i}$ are independent, taking the limit as $d\to\infty,$ $Z_A$ can be seen as a linear functional of a Dirichlet process, whose parameter measure is supported by $A$.  This is summarised by the following result.

\begin{thm}\label{thm:zeta}
Let $(\wt\mu_1,\wt\mu_2)$ be a pair of Gamma random measures, with parameter $(c,P_0)$, in canonical correlation 
%and consider the  
%corresponding collection of $[0,1]$--valued random variables $\{Z_A:\:A\in\Xcr\}$ satisfying \eqref{eq:zi}.
Then %joint increments $((\mu_1(A_i),\mu_2(A_i)):i=1,2,\ldots)$ are independent for any collection $(A_i:i=1,2,\ldots)$ of disjoint sets of $\Xc,$ 
$(\wt\mu_1,\wt\mu_2)$ has independent increments if and only if there exists a probability kernel $Q$ on $\Xc\times\mathcal{B}([0,1])$ such that, for every $A\in\Xcr$, the canonical correlation coefficients are moments $\rho_n(A)=\Ex{Z_A^n}$ where,
\begin{equation}
  \label{eq:Z_A}
  Z_A\overset d= \int_0^1 s \ \wt{p}_{A}(\ddr s),\ \ \ A\in\Xcr
   \end{equation}
with $\wt{p}_{A}$ a Dirichlet process with parameters $(cP_0(A),G_A)$  and
$$G_A(\ddr s):=\frac{\int_A  \ Q(x; \ddr s)P_0(dx)}{P_0(A)}. $$

%$$G(dx,dz)=cP_0(s)Q_s(dz)$$  %there exists a collection $\{\tau_{A}:A\in\Xcr\}$ of random measures on $[0,1]$ such that:

%$\tau_\varnothing=0$ (a.s.) and $\tau_{A\cup B}\overset d = \tau_{A}+\tau_{B}$
%for every $A,B\in\Xcr$ such that $A\cap B=\emptyset$;
%\item[{\rm (ii)}] for any $A,B\in\Xcr$ such that $A\cap B=\varnothing$, $\wt{p}_{\tau_A}$ and $\wt{p}_{\tau_B}$ are conditionally independent Dirichlet processes on $([0,1],\Bcr[0,1])$, given $\tau_A$ and $\tau_B$. [MAYBE NOT NECESSARY, ONLY NEED TO SAY THAT P IS CONDITIONALLY DP].
%\item[{\rm (iii)}] \textcolor{red}{If $A\cap B=\varnothing$, $\tilde p_{\tau_A}$ and $\tilde p_{\tau_B}$ are, conditional on $\tau_A$ and $\tau_B$, independent Dirichlet processes.}
\end{thm}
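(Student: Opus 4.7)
Plan. I would prove the equivalence in two directions. For sufficiency, I would realize the $Z_A$'s concretely as random labelings of a gamma CRM, making the independent-increments property transparent. For necessity, I would extract the kernel $Q$ from the abstract family $\{Z_A\}$ supplied by Lemma~\ref{l:zadditive} via iterated use of the additivity identity \eqref{eq:zi} together with Cifarelli--Regazzini uniqueness.

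\emph{Sufficiency.} Assume $Q$ is given. On an auxiliary space take a gamma CRM $\wt\nu$ with parameter $(c,P_0)$ and, independently, a field $\{\xi_x:x\in\Xc\}$ of mutually independent $[0,1]$-valued random variables with $\xi_x\sim Q(x;\cdot)$. Set $\Psi_A:=\wt\nu(A)^{-1}\int_A\xi_x\,\wt\nu(\ddr x)$. Since $\wt\nu|_A/\wt\nu(A)$ is a Dirichlet$(cP_0|_A)$ process on $A$ and the $\xi_x$'s label its atoms independently by $Q(x;\cdot)$, the stick-breaking representation yields a Dirichlet$(cP_0(A),G_A)$ pushforward on $[0,1]$; hence $\Psi_A \overset{d}{=} Z_A$. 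For disjoint $A,B$, $\Psi_A$ and $\Psi_B$ are functions of the independent pairs $(\wt\nu|_A,\xi|_A)$ and $(\wt\nu|_B,\xi|_B)$, so $\Psi_A\perp\Psi_B$; more generally, the $(\Psi_{A_i})_i$ are mutually independent for any disjoint family. Inserting the resulting factorized mixed moments into the series \eqref{eq:dexpansion} of Lemma~\ref{l:zadditive} makes the joint Laplace functional $\phi_{\Ac}(\bm s,\bm t)$ factor as a product over $i$, which is the independent-increments property of $(\wt\mu_1,\wt\mu_2)$.

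\emph{Necessity.} Suppose $(\wt\mu_1,\wt\mu_2)$ has independent increments. Lemma~\ref{l:zadditive} supplies $\{Z_A\}$ with additivity \eqref{eq:zi}. Matching the factorized Laplace functional against \eqref{eq:dexpansion} forces, for disjoint $A_1,\ldots,A_d$,
\[
\E\Bigl[\prod_i e^{\theta_iY_iZ_{A_i}}\Bigr]=\prod_i\E\bigl[e^{\theta_iY_iZ_{A_i}}\bigr],
\]
with $Y_i\sim\Gamma(cP_0(A_i),1)$ independent of the $Z_{A_i}$'s and of each other; integrating out the $Y_i$'s and expanding in power series in the $\theta_i$'s yields factorization of all mixed moments of $(Z_{A_i})$, hence (by $Z_{A_i}\in[0,1]$) mutual independence. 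Iterating \eqref{eq:zi} on a countable refining sequence of partitions $\pi_n$ of $\Xc$ and using this independence gives, for every $A\in\Xcr$ and every $n$,
\[
Z_A\overset{d}{=}\sum_{B\in\pi_n,\,B\subset A}\ep^{(n)}_B\,Z_B,
\]
with Dirichlet-distributed weights $(\ep^{(n)}_B)$ independent of the mutually independent $\{Z_B\}$. This displays $\mathcal{L}(Z_A)$ as a Dirichlet-mean law with a uniquely determined base probability $G_A$ on $[0,1]$ via injectivity of the Cifarelli--Regazzini correspondence $G\mapsto\mathcal{L}_{cP_0(A),G}$, and the additivity of \eqref{eq:zi} gives $P_0(A)G_A(\cdot)=\sum_{B\in\pi_n,\,B\subset A}P_0(B)G_B(\cdot)$. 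Thus $A\mapsto P_0(A)G_A(\cdot)$ is finitely additive and $P_0$-absolutely continuous, so a disintegration (vector-valued Radon--Nikodym) yields a probability kernel $Q(x;\cdot)$ with $P_0(A)G_A=\int_A Q(x;\cdot)\,P_0(\ddr x)$, completing the representation.

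The chief obstacle is showing that $\mathcal{L}(Z_A)$ actually lies in the image of the Cifarelli--Regazzini map (uniqueness of $G_A$ then follows from injectivity). Existence is established by verifying that $\mathcal{L}(Z_A)$ satisfies the characteristic Beta-weighted stability identity across arbitrary measurable partitions of $A$ with independent components---precisely \eqref{eq:zi} combined with the independence derived above. Once this is in place, the measurability of $x\mapsto Q(x;\cdot)$ and the disintegration step are standard.
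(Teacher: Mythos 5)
Your sufficiency argument and the first half of your necessity argument are sound and essentially follow the paper: the labelling construction $\Psi_A=\wt\nu(A)^{-1}\int_A\xi_x\,\wt\nu(\ddr x)$ is a correct (and more explicit) realisation of \eqref{eq:Z_A} satisfying \eqref{eq:epsilon} (only the countably many atoms of $\wt\nu$ need labels, so the uncountable field $\{\xi_x\}$ is harmless), and deducing mutual independence of the $Z_{A_i}$ from the factorised Laplace functional via \eqref{eq:dexpansion} and moments on $[0,1]^d$ is exactly what the paper does.

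The genuine gap is at the step you yourself call the chief obstacle. The iterated identity $Z_A\overset{d}{=}\sum_{B\in\pi_n,\,B\subset A}\ep^{(n)}_B Z_B$ does not ``display'' $\mathcal{L}(Z_A)$ as a Dirichlet-mean law: the summands $Z_B$ are the unknown variables themselves, with their own undetermined laws, not i.i.d.\ draws from a candidate base measure, so this is not the defining property of a Dirichlet mean, and there is no standard ``Beta-weighted stability identity'' characterising the image of the map $G\mapsto\mathcal{L}\bigl(\int_0^1 s\,\wt p_{(\theta,G)}(\ddr s)\bigr)$ that you can simply invoke; as written the verification is circular. What must be supplied --- and what the paper's proof actually does --- is a limiting argument that produces the base measure $G_A$: express the partition weights through a Dirichlet process $\tilde\pi_A$ on $A$ and its Sethuraman stick-breaking representation, so that $Z_A\overset{d}{=}\sum_{j\ge 1}\wt P_{A,j}\,U_{m,j}$ with $U_{m,j}=Z_{A_{E_{m,j}}}$ the value attached to the cell of the $m$-th partition containing the $j$-th atom $Y_j$; then (i) prove that $U_{m,1}$ converges in distribution as $m\to\infty$ (the paper does this through a moment recursion using $\wt P_{A,1}\sim\mbox{Beta}(1,cP_0(A))$ and the fact that the law of the remainder term does not depend on $m$), and (ii) use the diffuseness of $P_0$ to show that distinct $Y_j$'s eventually fall in distinct cells, so that the independence of the $Z_B$'s over disjoint sets makes the limiting labels $(U_j)$ i.i.d.\ with some common law $G_A$, independent of the weights. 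Only after this does $Z_A\overset{d}{=}\int_0^1 s\,\wt p_A(\ddr s)$ hold with $\wt p_A$ Dirichlet with parameters $(cP_0(A),G_A)$. Your closing steps (additivity of $A\mapsto cP_0(A)G_A(\cdot)$ from \eqref{eq:zi} and the disintegration yielding the kernel $Q$, with injectivity of the base-to-mean-law map giving uniqueness) coincide with the paper's and are fine once $G_A$ exists; it is the existence of $G_A$, the heart of the necessity proof, that your proposal asserts rather than proves.
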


\begin{proof}
One direction of the result is immediate. If the $Z_A$'s are independent, for any collection $\Ac$ of pairwise disjoint sets, and defined as in \eqref{eq:Z_A}, they satisfy \eqref{eq:epsilon} and identify the canonical correlations $\rho_{\bm{n}}(\Ac)$ of gamma CRM vector whose increments are independent.

The proof of necessity is trickier. In the following we denote $H:=cP_0$ and note that, if $(\wt\mu_1,\wt\mu_2)$ has independent increments, then the canonical correlation coefficients must be of the form $\rho_{n_1,n_2}(A,B)=\rho_{n_1}(A)\rho_{n_2}(B),$ for every pair $A$ and $B$ of disjoint sets in $\Xcr$. This, combined with \eqref{eq:moment},  implies that $Z_A$ and $Z_B$ are independent. It can be further shown that for every sequence $\{A_n:\: n\ge 1\}$ of pairwise disjoint sets in $\Xcr$, the random variables $\{Z_{A_i}:\: i\ge 1\}$ are mutually independent. Introduce, now, a sequence  $\Pi:=(\Pi_m)_{m\ge 1}$ of nested and measurable partitions of $A$ in such a way that $\Pi_m:=\{A_{\bm{\ep}}:\: \bm{\ep}\in\{0,1\}^m\}$ and, for any $\bm{\ep}\in\{0,1\}^m$, one has
$A_{\bm{\ep}}=A_{\bm{\ep}0}\cup A_{\bm{\ep}1}$. From \eqref{eq:epsilon} %, \eqref{eq:zi} and Proposition \ref{prp:DP} (i), 
one trivially deduces
\begin{equation}
  \label{eq:decomp}
  Z_A\stackrel{d}{=}\sum_{\bm{\ep}\in\{0,1\}^m} Z_{A_{\bm{\ep}}}\,
  \tilde \pi_A(A_{\bm{\ep}})
\end{equation}
where $\tilde \pi_A$ is a Dirichlet process with parameters $(cP_0(A),H_A)$ and $H_A(\cdot):=H(\,\cdot\,\cap A)/H(A)$. Moreover, all $Z_{A_{\bm{\ep}}}$ in the sum are independent. The stick--breaking representation of the Dirichlet process, as established in \cite{S94}, allows us to set 
%by Proposition \ref{prp:DP} (ii) we can take the series representation 
\[
\pi_A(A_{\bm{\ep}})=\sum_{j=1}^\infty \wt{P}_{A,j}\delta_{Y_j}(A_{\bm{\ep}})
\] 
where $(\wt{P}_{A,j})_{j\ge 1}$ is a so--called GEM$(H(A))$ sequence meaning that $(V_J)_{j\ge 1}$, with $V_j=\wt P_{A,j}/[1-\sum_{i=1}^{j-1}\wt P_{A,i}]$, is a sequence of iid random variables whose common distribution is Beta$(1,H(A))$.  Moreover, $(Y_j)_{j\ge 1}$ is a sequence of iid $\Xc$--valued random variables, independent of $(\wt{P}_{A,j})_{j\ge 1},$ with common distribution $H_A$. In view of this, one can rewrite (\ref{eq:decomp}) as follows
% simple functions $\zeta^{(m)}_A(y)=\sum_{\bm{\ep}\in\{0,1\}^m} Z_{A_{\bm{\ep}}}\delta_{y}(A_{\bm{\ep}})$ and note that for any $m\ge 1$
\begin{equation}
\label{eq:zseries}
Z_A\stackrel{d}{=}\sum_{j=1}^\infty \wt{P}_{A,j}\,U_{m,j},\ \ m=1,2,\ldots
\end{equation}
where 
$U_{m,j}:=\sum_{\bm{\ep}\in\{0,1\}^m} Z_{A_{\bm{\ep}}}\delta_{Y_j}(A_{\bm{\ep}})$  for any $m$ and $j$ in $\N$. Note that, for every $m$, the sequences $(U_{m,1},U_{m,2},\ldots)$ and $(U_{m,2},U_{m,3},\ldots)$ have the same distribution. Then we can rewrite \eqref{eq:zseries} as
\begin{equation}
Z_A\eqd \wt P_{A,1}U_{m,1}+(1-\wt P_{A,1})Z^{*}_{m,A},\ \ m=1,2,\ldots
\end{equation}
where, for every $m$, $\wt P_{A,1}$ is independent of $(U_{m,1},Z^{*}_{m,A})$ and $Z^{*}_{m,A}\overset d = Z_A.$ %If we prove that, as $m\to\infty$,  $(U_{m,1}$ becomes independent of $Z^{*}_{m,A})$, then by \eqref{eq:setmean} we obtain that $Z_A$ is a Dirichlet random mean.\\
Moreover, since for any $m$
%First of all, $U_{m,1}$ converges in distribution because, for every $m$, the moments of the sum on the right hand side are equal to the moments of $Z_A$ (independently of $m$), namely:
\begin{align*}
\mathds{E} Z_A^n&=
\sum_{j=0}^n{n\choose j}\Ex{P_{A,1}^j(1-\wt P_{A,1})^{n-j}}\Ex{U_{m,1}^j{Z^{*}_{m,A}}^{n-j}}\\
&=\sum_{j=0}^n{n\choose j}\frac{j!(\theta)_{(n-j)}}{(1+\theta)_{(n)}}
\:\mathds{E} U_{m,1}^j{Z^{*}_{m,A}} ^{n-j}
\end{align*}
and the moments of $Z^{*}_{m,A}$ do not depend on $m$, one has that $U_{m,1}$ converges in distribution to a random variable in $[0,1]$, as $m\to\infty$. 
%Now we need to show that, in the limit, $U_{m,1}$ becomes independent of $Z^{*}_{m,A}$. We will prove that, equivalently, as $m\to,infty$, $U_{m,1}$ becomes independent of $(U_{m,2},\ldots)$, that is, $(U_{m,j})$ is, as $m\to\infty$, an \emph{iid} sequence.
In view of , to conclude the proofSet
 $$E_{m,j}:=\sum_{\bm{\ep}\in\{0,1\}^{m}}\bm{\epsilon}\: \indic_{A_{\bm{\ep}}}(Y_j),\qquad
 j\geq 1.$$
For each $m,j\in\N,$ $E_{m,j}$ identifies the unique set in the partition $\Pi_m$ that contains $Y_j$, for every $j\geq 1$, namely $U_{m,j}=Z_{A_{E_{m,j}}}.$ 
Note that, if $P_0$ is diffuse, then
$$\Prob{E_{m,i}=E_{m,j}}=\Ex{\sum_{\ep\in\{0,1\}^m}\max\{\indic_{A_{\ep}}(Y_i), \indic_{A_{\ep}}(Y_j)\}}\to 0\ \ \ m\to\infty.$$
Thus, for every $C$ and $D$ in ${\cal B}([0,1])$, 
\begin{align*}
\lim_{m\to\infty} \Prob{U_{m,1}\in C, U_{m,j}\in D}&= \lim_{m\to\infty}\Prob{Z_{A_{E_{m,1}}}\in C, Z_{A_{E_{m,j}}}\in D\mid\ E_{m,1}\neq E_{m,j}}\\[4pt]
&=\lim_{m\to\infty}\Prob{Z_{A_{E_{m,1}}}\in C}\Prob{Z_{A_{E_{m,j}}}\in D}
\end{align*}
because  the $(Z_A, Z_B)$ are independent for any disjoint $A,B$ by assumption. The same can be seen by taking triplets, quadruplets, etc., of $JU_{m,i}$'s and the limiting sequence $(U_{j})$ is, thus, iid.  so that we can write, for every $A\in\Xcr$, %(\textcolor{red}{don't we need to assume $\tilde p_{A_i}$, $i=1,\ldots,d$, as mutually independent for any $d\ge 1$ and any collection of pairwise disjoint $A_i$'s?})
\begin{equation}
Z_A  \overset{d}=\sum_{j=1}^\infty \wt{P}_{A,j}\,U_{j}
\eqd \int_0^1 x\ \wt{p}_{A}(\ddr x)\label{eq:mean_z}
\end{equation}
where $(\tilde{P}_{A,j})_{j\ge 1}$ and $(U_{j})_{j\ge 1}$ are independent and $\wt{p}_{A}$ is a Dirichlet process on $[0,1]$ with parameter $(H(A), G_{A}),$ for some probability measure $G_A(\cdot)$, being the distribution of $U_1$.
The second distributional equation follows from the series representation of the Dirichlet process. 
If $B$ in $\Xcr$ is such that $A\cap B=\varnothing$, one then has tat $\tilde p_A$ and $\tilde p_B$ denote independent Dirichlet processes and by 
%then there exist sequences  $(\tilde{P}_{B,j})_{j\ge 1}$ and $(U_{j}')_{j\geq 1}$ for which a representation for $Z_B$ as in \eqref{eq:mean_z} holds true, where the common distribution of the iid sequence $(U_{j}')_{j\geq 1}$ is $G_{B}$, say. Additionally, independence between $Z_A$ and $Z_B$ yields independence between the sequences $(\tilde{P}_{A,j})_{j\ge 1}$ and $(\tilde{P}_{B,j})_{j\ge 1}$ and between $(U_{j})_{j\ge 1}$ and $(U_{j}')_{j\ge 1}$. By 
virtue of \eqref{eq:zi} 
$$Z_{A\cup B}\overset d = \int_0^1 x \ \wt{p}_{{A\cup B}}(\ddr x)\overset d=\epsilon_{A,B}\int_0^1 x \ \wt{p}_{A}(\ddr x)+(1-\epsilon_{A,B})\int_0^1 x \ \wt{p}_{B}(\ddr x)$$
where $\wt p_{A\cup B}$ is a Dirichlet process on $[0,1]$ with parameters $(H(A\cup B), G_{A\cup B})$, for some probability distribution $G_{A\cup B}$ restricted to $A\cup B$,  and $\epsilon_{A,B}$ is beta--distributed with parameters $(H(A),H(B))$ and is independent from both $\wt{p}_{A}$ and $\wt{p}_{B}$. 
Therefore it has to be
$$ H(A\cup B) G_{A\cup B}(\cdot)=H(A) G_{A}(\cdot)+H(B) G_{B}(\cdot)$$
for every $A,B\in\Xcr$ disjoint. In other words, there is a measure $G$ on the product space $\Xcr\times{\cal B}([0,1])$ satisfying
$G(A\times [0,1])=H(A)$  and $G_A(C)=G(A\times C)/H(A)$ for every $A\in\Xcr$ and $C\in{\cal B}([0,1])$. Correspondingly, there will be a probability kernel $Q(x,\ddr z)$ on $\Xcr\times{\cal B}([0,1])$ such that
$$G(A\times C)=\int_{A\times C}H(\ddr x)Q(x,\ddr s)$$
which is what we wanted to prove.
\end{proof}

\subsection{Laplace functional.}

We are now in a position to state then main result of this section: gamma $(c,P_0)$ random measures in canonical correlation form a convex set whose extreme points are indexed by certain linear functionals of a gamma $(c,P_0)$ completely random measure.

\begin{thm}\label{thm:main}
Let $(\wt\mu_1,\wt\mu_2)$ be a vector of gamma CRMs with parameter measure $cP_0$. 
% \begin{itemize}\addtolength{\itemsep}{6pt}
% \item[{\rm (i)}] $(\wt\mu_1,\wt\mu_2)$ is in canonical correlation if and only if there exists a sequence of $[0,1]$-valued random variables $\{Z_A:A\in\Xcr\}$ such that for every $d\ge 1$ and collection $\Ac=\{A_1,\ldots,A_d\}$ of pairwise disjoint sets in $\Xcr,$ the finite-dimensional vectors $\bm{\wt{\mu}}_{1,\Ac}$ and $\bm{\wt{\mu}}_{2,\Ac}$ are canonically correlated via the sequence of moments
%\begin{equation}
%\rho_{\bm{n}}(\Ac)=\E\left[\prod_{i=1}^d Z_{A_i}^{n_i}\right]
%\label{eq:moment}
%\end{equation}
%Moreover, for any disjoint pair $A,B\in\Xcr$,
%\begin{equation}
%Z_{A\cup B}\overset{d}{=}\ep_{A,B}Z_A+(1-\ep_{A,B})Z_B
%\label{eq:zi}
%\end{equation}
%with $\ep_{A,B}\sim\mbox{beta}(cP_0(A),cP_0(B))$ and is independent from $\{Z_A,Z_B\}$.
%\item[{\rm (ii)}] 
$(\wt\mu_1,\wt\mu_2)$ is in canonical correlation if and only if its joint Laplace functional % of $(\wt\mu_1,\wt\mu_2)$ %evaluated at a pair of measurable functions $f$ and $g$ such that $\wt\mu_1(|f|)<\infty$ and $\wt\mu_2(|g|)<\infty$, a.s.--$\P$,
is as in \eqref{eq:repres_biv_lapl},
% \begin{equation}
%\phi(f,g)=\,\:
%\phi_{cP_0}(f)\:\phi_{cP_0}(g)\:\E\left[\edr^{\int \frac{fg}{(1+f)(1+g)}\,\ddr K}\right]
%\label{eq:mvexlap0alt}
%\end{equation}
for a random measure $K$  on $(\Xc,\Xcr)$ with exchangeable increments determined as follows: there exists a (possibly random) probability kernel $Q(x,\ddr z)=Q_x(\ddr z)$ on $\Xcr\times{\cal B}([0,1])$ such that,
\begin{equation}
\label{eq:kmeasurenew}
K(f)\overset{d}=\int_{\Xc\times[0,1]} s f(x)\ \wt\mu(\ddr x,\ddr s)
\end{equation}
where $\wt\mu$ is, conditionally on $Q$, a Gamma random measure on $\Xc\times [0,1]$ with parameter $(c,G)$ where $G(\ddr x,\ddr s):=cP_0(\ddr x)Q_x(\ddr s).$ Then $(\wt\mu_1,\wt\mu_2)$ defines a CRM vector if and only if the underlying kernel $Q$ is deterministic.
%OLD 
%such that if $\wt\mu$ is a gamma CRM with parameter measure $cP_0$ then, for any disjoint $A_1,\ldots,A_d \in\Xcr^d$ and every $d\in\N$, \begin{equation}
%(K(A_1),\ldots,K(A_d))\overset d  = (Z_{A_1}\wt\mu(A_1),\ldots, Z_{A_2}\wt\mu(A_d))
%\label{eq:kfd},
%\end{equation}  where $(Z_{A_i}:i=1,\ldots,d)$ is a vector of $[0,1]$-valued random variables independent of $(\wt\mu(A_1),\ldots,\wt\mu(A_d))$.
%\item[{\rm (iii)}] The canonical correlation coefficients $\rho:=\{\rho_{\bm{n}}:\: \bm{n}\in\Z_+^d,\: d\ge 1\}$ of the finite-dimensional distributions of $(\wt\mu_1,\wt\mu_2)$ are such that
%where $Z$ and $\ep_{A,B}$ are independent and the latter has a beta distribution with parameters $(cP_0(A),cP_0(B))$.
 %\end{itemize}
\end{thm}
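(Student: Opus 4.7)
The plan is to prove the equivalence (\ref{eq:repres_biv_lapl}) by evaluating both sides on simple test functions and reducing the question to the canonical-correlation expansion of Lemma~\ref{l:zadditive}; the main ingredients are Lemma~\ref{l:zadditive}, Theorem~\ref{thm:zeta} and a de Finetti-type mixing argument on the family $\{Z_A\}_{A\in\Xcr}$.

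\textbf{Sufficiency.} Assuming (\ref{eq:repres_biv_lapl}) with $K$ as in (\ref{eq:kmeasurenew}), pick simple $f_1=\sum_i s_i\indic_{A_i}$ and $f_2=\sum_i t_i\indic_{A_i}$ on pairwise disjoint $A_i$, so that $\theta(x):=f_1f_2/[(1+f_1)(1+f_2)]=\sum_i\theta_i\indic_{A_i}(x)$ with $\theta_i=s_it_i/[(1+s_i)(1+t_i)]$. Conditional on $Q$, $\wt\mu$ is a gamma CRM on $\Xc\times[0,1]$, hence the $K(A_i)$ are conditionally independent and each $K(A_i)\eqd Y_{A_i}Z_{A_i}$, with $Y_{A_i}\sim\Gamma_{cP_0(A_i),1}$ independent of the Dirichlet mean $Z_{A_i}=\int_0^1 s\,\wt p_{A_i}(\ddr s)$ of parameter $(cP_0(A_i),G_{A_i})$. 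Expanding $\E[\exp\{\sum_i\theta_iK(A_i)\}\mid Q]$ as a power series in the $\theta_i$ and integrating over $Q$ reproduces exactly the canonical-correlation expansion (\ref{eq:dexpansion}) of Lemma~\ref{l:zadditive} with coefficients $\rho_{\bm n}(\Ac)=\E[\prod_iZ_{A_i}^{n_i}]$, which characterises canonical correlation.

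\textbf{Necessity.} Conversely, let $(\wt\mu_1,\wt\mu_2)$ be canonically correlated. Lemma~\ref{l:zadditive} supplies the family $\{Z_A\}_{A\in\Xcr}$ of $[0,1]$-valued random variables satisfying (\ref{eq:zi}) and (\ref{eq:moment}), and (\ref{eq:rand_meas}) already yields, for simple $f_1,f_2$,
\[
\phi(f_1,f_2)=\phi_{(c,P_0)}(f_1)\,\phi_{(c,P_0)}(f_2)\,\E\!\left[\exp\!\Big\{{\textstyle\sum_i\theta_iY_{A_i}Z_{A_i}}\Big\}\right],
\]
with $Y_{A_i}\sim\Gamma_{cP_0(A_i),1}$ independent of $\{Z_A\}$. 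Setting $K(A):=Y_AZ_A$ and using the beta-gamma algebra together with (\ref{eq:zi}) makes this assignment finitely additive in distribution; a Kolmogorov extension then produces a bona fide random measure $K$ on $\Xc$. To identify $K$ with (\ref{eq:kmeasurenew}), I would iterate (\ref{eq:zi}) through nested dyadic partitions of each $A$, as in the proof of Theorem~\ref{thm:zeta}, to write $Z_A\eqd\sum_j\wt P_{A,j}U_j$ with $(\wt P_{A,j})_j$ a GEM$(cP_0(A))$ sequence; here the $U_j$ are merely \emph{exchangeable} rather than i.i.d., since the independence-across-disjoint-sets hypothesis of Theorem~\ref{thm:zeta} has been dropped. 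De Finetti's theorem then furnishes a random directing measure $G_A^*$ on $[0,1]$ such that, conditionally on $G_A^*$, the $U_j$ are i.i.d.\ $G_A^*$; the compatibility constraint (\ref{eq:zi}) across varying $A$ pins down a single random probability kernel $Q(x,\ddr s)$ with $G_A^*(\ddr s)=\int_A Q_x(\ddr s)P_0(\ddr x)/P_0(A)$. Constructing $\wt\mu$ on $\Xc\times[0,1]$ as conditionally gamma given $Q$, with parameter $cP_0(\ddr x)Q_x(\ddr s)$, and defining $K$ by (\ref{eq:kmeasurenew}), closes the loop by appealing to the sufficiency computation.

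\textbf{CRM characterization and main obstacle.} If $Q$ is deterministic, $\wt\mu$ is itself an unconditional gamma CRM, so $K$ has independent increments, the exponent in (\ref{eq:repres_biv_lapl}) factorises over disjoint Borel sets, and $(\wt\mu_1,\wt\mu_2)$ is jointly a CRM vector. Conversely, independent joint increments of $(\wt\mu_1,\wt\mu_2)$ force the $Z_A$'s to be independent across disjoint $A$'s, so Theorem~\ref{thm:zeta} applies directly and the directing kernel $Q$ must be deterministic. The main obstacle is the de Finetti step in the necessity direction: the directing measures $G_A^*$ associated with different sets $A$ must be shown to stem from a single random kernel $Q$, which requires handling the consistency relations (\ref{eq:zi}) simultaneously across all finite disjoint decompositions of $\Xc$.
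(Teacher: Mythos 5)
Your sufficiency argument and your treatment of the CRM/deterministic-$Q$ equivalence match the paper's, but the necessity direction has a genuine gap, and it sits exactly where you flag it. You propose to rerun the proof of Theorem~\ref{thm:zeta} without the independence-across-disjoint-sets hypothesis, asserting that the limiting variables $U_j$ in the stick-breaking representation $Z_A\eqd\sum_j\wt P_{A,j}U_j$ are exchangeable and then invoking de Finetti set by set. Two things are missing. First, the exchangeability of $(U_j)_{j\ge 1}$ is not free: in Theorem~\ref{thm:zeta} the product structure of the limit law of $(U_{m,1},\ldots,U_{m,k})$ comes from the assumed independence of $Z_{A_{\bm\ep}}$ over disjoint cells, and once that is dropped you need a separate symmetry argument (e.g.\ on equal-$P_0$-mass partitions) before any de Finetti theorem can be applied; you give none. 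Second, and more seriously, even granting a random directing measure $G_A^*$ for each $A$, these are only defined set by set as conditional laws, and you would have to realise them jointly on one probability space and verify the additivity relation $H(A\cup B)G^*_{A\cup B}=H(A)G^*_A+H(B)G^*_B$ almost surely, simultaneously over all disjoint pairs, to extract a single random kernel $Q$. You acknowledge this as ``the main obstacle'' but do not resolve it, so the necessity proof is incomplete as written.

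The paper circumvents precisely this gluing problem by working globally rather than set by set: it first builds the random measure $K$ from the consistent family of laws $p_{\Ac}$ (via Theorem~5.4 of \cite{kallenberg}, checking $p_{\Bcr_n}\Rightarrow\delta_0$ as $B_n\downarrow\varnothing$), then shows $K$ has \emph{exchangeable increments} by a symmetry argument on sets of equal $cP_0$-mass, and invokes Kallenberg's representation of exchangeable-increment random measures as mixtures of independent-increment ones (\cite{KA05}, Proposition~1.21). Conditionally on the mixing variable one is back in the independent-increments setting, so Theorem~\ref{thm:zeta} applies directly and delivers the kernel $Q$ (random only through the mixing), and the Markov--Krein identity then turns the product of Cauchy--Stieltjes transforms into the exponential Laplace functional \eqref{eq:repres_biv_lapl} with $K$ as in \eqref{eq:kmeasurenew}. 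If you want to salvage your route, the fix is essentially to replace your per-set de Finetti step by this single conditioning on the directing random element of $K$, which is exactly what makes the consistency over all finite decompositions automatic.
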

%Part (ii) of Theorem \ref{thm:main} is reminiscent of a similar result of Griffiths and Milne (\cite{GM78}, formula (12)) for a class of dependent Poisson processes with same marginal intensity, say $cP_0$, \textcolor{red}{and with a copula with polynomial eigenfunctions. In the Poisson family, however, the random measure $K$ is dominated by the deterministic intensity measure $cP_0$ of the two identical marginal distributions, whereas in the gamma family $K$ is stochastically dominated by a random measure $\wt{\mu}$}.
 %identical in distribution to the two marginals. This somehow reflects the known relationship between the Gamma distribution being a natural prior measure for a poisson likelihood.

\begin{proof}

We want to prove that %the \lq\lq only if\rq\rq part of (i) (in particular, 
\eqref {eq:zi}
%)
 implies the \lq\lq only if\rq\rq  part of the claim. To this end, consider the measures $p_{\Ac}$ defined by \eqref{eq:kimom_1} in the proof of Lemma \ref{l:zadditive}. We show that the collection $\Pcr':=\{p_{\Ac}:\: \Ac\in\Xcr^d,\: d\ge 1\}$ uniquely identifies a random measure on $(\Xc,\Xcr)$. Notice that the system $\Pcr'$ can be extended to define a family $\Pcr=\{p_{B_1,\ldots,B_d}:\: B_i\in\Xcr,\: d\ge 1\}$ in such a way that $\Pcr$ is consistent and
\[
p_{\Bcr}(\{(x,y,z)\in(\R_+)^3: \: x+y=z\})=1
\]
for any $A$ and $B$ in $\Xcr$ such that $A\cap B=\varnothing$ and $\Bcr=\{A,B,A\cup B\}$. Moreover, if $d=1$ and $\Ac=\{A_1\}$, $p_{\Ac}$ coincides with the probability distribution of $Z_{A_1}\,\wt\mu(A_1)$, where $\wt\mu$ is a $Gamma$ CRM with parameters $(c,P_0)$, independent of $Z_{A_1}$. Hence, if $(B_n)_{n\ge 1}$ is a sequence of elements in $\Xcr$ such that $\lim_n B_n=\varnothing$ and $\Bcr_n=\{B_n\}$ for any $n\ge 1$, $p_{\Bcr_n}\Rightarrow \delta_0$ almost surely, where $\Rightarrow$ stands for weak convergence. By Theorem~5.4 in \cite{kallenberg} there exists a unique random measure, say $K$, on $(\Xc,\Xcr)$ admitting $\Pcr$ as its system of finite--dimensional distributions. \\
Such a measure $K$ has necessarily exchangeable increments. Indeed, chose $\Ac=(A_1,\ldots,A_d)$ disjoint such that $cP_0(A_i)=cP_0(A_j)$ for $i,j=1,\ldots,d$. Then

\[
\E\left[\prod_{i=1}^d{\wt{L}_{n_i,\alpha_i}(X_i)}{\wt{L}_{n_i,\alpha_i}(Y_i)}\right]=
\E\left[\prod_{i=1}^d{\wt{L}_{n_i,\alpha_{\sigma(i)}}(X_i)}{\wt{L}_{n_i,\alpha_{\sigma(i)}}(Y_i)}\right]
\]
 for any permutation $\sigma$ of $(1,\ldots,d)$. Hence
 
 \[
 \rho_{\bm{n}}(\Ac)= \rho_{\bm{n}}(\sigma \Ac)\ \ \forall\sigma
 \]
(in particular, by \eqref{eq:moment}, the vector $(Z_{A_1},\ldots,Z_{A_d})$ is exchangeable). Thus the vector $(K(A_1),\ldots,K(A_d))$ (whose Laplace transform is given by the expectation of \eqref{eq:dexpansion}), is exchangeable, hence $K$ has exchangeable increments. This means that $K$ can be seen as a convex linear combination of measures with independent increments (see e.g. \cite{KA05}, Proposition 1.21). Theorem \ref{thm:zeta} provides a characterisation for the extreme points of such convex combination: in the case of $K$ with independent increments, the coordinates of
$$(K(A_1),\ldots,K(A_d))\overset d  = (Z_{A_1}\wt\mu(A_1),\ldots, Z_{A_2}\wt\mu(A_d))$$
are independent and 
$$Z_A=\int_{[0,1]}z\ \wt p_{A}(dx)$$
are Dirichlet Process means with the same notation as in Theorem \ref{thm:zeta}. Using $\theta_d=\sum_{i=1}^d \theta_i\indic_{A_i}$ and the $d$ sets $A_1,\ldots,A_d$ in $\Xcr$ being pairwise disjoint, \eqref{eq:dexpansion}) 
induces the following expansion for the Laplace functional of $K$: letting $Y_i=\wt\mu(A_i)$ $(i=1,\ldots,d)$
\begin{eqnarray}
\Ex{e^{K(\theta_d)}}&=&\Ex{\prod_{i=1}^d\sum_{n_i=0}^d\frac{(\theta_i Y_i Z_{A_i})^{n_i}}{n_i!}}\notag\\
&=&\Ex{\prod_{i=1}^d e^{\theta_i Y_i Z_{A_i}}}=\prod_{i=1}^d\Ex{(1-\theta_i Z_{A_i})^{-cP_0(A_i)}}.\label{eq:klap_1}
\end{eqnarray}
Using the so--called Markov-Krein, or Cifarelli--Regazzini, identity (see \cite{lancillotto} and \cite{LPsurvmeans}) \eqref{eq:klap_1} can be rewritten as
\begin{equation}
\Ex{e^{K(\theta_d)}}=\exp\left\{-\sum_{i=1}^d\int_{[0,1]}\ \log(1-\theta_i s)\ cP_0(A_i)G_{A_i}(\ddr s)\right\}\end{equation}
So, if $\theta_d\to\theta$ pointwise as ${d\to \infty}$, then 
$$\Ex{e^{K(\theta_d)}}\to\exp\left\{-c\int_{\X\times[0,1]} \log(1-\theta(x)s)\ P_0(\ddr x)\ Q_x(\ddr s) \right\}$$
for a kernel $Q_x$ as given in Theorem \ref{thm:zeta}. The form of this exponent indicates that
$$K(\theta)=\int_{\X\times[0,1]} s \ \theta(x)\ \wt\mu(\ddr x,\ddr s),$$
with $\wt\mu$ being a Gamma CRM with parameter measure $cP_0(dx)Q_x(dz).$
In other words, 
$$K(\ddr x)=\int_0^1 s \ \wt\mu(\ddr x,\ddr s).$$
The converse of the theorem is much simpler.
%Moreover, since $K(A)\stackrel{d}{=}Z_A\,\wt\mu(A)$ for any $A\in\Xcr$, then $\P[K(A)> x]\le \P[\wt\mu(A)> x]$ for any $A$ in $\Xcr$ and $x\in\R_+$.
Assume that the Laplace functional of $(\mu_1,\mu_2)$ is of the form \eqref{eq:repres_biv_lapl} with a measure $K$ satisfying \eqref{eq:kmeasurenew} with a random kernel Q. With $f=\sum_{i=1}^d s_i\indic_{A_i}$ and $g=\sum_{i=1}^d t_i\indic_{A_i}$ and the $d$ sets $A_1,\ldots,A_d$ in $\Xcr$ being pairwise disjoint, the Laplace transform $\phi(f,g)$  in \eqref{eq:repres_biv_lapl} is of the form (\ref{eq:rand_meas}), and since in \eqref{eq:klap_1} $Z_A$ and $\wt\mu(A)$ are still independent even if, when $Q$ is random, the collection $(Z_A)$ does not necessarily have independent coordinates. For every $A\in\Xcr$, an expansion for $\phi(f,g)$ is thus precisely of the form (\ref{eq:dexpansion}), which shows that $(\wt\mu_1,\wt\mu_2)$ are in canonical correlations with $\rho_{\bm{n}}$ as in \eqref{eq:moment}. 
%By additivity of random measures, for $A$ and $B$ disjoint, $K(A\cup B)=K(A)+K(B).$ But $\wt\mu(A)$ and $\wt\mu(B)$ are independent Gamma random variables, then
%\[
%Z_{A\cup B}\stackrel{d}{=}\frac{Z_A\wt\mu(A)+Z_B\wt\mu(B)}{\wt\mu(A\cup B)}
%\]
%(it can be easily seen by taking the moments of any order of both sides) and \eqref{eq:zi} follows thus completing the proof.
\end{proof}
\begin{rmk}
{\rm 
It is interesting to note that Theorem~\ref{thm:main} is reminiscent of a result by Griffiths and Milne (\cite{GM78}, formula (12)) for a class of dependent Poisson processes with identical marginal intensity, say $H$.  }
\end{rmk}

\begin{rmk}
{\rm 
According to Theorem~\ref{thm:main}, when $(\wt\mu_1,\wt\mu_2)$ are in canonical correlation but do not have independent joint increments, they still do have exchangeable increments. Correspondingly, in the light of Theorem \ref{thm:zeta}, for any $A\in \Xcr$, the canonical coefficients $\rho_n(A)=\Ex{Z_A^n}$ are moments of a Dirichlet random mean of the form \eqref{eq:Z_A}, except that the driving kernel $Q$ is random. The established connection with random Dirichlet means is therefore preserved in full generality. This also leads to determine a closed form expression for the canonical correlations of any vector of Gamma CRMs. Indeed, on the basis of \eqref{eq:Z_A} and of a result in \cite{Yamato} one can provide a combinatorial expansion for $\rho_{\bm{n}}$. The proof is a direct application of Proposition~2 in \cite{Yamato}, with the caveat that here we are dealing with random baseline measures.
\begin{cor}
Let $(\wt\mu_1,\wt\mu_2)$  be a vector of canonically correlated Gamma CRMs, whose marginal parameters are $(c,P_0)$. For any finite collection $\Ac=\{A_1,\ldots,A_d\}$ of pairwise disjoint sets in $\Xcr$ and $\bm{n}\in\Z_+^{d}$, the corresponding canonical correlations can be represented as
  \begin{multline}
    \label{eq:means_can_corr}
    \rho_{\bm{n}}(\Ac)=\frac{1}{\prod_{i=1}^d (cP_0(A_i))_{n_i}}\:
    \sum_{k_1=1}^{n_1}\:\cdots\:\sum_{k_d=1}^{n_d}\\[7pt]
    \E\left[\prod_{i=1}^d B_{n_i,k_i}(r_{1,i},2!r_{2,i},\ldots,
    (n_i-k_i+1)!r_{n-k+1,i})\right]
  \end{multline}
where $r_{j,i}=c\,\int_{A_i\times [0,1]}s^j\,Q(x,\ddr s)\, P_0(\ddr x)/P_0(A_i)$ and $B_{n,k}$ is, for any $k\le n$, the partial exponential Bell polynomial.
\end{cor}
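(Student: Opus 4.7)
My plan is to reduce the claim to a coordinatewise application of Proposition~2 of \cite{Yamato}, after first eliminating the randomness of the underlying kernel by conditioning. The starting point is the distributional representation furnished by Theorems~\ref{thm:zeta} and \ref{thm:main}: canonical correlation coefficients can be written as
\[
\rho_{\bm{n}}(\Ac)=\E\Big[\prod_{i=1}^d Z_{A_i}^{n_i}\Big],
\qquad
Z_{A_i}\overset d = \int_0^1 s\,\wt p_{A_i}(\ddr s),
\]
where $\wt p_{A_i}$ is a Dirichlet process on $[0,1]$ with total mass $cP_0(A_i)$ and base probability measure $G_{A_i}(\ddr s)=\int_{A_i}Q(x,\ddr s)\,P_0(\ddr x)/P_0(A_i)$, and $Q$ is the possibly random probability kernel appearing in Theorem~\ref{thm:main}.

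Next I would condition on $Q$. The construction in the proof of Theorem~\ref{thm:main} exhibits $K(\ddr x)=\int_0^1 s\,\wt\mu(\ddr x,\ddr s)$, where, given $Q$, $\wt\mu$ is a genuine gamma CRM on $\Xc\times[0,1]$ with deterministic intensity $cP_0(\ddr x)\,Q_x(\ddr s)$. Consequently, restricted to the disjoint product sets $A_i\times[0,1]$, the measure $\wt\mu$ splits into independent gamma CRMs, and the associated Dirichlet random means $Z_{A_1},\ldots,Z_{A_d}$ are mutually independent conditionally on $Q$. In particular,
\[
\E\Big[\prod_{i=1}^d Z_{A_i}^{n_i}\,\Big|\,Q\Big]=\prod_{i=1}^d \E\bigl[Z_{A_i}^{n_i}\,\big|\,Q\bigr].
\]

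At this stage I would invoke Yamato's moment formula coordinatewise. Conditional on $Q$, each $Z_{A_i}$ is an ordinary linear functional of a Dirichlet process with deterministic parameters $(cP_0(A_i),G_{A_i})$, so Proposition~2 of \cite{Yamato} yields
\[
\E\bigl[Z_{A_i}^{n_i}\,\big|\,Q\bigr]=\frac{1}{(cP_0(A_i))_{n_i}}\sum_{k_i=1}^{n_i}B_{n_i,k_i}\bigl(r_{1,i},2!\,r_{2,i},\ldots,(n_i-k_i+1)!\,r_{n_i-k_i+1,i}\bigr),
\]
where the Bell-polynomial arguments are the scaled raw moments of the conditional base measure, i.e.\ $r_{j,i}=c\int_0^1 s^j G_{A_i}(\ddr s)=c\int_{A_i\times[0,1]}s^j Q(x,\ddr s)\,P_0(\ddr x)/P_0(A_i)$, in agreement with the statement. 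Multiplying these conditional moments across $i$ and then taking the outer expectation over $Q$, by Fubini/tower, delivers the claimed identity.

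The only delicate point is the conditional-independence step: it is precisely the deterministic-$Q$ case of Theorem~\ref{thm:main} applied inside the conditional expectation, and it is essential to let the quantities $r_{j,i}$ carry the residual randomness of $Q$ so that the outer $\E$ does not distribute across the index $i$. This structural feature is exactly what preserves, at the level of the closed formula \eqref{eq:means_can_corr}, the dichotomy between canonically correlated gamma CRM vectors (with deterministic $Q$, making $r_{j,i}$ non-random and the product of Bell polynomials factorising) and general exchangeable-increment mixtures.
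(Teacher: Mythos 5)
Your argument is correct and is essentially the paper's own proof: the paper disposes of the corollary as ``a direct application of Proposition~2 in \cite{Yamato}, with the caveat that here we are dealing with random baseline measures,'' and your conditioning on the (possibly random) kernel $Q$ — using that, given $Q$, the measure $\wt\mu$ in \eqref{eq:kmeasurenew} is a gamma CRM so the $Z_{A_i}$ are conditionally independent Dirichlet means with parameters $(cP_0(A_i),G_{A_i})$ — simply makes that caveat explicit before applying Yamato coordinatewise and taking the outer expectation. No gap; this is the intended route, just spelled out in more detail than the paper gives.
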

}
\end{rmk}

%\subsection{Some remarks.}

%\subsubsection{Poisson Canonical Correlations.}
%with the difference that, in the Poisson case, the random measure $K$ driving the dependence was dominated by the fixed marginal intensity measure  $cP_0$ (see \cite{GM78}, Section 3), whereas in our case the dominating measure is a gamma CRM, in its turn a random measure driven by a parameter dominated by $cP_0$.\\

%\textcolor{red}{In Griffiths and Milne I guess is the intensity of $K$, not $K$ itself, which is dominated by the base measure $c P_0$.}

\begin{rmk} {\rm Besides being relevant for the goals set forth in this paper, Theorem \ref{thm:zeta} and Theorem \ref{thm:main}  are further interesting in that they reproduce and complete a result on one-dimensional gamma canonically correlated random variables with infinitely divisible joint distribution, obtained by Griffiths in \cite{Bobbo(70)}. We restate Griffiths' result as follows.}

\begin{cor}
A sequence $(\rho_n)_{n\ge 1}$ defines the canonical correlations for an infinitely divisible vector $(X,Y)\in\R_+^2$ with marginally $\Gamma_{c,1}$ distributed random variables if and only if $\rho_n=\E\left[\left(\wt{p}(f)\right)^n \right]$, with $\wt p$ being a Dirichlet process with parameter measure $c P_0$ for some probability measure $P_0$ on a Polish space $(\Xc,\Xcr)$ and for some measurable function $f:\Xc\to[0,1]$ such that $\int\log(1+|f|)\,\ddr P_0<\infty$.
\end{cor}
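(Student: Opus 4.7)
The plan is to derive the corollary by specialising Theorems~\ref{thm:zeta} and~\ref{thm:main} to the total--mass pair of an appropriately chosen canonically correlated gamma CRM vector with independent joint increments.

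For the sufficiency direction, given $(\Xc,P_0,f)$ and a Dirichlet process $\wt p$ on $(\Xc,\Xcr)$ with parameter $cP_0$, I would invoke Theorem~\ref{thm:main} with the deterministic kernel $Q_x(\ddr s):=\delta_{f(x)}(\ddr s)$ to produce a canonically correlated gamma CRM vector $(\wt\mu_1,\wt\mu_2)$ on $\Xc$ with $\Gamma_{cP_0}$ marginals and independent joint increments. The total--mass pair $(X,Y):=(\wt\mu_1(\Xc),\wt\mu_2(\Xc))$ then has $\Gamma_{c,1}$ marginals, is infinitely divisible (being the value on the whole space of a pair of CRMs with jointly independent increments), and by Theorem~\ref{thm:zeta} admits canonical correlations $\rho_n=\E[Z_\Xc^n]$ with $Z_\Xc=\int_0^1 s\,\wt p_\Xc(\ddr s)$ for $\wt p_\Xc$ Dirichlet on $[0,1]$ with parameter measure $c(P_0\circ f^{-1})$. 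A change of variables gives $Z_\Xc\eqd \wt p(f)$, yielding $\rho_n=\E[\wt p(f)^n]$; the integrability $\int\log(1+|f|)\,\ddr P_0<\infty$ is automatic since $|f|\le 1$.

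For the necessity direction, assume $(X,Y)$ is infinitely divisible with $\Gamma_{c,1}$ marginals and canonical correlation sequence $(\rho_n)$. I would realise $(X,Y)$ as the pair of total masses of a gamma CRM vector on $\Xc=(0,1)$ with $P_0$ Lebesgue: take the bivariate L\'evy measure $\nu$ of $(X,Y)$ on $\R_+^2\setminus\{0\}$, build a Poisson random measure $N$ on $\Xc\times(\R_+^2\setminus\{0\})$ with intensity $P_0\otimes\nu$, and define $\wt\mu_i(A):=\int_{A\times\R_+^2} x_i\,N(\ddr u,\ddr\bm{x})$ for $i=1,2$. This yields a pair of gamma CRMs with parameter $(c,P_0)$ that have independent joint increments and whose total--mass pair equals $(X,Y)$ in law. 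By Theorem~\ref{th:G69}, $\rho_n=\E[Z^n]$ for some $Z\in[0,1]$, so the joint Laplace of $(X,Y)$ factorises as $(1+s)^{-c}(1+t)^{-c}\Phi(\theta)$ with $\Phi(\theta)=\E[(1-Z\theta)^{-c}]$ and $\theta=st(1+s)^{-1}(1+t)^{-1}$. For every $A$, $(\wt\mu_1(A),\wt\mu_2(A))$ then has Laplace transform $(1+s)^{-\alpha}(1+t)^{-\alpha}\Phi(\theta)^{\alpha/c}$ with $\alpha:=cP_0(A)$, which via the Laplace formula~\eqref{Laglap} for Laguerre polynomials is exactly the canonically correlated form at parameter $\alpha$; hence every such marginal pair is canonically correlated, and independence of joint increments upgrades this to canonical correlation of the full CRM vector in the sense of Definition~\ref{def:ccrm}.

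With canonical correlation and independent joint increments in hand, the CRM part of Theorem~\ref{thm:main} forces the underlying kernel $Q$ to be deterministic, and Theorem~\ref{thm:zeta} identifies $\rho_n=\E[Z_\Xc^n]$ with $Z_\Xc=\int_0^1 s\,\wt p_\Xc(\ddr s)$ a Dirichlet mean on $[0,1]$ whose base measure is $cG$, where $G(\cdot):=\int_{(0,1)} Q_u(\cdot)\,P_0(\ddr u)$. Writing this Dirichlet mean as $\wt p(f)$ on an abstract Polish space (for instance $\Xc'=[0,1]$, $P_0'=G$, $f(s)=s$) completes the proof. The step I expect to be the main obstacle is the canonical correlation check in the necessity direction: it rests on the observation that the canonical correlation structure of a gamma joint Laplace transform---namely its dependence on $(s,t)$ only through the marginal factors and the variable $\theta$---is preserved under the fractional--power operation induced by infinite divisibility, a fact that becomes transparent only once one unpacks the extreme--Laplace representation \eqref{extremejlap}.
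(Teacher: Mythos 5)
Your proposal is correct and takes essentially the same route as the paper, whose proof simply interprets the infinitely divisible pair as the vector of total masses of a canonically correlated gamma CRM vector with jointly independent increments and then invokes Theorems~\ref{thm:zeta} and~\ref{thm:main}. Your Poisson-random-measure construction from the bivariate L\'evy measure and the $\theta$-power (fractional convolution) argument merely supply the details of this realisation that the paper states without proof.
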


\begin{proof} {\rm Any such infinitely divisible pair $(X,Y)$ can be interpreted as the vector of total masses $(\wt\mu_1(\Xc),\wt\mu_2(\Xc))$ where $(\wt\mu_1,\wt\mu_2)$ is a pair of canonically correlated gamma$(c,P_0)$ random measures with joint independent increments i.e. those considered in Lemma \ref{thm:zeta}. The proof thus follows immediately from Theorem \ref{thm:zeta} and Theorem \ref{thm:main}.}

\end{proof}
\end{rmk}

\section{Illustrations}\label{sec:illustr}

It is apparent from the previous results that the dependence between $\wt\mu_1$ and $\wt\mu_2$ is determined by $K$ and, more specifically, by the (possibly random) probability kernel $\{Q_x:\:x\in\Xc\}$ underlying the distribution of $K$, where $Q_x(\cdot)=Q(x,\cdot)$. We will refer to $Q_x$ as the \textit{directing kernel} for the pair $(\wt\mu_1,\wt\mu_2)$ for $K$. We will now proceed with ilustrating a few examples that correspond to different choice of $Q_x$.

\subsection{$Q_x$ degenerate at a deterministic constant, and Dawson-Watanabe processes} The simplest example one might think of corresponds to assuming %an a.s. space-homogeneous, degenerate kernel such as $Q_x(\cdot)=\delta_{z}(\cdot)$ 
$Q_x=\delta_z$, for any $x\in\Xc$ and 
%for every $x\in\Xc$ (and $\ddr K=z\ddr \wt\mu$ )   
for some constant $z\in(0,1)$. % (i.e. $\zeta(\omega)=z$ for every $\omega\in\Omega$, \textcolor{red}{should we use $\zeta(x,\omega)$ also here, as it would be in accordance with the notation used in later sections?}). 
It is then obvious that $Z_A=z$, for every $\omega$ in $\Omega$ and $A$ in $\Xcr$. Consequently, $\rho_{\bm{n}}(\Ac)=z^{|\bm{n}|}$ for every collection $\Ac=\{A_1,\ldots,A_d\}$ of $d$ pairwise disjoint and measurable subsets of $\Xc$ and for any vector of non--negative integers $\bm{n}=(n_1,\ldots,n_d)$ with $|\bm{n}|=\sum_{i=1}^d n_i$. For $\Xc$ compact, the resulting distribution of the pair $(\wt\mu_1,\wt\mu_2)$ has the interpretation as the distribution of {$(\lambda \xi_{\lambda,0},\lambda \xi_{\lambda,t})$} for any $\lambda\in \R_+$, where $t=-2\log z/\lambda$ and $\xi_\lambda=(\xi_{\lambda,t}:t\geq0)$ is a
Dawson-Watanabe measure-valued continuous-state branching process associated to the generator
\begin{equation}
\mathcal{L}=\frac{1}{2}\int_{\Xc} \mu(dx)\frac{\delta^2}{\delta\mu(x)^2}+\frac{1}{2}c\int_{\Xc} P_0(dx)\frac{\delta}{\delta\mu(x)}-
\frac{1}{2}\lambda\int_{\Xc}\mu(dx)\frac{\delta}{\delta\mu(x)},
\label{eq:MBDIgen}
\end{equation}
 where ${\delta}/{\delta\mu(x)}$ are Gateaux derivatives. The domain $\mathcal{D}(\mathcal{L})$ of the generator is given by the space
 $$\left\{\varphi:\varphi(\mu)\equiv F\left(\mu(f_1),\ldots,\mu(f_d)\right), F\in C_c^2(\R_+^d),f_1,\ldots,f_d\in C(\Xc),d\in\N\right\},$$
 where $C(\Xc)$ is the space of all continuous functions on $\Xc$ and $C_c^2(\R_+^d)$ the space of all continuous, twice differentiable functions on $\R_+^d$ with compact support. We will refer to any such process as a $\Gamma_{c,P_0}$-DW process. This process is the measure-valued extension of the CSBI with generator \eqref{1csbi}  mentioned in Section \ref{sec:1-dim}, important in our perspective in that it constitute the extremal set in the convex class of gamma canonical correlations in one dimension (Theorem \ref{th:G69}). The joint Laplace functional transform of the $\Gamma_{c,P_0}$-DW process 
\begin{equation}
\phi_t(f,g)
=\edr^{-c\,\left[\int_{\Xc}\log(1+\frac{1}{\lambda}[f(x)+g(x)+C_{-\lambda}(t)f(x)g(x)])
\:
P_0(dx)\right]} \label{eq:dwjlt}
\end{equation}
for every $t\geq 0,$ where $C_\lambda(t):=(\edr^{\lambda t/2}-1)/\lambda$, has been determined in \cite{EG93} and extend \eqref{analyt} to infinite dimensions. Indeed one can see that, if $\lambda=1,$ \eqref{eq:dwjlt} coincides with \eqref{eq:mvexlap1}  with $z:=1-C_{-1}(t)$.
%For general $\lambda,$ denote with $\xi_\lambda=(\xi_\lambda(t):\: t\ge 0)$ the $\Gamma$-DW process with criticality parameter $\lambda$.
From (\ref{eq:dwjlt}), it is immediate to see that, for every $\lambda,$ the re-scaled DW process $\xi^*=\lambda \xi_\lambda$ is a process whose bivariate distributions  {$(\xi^*_s,\xi^*_{s+t})$} have, for every $s,t\geq 0,$ canonical correlations $\{z^n\}$ given by $$z=1-\lambda C_{-\lambda}(t)=\edr^{-\lambda t/2}.$$
In \cite{EG93} an expansion was derived for the transition function of the $\Gamma$-DW process that can be used straight away for all bivariate gamma measures in this class, i.e. induced by $\zeta(x)=z$. One can then state the following result as a direct application of formula {\rm (1.8)} in \cite{EG93}, where we just substitute $1-\lambda C_{-\lambda}(t)$ with $z$.
\begin{cor}\label{cor:dwalg}
Let $(\wt\mu_1,\wt\mu_2)$ be a pair of gamma $(c,P_0)$ CRMs with canonical correlations driven by a degenerate homogeneous kernel $Q_x=\delta_{z}$, where $z\in(0,1]$ is a constant. The conditional distribution of $\wt\mu_2$, given $\wt\mu_1=m_1$, can be expanded as
     \begin{multline}
    p_{\beta}(m_1,\ddr\gamma)=\sum_{n=0}^\infty \mbox{\rm Po}_{(\beta \bar{m_1})}(n)\\
    \times \,
    \int_{{\Xc}^n}P_{m_1}^n(\ddr x_1,\,\ldots\, , \ddr
    x_n)\:\Gamma_{(c+n)P_n^*,(1+\beta)^{-1}}(\ddr\gamma)
\label{eq:mvtf}
\end{multline}
where: $\beta=z/(1-z),$ $\bar{m_1}=m_1(\Xc),$ $P_{m_1}(A):=m_1(A)/\bar{m}_1,$ $$P^*_n(\ddr x)=[cP_0(\ddr x)+n\sum_{i=1}^n\delta_{x_i}(\ddr x)]/(c+n)$$
and, for every $\beta>0,$ $\Gamma_{cP_0,\beta}$ denotes the probability distribution of $\beta\mu$ where $\mu$ is a Gamma CRM with parameters $(c,P_0)$. 

\end{cor}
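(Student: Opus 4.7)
The plan is to identify the joint distribution of $(\wt\mu_1,\wt\mu_2)$ specified by the corollary with a bidimensional marginal of a (rescaled) $\Gamma_{c,P_0}$-DW process, and then invoke directly the transition-function expansion derived in Ethier and Griffiths (1993), formula (1.8), via a parameter substitution.

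For the first step I would specialise Theorem~\ref{thm:main} to $Q_x=\delta_z$. Under \eqref{eq:kmeasurenew} the measure $K$ becomes $K(\ddr x)=z\,\wt\mu(\ddr x)$ for an independent $\Gamma_{cP_0}$ CRM $\wt\mu$, so the joint Laplace functional reads
\begin{equation*}
\phi(f_1,f_2)=\phi_{(c,P_0)}(f_1)\phi_{(c,P_0)}(f_2)\,
\phi_{(c,P_0)}\!\left(-\frac{z f_1f_2}{(1+f_1)(1+f_2)}\right).
\end{equation*}
A routine manipulation of the gamma exponent shows that this is exactly \eqref{eq:dwjlt} once one sets $z=1-\lambda C_{-\lambda}(t)=e^{-\lambda t/2}$. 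Hence $(\wt\mu_1,\wt\mu_2)\stackrel{d}{=}(\lambda\xi_{\lambda,0},\lambda\xi_{\lambda,t})$ for an appropriate choice of $(\lambda,t)$, and $p_\beta(m_1,\cdot)$ coincides (up to the deterministic rescaling by $\lambda$) with the transition probability of a stationary $\Gamma_{c,P_0}$-DW process started from $m_1$.

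For the second step I would then quote formula (1.8) of \cite{EG93}, which provides precisely the Poisson-mixture expansion: the number $N$ of offspring/immigrants is Poisson with mean proportional to $\bar m_1$; their spatial locations $x_1,\dots,x_N$ are conditionally i.i.d.\ from the empirical measure $P_{m_1}$; and the resulting state is a gamma CRM whose base measure is updated to $(c+N)P^*_N$, reflecting the superposition of the original parameter $cP_0$ with $N$ point masses created by the branching. The scale factor $(1+\beta)^{-1}$ absorbs the deterministic rescaling $\wt\mu_i=\lambda\xi_{\lambda}$ into the gamma scale.

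For the third step there is only parameter-matching to perform: under the identification $z=1-\lambda C_{-\lambda}(t)=e^{-\lambda t/2}$, Ethier--Griffiths's Poisson intensity parameter becomes $z/(1-z)=\beta$. Substituting $z$ for $1-\lambda C_{-\lambda}(t)$ everywhere in (1.8) of \cite{EG93} reproduces \eqref{eq:mvtf} verbatim. The only real obstacle is the bookkeeping of this rescaling, namely verifying that the change of variable $\wt\mu=\lambda\xi_\lambda$ and the matching of $z$ with $1-\lambda C_{-\lambda}(t)$ consistently convert the $\lambda$-dependent constants in \cite{EG93} into the scale $(1+\beta)^{-1}$ and the intensity $\beta\bar m_1$ appearing in the statement.
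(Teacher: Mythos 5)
Your proposal is correct and follows essentially the same route as the paper: specialise Theorem~\ref{thm:main} to $Q_x=\delta_z$ so that the joint Laplace functional matches \eqref{eq:dwjlt}, identify $(\wt\mu_1,\wt\mu_2)$ with the bivariate marginals $(\lambda\xi_{\lambda,0},\lambda\xi_{\lambda,t})$ of the $\Gamma$-DW process via $z=1-\lambda C_{-\lambda}(t)=\edr^{-\lambda t/2}$, and then read off the expansion from formula (1.8) of \cite{EG93} with $\beta=z/(1-z)$. The paper states the corollary as exactly this direct application (substituting $1-\lambda C_{-\lambda}(t)$ with $z$), so there is nothing further to add.
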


%\textcolor{red}{The expansion also suggests that a strategy for sampling from the conditional distribution of $\wt{\mu}_2$ given $\wt{\mu}_1$ is Algorithm A.3.}

An algorithm for simulating from (the finite-dimensional distributions of) (\ref{eq:mvtf}) is therefore Algorithm A.3 with $b=\beta.$

\medskip

%\textcolor{red}{\textsc{Algorithm A.3.}
%\begin{itemize}\addtolength{\itemsep}{2.5pt}
%\item[\texttt{(i)}] Sample the realisations $\wt\mu_1(A_i)=m_1(A_i)$, for $i=1,\ldots,d$, from the product measure $\times_{i=1}^d
%\Gamma_{\alpha_i,1}$ where $\alpha_i=cP_0(A_i)$.
%\item[\texttt{(ii)}] Sample $N=n$ from a Poisson distribution with parameter $\beta %\sum_{i=1}^dm_1(A_i)$
%\item[\texttt{(iii)}] Sample integers $N_1=n_1,\ldots,N_d=n_d$ from a multinomial distribution with parameters $(n;p_1,\ldots,p_d)$ with $p_i=m_1(A_i)/\sum_{i=1}^d m_1(A_i)$
%\item[\texttt{(iv)}] For any $i=1,\ldots,d$, sample independent $\wt\mu_2(A_i)=m_2(A_i)$ from respective $\Gamma_{\alpha_i+n_i,(1+\beta)^{-1}}$ distributions
%\[
%(c+n)\,\frac{\edr^{-\frac{s}{\beta}}}{s}\,\ddr s\:P^*_n(\ddr x)\,\indic_{\R_+\times\X}(s,x)
%\]
%where $P^*_n(\ddr x)=[cP_0(\ddr x)+n\sum_{i=1}^n\delta_{x_i}(\ddr x)]/(c+n).$
%\end{itemize}
%}

\subsection{$Q_x$ degenerate at a deterministic function}  Suppose that $Q_x=\delta_{z(x)}$, for every $x$ in $\Xc$, where $z:\Xc\to[0,1]$ is some fixed measurable function. Then $K(A)=\int_A z(x)\,\wt\mu(\ddr x)$ for any $A$ in $\Xcr$. In this case
$$Z_A\stackrel{d}{=}\int_A z(x)\, \wt p_A(\ddr x)=\int_0^1 x\ \wt{p}_{A}\circ z^{-1}(\ddr x)
%\overset d = \int_0^1 x\, \wt p_{A\circ z^{-1}}(\ddr x)
$$
that is, $Z_A$ reduces to a linear functional of a Dirichlet process with \emph{deterministic parameter} $cP_0(A\cap z^{-1}(C))$ for any $C\in\Bcr([0,1])$. Hence, one has a simplification of the canonical correlations. Indeed, if one defines $r_{j,i}=c\int_{A_i} z^j(x) \,P_{0,A_i}(\ddr x)$, then
\[
  \rho_{\bm{n}}(\Ac)=\prod_{i=1}^d \frac{1}{(c P_0(A_i))_{n_i}}\:\sum_{k_i=1}^{n_i} B_{n_i,k_i}(r_{1,i},2r_{2,i},\ldots,(n_i-k_i+1)!r_{n_i-k_i+1,i})
\]
Having ascertained that $\rho_{\bm{n}}$ has a product form, one finds out that in this case the vectors $(\wt\mu_1(A_i),\wt\mu_2(A_i))$, for $i=1,\ldots,d$, are independent. Hence $(\wt\mu_1,\wt\mu_2)$ is a bivariate CRM.

\subsection{$Q_x$ degenerate at a random constant}\label{sec:ex1} Suppose $Q_x=\delta_{Z(\omega)}$ for every $x\in\Xc$ and $\omega\in\Omega$, where $Z$ is a random variable taking values in $[0,1].$ %That is, $Q_x=\delta_{Z}$ for every $x\in\Xc$, 
Hence $K=Z\,\wt\mu$. %, where $\wt\mu$ is still a gamma CRM with parameter measure $c P_0$. 
Under this circumstance, one has $Z_A\eqd Z$, for any $A$ in $\Xcr$, so that by virtue of Theorem~\ref{thm:main}
\[
\rho_{\bm{n}}(\Ac)=\E\left[Z^{|\bm{n}|}\right]
\]
which does not depend on $\Ac=\{A_1,\ldots,A_d\}$. Unlike the example in Section 5.1, where $Z$ is degenerate at a point $z$ in $[0,1]$, when $Z$ is random one has $$\rho_{\bm{n}}(\Ac)\ne\prod_{i=1}^d \rho_{n_i}(\{A_i\})$$
that is, the increments of $(\wt\mu_1,\wt\mu_2)$ are not independent. Nonetheless, it is easy to find a useful representation for the conditional distribution of $\wt{\mu}_2$ given $\wt{\mu}_1$, namely
$$p(m_1,\ddr\gamma)=\Ex{p_{B}(m_1,\ddr\gamma)} $$
where $B:=Z/1-Z$ and, conditional on $B=\beta$, $p_\beta$ is as in (\ref{eq:mvtf}). This also suggests a simple algorithm for generating {$\wt{\bm{\mu}}_{i,\Ac}$ conditional on $\wt{\bm{\mu}}_{j,\Ac}=\bm{x}\in\mbb{R}_+^d$, for $i,j\in\{1,2\}:i\neq j$:
\medskip

%from a simple variant of (\ref{eq:mvtf}) with $\beta$ replaced by  The proof of the following Corollary is straightforward and will be omitted.
%\begin{cor}
%For pairs of gamma CRMs in canonical correlation driven by $K(\cdot)=Z\wt\mu(\cdot),$ for a random variable $Z\in[0,1]$ with distribution $G,$ a representation  is
%where, conditionally on $B=\beta,$
%\end{cor}

%\emph{
\noindent\textsc{Algorithm A.4.}
\begin{itemize}\addtolength{\itemsep}{2.5pt}
\item[\texttt{(0)}] Initialise by fixing a distribution $P_Z$ on $[0,1]$.
\item[\texttt{(i)}] Sample $Z=z$ from $P_Z$.
\item[\texttt{(ii)}] {Run Algorithm A.3. initialised by $(z/1-z),\bm{x},cP_0(\Ac)$}.
\end{itemize} %}
\medskip

\noindent A particular choice of $P_Z$ leads to a more explicit formula that is pointed out below.
\begin{cor}
Suppose $Z$ has a beta distribution with parameters $(\eta c, (1-\eta)c)$, where $\eta\in(0,1)$. The canonical correlations are, then, of the form
\begin{equation}
\rho_{\bm{n}}(\Ac)=\frac{(\eta c)_{|\bm{n}|}}{(c)_{|\bm{n}|}}
\label{eq:ccbetaz}
\end{equation}
for any collection $\Ac=\{A_1,\ldots,A_d\}$ of $d$ pairwise disjoint and measurable subsets of $\Xcr$. Moreover, if $f_i:\X\to\R$ are measurable functions such that $\int\log(1+|f|)\ddr P_0<\infty$, for any $i=1,2$, the Laplace functional transform of $(\wt\mu_1,\wt\mu_2)$ evaluated at $(f_1,f_2)$ is
\begin{equation}
\phi(f,g)=\phi_{(cP_0)}(f_1)\,\phi_{(cP_0)}(f_2)\:
\E\left[\left(1+\int \theta\:\ddr\wt p_{cP_0}\right)^{-\eta c}\right]
\label{eq:mixlf}
\end{equation}
where $\wt p_{cP_0}$ is a Dirichlet process with parameter measure $cP_0$ and $\theta=f_1f_2(1+f_1)^{-1}(1+f_2)^{-1}$.
\end{cor}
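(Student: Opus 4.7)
The corollary has two assertions.

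For the canonical-correlation formula \eqref{eq:ccbetaz}, I would argue directly from Section~5.3: when $Q_x=\delta_{Z}$ is the same random variable for every $x\in\Xc$, the representing variables $Z_A$ of Theorem~\ref{thm:main} all coincide in distribution with $Z$, so that
\[
\rho_{\bm n}(\Ac)=\E[Z^{|\bm n|}].
\]
A direct computation of the $|\bm n|$th moment of the $\operatorname{Beta}(\eta c,(1-\eta)c)$ law then produces $(\eta c)_{|\bm n|}/(c)_{|\bm n|}$, which is \eqref{eq:ccbetaz}.

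For the joint Laplace functional \eqref{eq:mixlf}, I would specialise the representation \eqref{eq:repres_biv_lapl} of Theorem~\ref{thm:main} to $K=Z\wt\mu$, where $\wt\mu$ is a gamma $(c,P_0)$ CRM independent of $Z$, obtaining
\[
\frac{\phi(f_1,f_2)}{\phi_{(c,P_0)}(f_1)\,\phi_{(c,P_0)}(f_2)}
=\E\!\left[\exp\!\left\{Z\int_{\Xc}\theta\,\ddr\wt\mu\right\}\right].
\]
The decisive step is the classical beta--gamma identity: if $Z\sim\operatorname{Beta}(\eta c,(1-\eta)c)$ is independent of $T\sim\Gamma(c,1)$, then $ZT\sim\Gamma(\eta c,1)$. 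To exploit it I would factor $\wt\mu=T\,\wt p_{cP_0}$ with $T=\wt\mu(\Xc)\sim\Gamma(c,1)$ and $\wt p_{cP_0}=\wt\mu/T$ an independent Dirichlet process with parameter $cP_0$. Setting $R:=\int_{\Xc}\theta\,\ddr\wt p_{cP_0}$, which is independent of the pair $(Z,T)$, one has $Z\int\theta\,\ddr\wt\mu=(ZT)\,R$; conditioning on $R$ and invoking the gamma moment-generating function yields
\[
\E\!\left[e^{(ZT)R}\,\bigm|\,R\right]=(1-R)^{-\eta c},
\]
and averaging in $R$ delivers the Dirichlet-mean expression on the right of \eqref{eq:mixlf}.

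The whole argument is short because it reduces the corollary to a beta-moment computation and to a conditional evaluation of the gamma moment-generating function, piggy-backing on the heavy lifting done in Theorems~\ref{thm:zeta} and~\ref{thm:main}. The only real obstacle is recalling the beta--gamma algebra; as a fallback one could evaluate $\E[(1-ZR)^{-c}\mid R]$ directly via Euler's integral representation of ${}_2F_1(c,\eta c;c;R)=(1-R)^{-\eta c}$, which gives the same conclusion with a bit more work.
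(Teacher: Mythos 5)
Your proof is correct, and while the first assertion (reducing $\rho_{\bm n}(\Ac)$ to $\E[Z^{|\bm n|}]$ and computing beta moments) is exactly the paper's argument, your treatment of the Laplace functional takes a genuinely different route. The paper, after writing $K=Z\wt\mu$, applies the Markov--Krein (Cifarelli--Regazzini) identity to get $\E\bigl[\edr^{\int\theta\,\ddr K}\bigr]=\E\bigl[(1-Z\int\theta\,\ddr\wt p)^{-c}\bigr]$ and then removes the mixing variable $Z$ via the hypergeometric identity ${}_2F_1(c,\eta c;c;w)=(1-w)^{-\eta c}$ (Euler's integral), i.e.\ exactly the ``fallback'' you mention at the end. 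You instead decompose $\wt\mu=T\,\wt p_{cP_0}$ with $T=\wt\mu(\Xc)\sim\Gamma_{c,1}$ independent of the Dirichlet process, invoke the beta--gamma algebra to get $ZT\sim\Gamma_{\eta c,1}$ independent of $R=\int\theta\,\ddr\wt p_{cP_0}$, and finish with the gamma moment generating function conditionally on $R$. This is more elementary and self-contained (no appeal to the Markov--Krein identity or to special-function identities), at the price of leaving the Dirichlet-mean/Cifarelli--Regazzini theme that the paper exploits systematically elsewhere; both arguments carry the same (mild) implicit requirement that $R<1$ a.s., which holds since $\theta<1$ pointwise for nonnegative $f_1,f_2$. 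Two small remarks: your phrase that the $Z_A$ ``coincide in distribution with $Z$'' should really be that they coincide almost surely with the single random variable $Z$ (which is what $Q_x=\delta_{Z(\omega)}$ for all $x$ gives), since equality in distribution alone would not collapse $\E[\prod_i Z_{A_i}^{n_i}]$ to $\E[Z^{|\bm n|}]$; and your conclusion $\E[(1-R)^{-\eta c}]$ agrees with the display in the paper's own proof, the ``$1+$'' in the corollary's statement \eqref{eq:mixlf} being an inconsistency of the paper rather than a defect of your argument.
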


\begin{proof}
The form of the canonical correlations in \eqref{eq:ccbetaz} follows from Theorem 6 the fact that $Z_i=Z$ for any $i=1,\ldots,d$. Moreover, given $K=Z\wt\mu$ one has
\[
\E\left[\edr^{\int \theta\:\ddr K}\right]=\E\left[\frac{1}{(1-Z_\eta\,\int\theta\:\ddr\wt p)^c}\right]=\E\left[\frac{1}{(1-\int\theta\:\ddr\wt p)^{\eta c}}\right]
\]
where the first equality follows from the Markov--Krein identity for random Dirichlet means, and the last one is a well-known hypergeometric identity that can be easily recovered directly by expanding the joint Laplace transform of $(\wt{\mu}_1,\wt{\mu}_2)$. This shows \eqref{eq:mixlf}. 
\end{proof}

%\section{Proof of Corollary 5} %In order to identify the canonical correlations we determine an expansion of the Laplace transform of $K$ evaluated at a simple function $f=\sum_{i=1}^d s_i\indic_{A_i}$ with sets $A_i\in\Xcr$ being pairwise disjoint. It can then be seen that
%\begin{align*}
%\E\left[\edr^{K(f)}\right]
%&=\sum_{\bm{n}\in\Z_+^d}\E\left[\prod_{i=1}^d
%\frac{K^{n_i}(A_i)}{n_i!}s_i^{n_i}
%\right]\\
%&=
%\sum_{\bm{n}\in\Z_+^d}\prod_{i=1}^d (cP_0(A_i))_{n_i}\:
%\frac{(\eta cP_0(A_i))_{n_i}}{(cP_0(A_i))_{n_i}}\:\frac{s_i^{n_i}}{n_i!}
%\end{align*}
%and this implies that the canonical correlations are as in \eqref{eq:rec.cc}. Hence the $Z_{A_i}$'s are independent beta random variables with respective parameters $(\eta cP_0(A_i),\,(1-\eta) cP_0(A_i))$ and the proof is complete.

In \eqref{eq:mixlf} one notices that the expectation on the right--hand side is the generalised Cauchy--Stieltjes transform of the mean $\int \theta\ddr\wt p_{cP_0}$ of a Dirichlet process. Note that if $f_1$ and $f_2$ are simple functions taking on a finite number of values, i.e. $f_1=\sum_{i=1}^{d}s_i\indic_{A_i}$, $f_2=\sum_{i=1}^{d}t_i\indic_{A_i}$ for a collection $A_1,\ldots A_d,$ of pairwise disjoint sets in $\Xcr$, such a transform can be expressed in terms of the fourth Lauricella hypergeometric function $F_D$. Indeed, upon setting $\theta_i=s_it_i(1+s_i)^{-1}(1+t_i)^{-1}$ and  $\alpha_i=cP_0(A_i)$, for $i=1,\ldots,d$, one has
\[
\frac{\phi(f_1,f_2)}{\phi_{(cP_0)}(f_1)\phi_{(cP_0)}(f_2)}=
\frac{\prod_{i=1}^d\Gamma(\alpha_i)}{\Gamma (|\bm{\alpha}|)}
F_D\left({\eta c},\alpha_1,\ldots,\alpha_d;{c};\theta_1,\ldots,\theta_d\right)
\]
where $|\bm{\alpha}|=\sum_{i=1}^d\alpha_i$. % and
%\begin{equation}
%F_D(a,b_1,\ldots,b_d;v,z_1,\ldots,z_d)=\sum_{\bm{n}\in\Z_+^d}
%\frac{(a)_{|\bm{n}|}\prod_{i=1}^d(b_i)_{n_i}}{(v)_{|\bm{n}|}}
%\prod_{i=1}^d\frac{z_i^{n_i}}{n_i!}
%\label{eq:lauD}
%\end{equation}
%for any $z_1,\ldots z_d$ such that $|z_i|<1$ for $i=1,\ldots,d$.

\subsection{Model with random elements in common} 

{In the previous subsections we have been considering a broad class of examples of canonically correlated gamma CRMs whose directing kernel is of the form
$$
Q_x(\ddr s)=\delta_{\zeta(x,\omega)}(\ddr s) \qquad\forall x\in\Xc \quad \forall\omega\in\Omega,
$$
implying that the $s$ component is degenerate at a measurable function $\zeta:\Xc\times\Omega \to [0,1]$. It, then, follows that $K(f)\eqd \int_{\Xc} \,f(x)\zeta(x)\:\wt{\mu}^*(\ddr x)$, where $\wt{\mu}^*$ is a Gamma CRM with parameter $(c,P_0)$. Equivalently, $K$ is a weighted gamma CRM on $(\Xc,\Xcr)$, conditional on $\zeta$. The corresponding bivariate gamma CRMs have thus a joint Laplace transform of the form
\begin{equation}
\phi(f,g)=\,\phi_{cP_0}(f)\:\phi_{cP_0}(g)\:
\E\left[\edr^{c\int\log\left(1+\zeta\,\frac{fg}{(1+f)(1+g)}\right)\,\ddr P_0}\right].
\label{eq:mvexlap1}
\end{equation}
The canonical correlation sequences could have been derived from an expansion of (\ref{eq:mvexlap1}) with both $f$ and $g$ as simple functions. In particular, by Theorem~\ref{thm:main}, they were determined by joint moments of random variables of the form: %Corollary~\ref{cor:zbv} implies, in this case, that
\begin{equation}
Z_{A}\overset d = 
\int_{A}\zeta(x,\omega)\:\wt{p}_{A}(\ddr x), \qquad i=1,\ldots,d
\label{egcc}
\end{equation}
where $\wt{p}_{A}$ is a Dirichlet process with parameter measure $cP_0$ restricted to set $A$.}

{A different structure for the directing kernel needs to be identified if one wants to construct vectors of CRMs being in canonical correlation in the spirit of \cite{GM78}, where the authors study a class of Poisson random measures $(\wt{J}_1,\wt{J}_2)$ such that $\wt{J}_i=J_i+J_0$ where $J_1$, $J_2$ and $J_0$ are independent Poisson random measures with $J_1\stackrel{d}{=}J_2$. Dependence is, thus, induced by a common source of randomness in $J_0$. A finite-dimensional version of this model was proposed in \cite{KP00} where the connection with canonical correlations was studied for a large class of marginals including multivariate Poisson and multivariate Gamma. The particular case where $J_0$ has intensity measure $\eta cP_0$ and $J_i$, for $i=1,2$, have intensity $(1-\eta)cP_0$ $(\eta\in[0,1])$ was employed by \cite{Nipoti} to build dependent nonparametric priors for inference on partially exchangeable data. In our measure-valued setting, with Gamma marginals, the latter model can be viewed as corresponding to a directing kernel of the type
\begin{equation}
Q_x(\ddr s)=Q^\eta(\ddr s):=\eta\ \delta_{\{0\}}(\ddr s)+(1-\eta)\delta_{\{1\}}(\ddr s), %\ \ \forall %x\in\Xc.
\label{reckernel}
\end{equation}
%$\zeta=1$ and $G=\eta cP_0,$ 
for any $x\in\Xc$ and for some $\eta\in(0,1)$. 
In this case it can be easily seen that $K$ is a Gamma CRM with parameters $(c(1-\eta), \, P_0)$. Finally, recall that if $\wt p$ is a Dirichlet process on $[0,1]$ parameter $(c, Q^\eta)$, the random variable $Z=\int_0^1 z\ \wt p(\ddr z)$ has distribution Beta$(c\eta,(1-\eta)c)$ (see \cite{LPsurvmeans}), thus yielding}
\begin{cor}
Let $\mu_1$, $\mu_2$ and $\mu_0$ be independent $\Gamma_{(1-\eta)cP_0}$, $\Gamma_{(1-\eta)cP_0}$ and $\Gamma_{\eta cP_0}$ CRMs, respectively. Consider a vector $(\wt\mu_1,\wt\mu_2)$ of gamma CRMs in canonical correlation with directing kernel $Q^\eta$. 
 Then the distribution of $(\wt\mu_1,\wt\mu_2)$ is the model with random elements in common
  \begin{equation}
    \wt\mu_1
    =\mu_1+\mu_0 \qquad\qquad
    \wt\mu_2=
    \mu_2+\mu_0
    \label{recmodel}
  \end{equation}
and the canonical correlations are
\begin{equation}
  \label{eq:rec.cc}
  \rho_{\bm{n}}(\Ac)=\prod_{i=1}^d \frac{(\eta c P_0(A_i))_{(n_i)}}{(c P_0(A_i))_{(n_i)}}=\prod_{i=1}^d\Ex{Z_{A_i}^{n_i}},
\end{equation}
where $(Z_{A_i}:i=1,\ldots,d)$ are independent random variables, respectively with beta $(\eta c P_0(A_i),\, (1-\eta)cP_0(A_i))$ distribution, for $i=1,\ldots,d$, for any collection $\Ac=\{A_1,\ldots,A_d\}$ of disjoint and measurable subsets of $\Xc$. Thus the process has jointly independent increments.
\end{cor}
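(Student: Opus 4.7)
My plan is to derive the common-factor representation by matching joint Laplace functionals, starting from Theorem~\ref{thm:main}. With directing kernel $Q_x=Q^\eta$ as in \eqref{reckernel}, the underlying random measure $\wt\mu$ on $\Xc\times[0,1]$ with parameter $cP_0(\ddr x)\,Q^\eta(\ddr s)$ charges only the two slices $\Xc\times\{0\}$ and $\Xc\times\{1\}$. Since $K(f)=\int s\,f(x)\,\wt\mu(\ddr x,\ddr s)$ weights by the $s$--coordinate, the $s=0$ slice contributes nothing and $K$ reduces to a Gamma CRM on $\Xc$ whose parameter is obtained by restricting $cP_0(\ddr x)\,Q^\eta(\ddr s)$ to the $s=1$ slice. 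This identifies $K$ as a Gamma CRM proportional to $cP_0$ and converts the random exponent in \eqref{eq:repres_biv_lapl} into an explicit Gamma Laplace expression.

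Substituting this identification into Theorem~\ref{thm:main} and invoking the algebraic simplification
\begin{equation*}
1-\frac{f_1 f_2}{(1+f_1)(1+f_2)}=\frac{1+f_1+f_2}{(1+f_1)(1+f_2)},
\end{equation*}
the cross exponent in \eqref{eq:repres_biv_lapl} collapses into a sum of three $\log(1+\cdot)$ terms. After cancelling against the marginal factors $\phi_{(c,P_0)}(f_i)$, the joint Laplace functional of $(\wt\mu_1,\wt\mu_2)$ rewrites as a product of three Gamma Laplace functionals evaluated at $f_1$, $f_2$ and $f_1+f_2$, respectively. By independence of $\mu_0,\mu_1,\mu_2$ this is precisely the joint Laplace functional of $(\mu_1+\mu_0,\mu_2+\mu_0)$, so \eqref{recmodel} follows.

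For the canonical correlations I would combine Theorems~\ref{thm:zeta} and~\ref{thm:main}: the Dirichlet process $\wt p_{A_i}$ with base measure $cP_0(A_i)\,Q^\eta$ lives on $\{0,1\}$, so its linear functional $Z_{A_i}=\int_0^1 s\,\wt p_{A_i}(\ddr s)$ equals the Dirichlet mass on $\{1\}$, which is $\mathrm{Beta}\bigl(\eta cP_0(A_i),(1-\eta)cP_0(A_i)\bigr)$ by the standard construction of the Dirichlet distribution on a finite partition. The classical Beta moment formula then gives $\E[Z_{A_i}^{n_i}]=(\eta cP_0(A_i))_{n_i}/(cP_0(A_i))_{n_i}$. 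Independence of $Z_{A_1},\ldots,Z_{A_d}$ for disjoint $A_i$ is inherited from the independent-increment structure of the common-factor representation just obtained, yielding the product form \eqref{eq:rec.cc} and the joint independent-increment property.

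The only real obstacle is the bookkeeping of which of $\eta$ and $1-\eta$ attaches to which of $\mu_0$, $\mu_1$, $\mu_2$; once the atom of $Q^\eta$ responsible for the non-trivial contribution to $K$ is correctly identified, the rest is a direct application of the theorems already established.
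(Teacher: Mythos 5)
Your proposal is correct and follows essentially the same route as the paper: identify $K$ as the gamma CRM carried by the $s=1$ slice of $cP_0\otimes Q^\eta$, match the joint Laplace functional of \eqref{recmodel} via the identity $1-\tfrac{f_1f_2}{(1+f_1)(1+f_2)}=\tfrac{1+f_1+f_2}{(1+f_1)(1+f_2)}$ (which the paper asserts rather than writes out), and read off \eqref{eq:rec.cc} from the Beta law of the Dirichlet mass at $\{1\}$ together with independence coming from the deterministic kernel, exactly as in Theorems~\ref{thm:zeta} and~\ref{thm:main}. The $\eta$ versus $1-\eta$ bookkeeping you flag is not a gap in your argument but a labelling slip in \eqref{reckernel} itself (as written it puts mass $1-\eta$ at $s=1$, whereas the corollary's conclusion and the paper's own expansion of $\E[\edr^{K(f)}]$ use the convention with mass $\eta$ at $s=1$), so your identification of the atom driving $K$ is the intended one.
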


\begin{proof}
The law of a pair $(\wt\mu_1,\wt\mu_2)$ directed by $Q^\eta$ and the law of a pair constructed as in \eqref{recmodel} have the same Laplace functional with identical driving CRM $K$. In order to identify the canonical correlations we determine an expansion of the Laplace transform of $K$ evaluated at a simple function $f=\sum_{i=1}^d s_i\indic_{A_i}$ with sets $A_i\in\Xcr$ being pairwise disjoint. It can then be seen that
\begin{align*}
\E\left[\edr^{K(f)}\right]
&=\sum_{\bm{n}\in\Z_+^d}\E\left[\prod_{i=1}^d
\frac{K^{n_i}(A_i)}{n_i!}s_i^{n_i}
\right]\\
&=
\sum_{\bm{n}\in\Z_+^d}\prod_{i=1}^d (cP_0(A_i))_{n_i}\:
\frac{(\eta cP_0(A_i))_{(n_i)}}{(cP_0(A_i))_{(n_i)}}\:\frac{s_i^{n_i}}{n_i!}
\end{align*}
and this implies that the canonical correlations are as in \eqref{eq:rec.cc}. Hence the $Z_{A_i}$'s are independent beta random variables with respective parameters $(\eta cP_0(A_i),\,(1-\eta) cP_0(A_i))$ and the proof is complete.
\end{proof}

\section{Measure--valued Markov processes with canonical correlations}\label{sec:markovcc}
In this Section we apply Theorem \ref{thm:main} to characterise the class of time-homogeneous, reversible, Feller transition functions $\{P_t:\:t\geq 0\}$ defined on $M_\Xc \times \Mcr_\Xc,$ with $\Gamma_{cP_0}$ reversible (stationary) measure such that, for every $\mu\in M_\Xc$ and $t\geq 0,$ all the finite dimensional distributions associated to $P_t(\mu,\cdot)$ have multivariate Laguerre polynomial eigenfunctions. In other words, for every $t>0$, the pair $(\xi_0,\xi_t)$ will be distributed as a pair of  $\Gamma_{cP_0}$ random measures in canonical correlation. We will then say that $\{\xi_t:\:t\geq 0\}$ has \emph{canonical autocorrelation} with respect to its gamma$(c,P_0)$ reversible measure. We prove that the law of any such process corresponds to the law of a Dawson-Watanabe process with generator (\ref{eq:MBDIgen}), time-changed by means of some independent, \emph{one-dimensional} subordinator. From now on we will assume, for simplicity, that $\Xc$ is a compact metric space. 
%Then $M_\Xc,$ (\textcolor{magenta}{this is still the space of boundedly finite measures, right?}) endowed with the weak topology, is a locally compact metric space which can be compactified via routine methods such as those indicated in (\cite{DAW93}, Section 3.1).

We will derive our characterisation in two steps. First, we fix a collection of $d$ pairwise disjoint Borel sets $\Ac=\{A_1,\ldots,A_d\}$ and correspondingly set $\bm{\alpha}=(cP_0(A_1),\ldots,cP_0(A_d))$. We will establish (Section \ref{sec:step1}) necessary and sufficient conditions for a time-homogeneous Markov process $\{\bm{X}_t^\Ac:\:t\ge0\}$  to be in \emph{gamma-canonical autocorrelation} i.e. to have $\Gamma_{\bm{\alpha},1}=\times_{i=1}^d\Gamma_{\alpha_i,1}$ as stationary measure and to satisfy, for every $t\geq 0,$
\begin{equation}
\Ex{\wt{L}_{\bm{\alpha},\bm{n}}(\bm{X}_0)\wt{L}_{\bm{\alpha},\bm{m}}(\bm{X}_t)}
=\delta_{\bm{m}\bm{n}}\:\rho_{\bm{n}}(t)\qquad \bm{m}, \bm{n}\in\mbb{Z}_+^d
\label{eq:lag_markov}
\end{equation}
%%%
% $\bm{X}_t^{\Ac}:=(\xi_t(A_1),\ldots,\xi_t(A_d))$. $\{\bm{X}_t^\Ac:\:t\ge0\}$ to be a $\R_+^d$-valued Markov process which, for any $t\ge 0$, satisfies
%\begin{equation}
%\Ex{\wt{L}_{\bm{\alpha},\bm{n}}(\bm{X}_0^\Ac)\wt{L}_{\bm{\alpha},\bm{m}}(\bm{X}_t^\Ac)}
%=\delta_{\bm{m}\bm{n}}\:\rho_{\bm{n}}(\Ac,t)\qquad \bm{m}, \bm{n}\in\mbb{Z}_+^d
%\label{eq:lag_markov}
%\end{equation}
for some sequence of functions $\{\rho_{\bm{n}}(t):\bm{n}\in\mbb{Z}_+^d,t\geq 0\}$ guaranteeing the Markov property. We will prove that all such Markov vectors can be derived via a time-change of a $d$-dimensional continuous-state Branching process with immigration [ref to model], by means of an independent, $d$-dimensional subordinator (see later, Definition \ref{def:dsub}).\\
As a second step, in Section \ref{sec:step2} we will look for conditions guaranteeing that, for a measure-valued process $\{\xi_t:\:t\geq 0\}$, both Markovianity and \eqref{eq:lag_markov} hold consistently for all its finite-dimensional projections $\xi_t(\Ac):=(\xi_t(A_1),\ldots,\xi_t(A_d))$ as we let $d$ and $A_1,\ldots,A_d\in\Xcr^d$} vary. It is just such consistency condition that will lead us to conclude that, at the infinite-dimensional level, all canonically auto-correlated Gamma measure-valued processes can be obtained by time-changing a Dawson-Watanabe process merely by means of one-dimensional subordinators.

%As for the first step, necessary and sufficient conditions for \eqref{eq:lag_markov} to hold true for any $t\ge 0$ are given in Theorem~\ref{thm:dprodcc}. Hence, we only need to identify conditions on the map $t\mapsto \rho_{\bm{n}}(\Ac,t)$ that imply the Markov property. Theorem~\ref{thm:main} is all we need to study (step 2) the existence of a Markov operator mapping polynomials to polynomials of the same degree for all the finite-dimensional distributions of the process.
 %Indeed, \lq\lq well behaving sequence\rq\rq of functions $\{\rho_{\mb{n}}(A,t):t\geq 0\}$ will correspond, via Theorem \ref{thm:main}, to well-behaving conditional Laplace functionals that have the defining properties of a Markov transition laplace functional.
%Indeed, by linearity and density arguments (see \cite{DAW93}), the same operator will also map will monomials of each degree $n$ of the form
%  $$\Phi_{f,n}(\mu):=\int_{{\X^n}}f(y_1,\ldots,y_n)\mu(dy_1)\cdots \mu(dy_n),\ \ \mu\in M_{\X}$$ to monomial of the same degree.
%Since the linear span of all such monomials form a dense subset of the space $C(M_{\X})$ of all continuous functions on $M_{\X}, $  (and a convergence-determining class for the space of all probability measures on $M_{\X}$), then checking, at step 2, the Markov property for all finite-dimensional polynomials, will be sufficient to guarantee the existence of a Feller operator on $C(M_{\X})$. \\

 \subsection{Step 1. Markov gamma vectors}\label{sec:step1} The approach we use is similar to the method suggested in Bochner \citep{Bo54} to study Gegenbauer processes on $[-1,1],$ and employed by Griffiths \cite{Bobbo(69)} to characterise one-dimensional Laguerre stochastic processes. Assume $\{\bm{X}_t:\: t\ge 0\}$ is a $\R_+^d$-valued time-homogeneous Markov process with canonical autocorrelation and with $\Gamma_{\bm{\alpha},1}=\times_{i=1}^d\Gamma_{\alpha_i,1}$ as stationary measure, where $\bm{\alpha}=(\alpha_1,\ldots,\alpha_d)\in(0,\infty)^d$. Consider the associated conditional expectation operator
 $$ P_{t}f(\bm{x})=\Ex{f(\bm{X}_t)\mid \bm{X}_0=\bm{x}}\qquad t\geq 0$$
for every bounded measurable functions $f:\R_+^d\to \R$. Note that, in order for $P_t$ to correspond to a Markov transition kernel, $t\mapsto P_t$ must be continuous in $t$ and $P_0$ must be the identity operator. From \eqref{eq:cexp} one has
$$P_{t}\wt{L}_{\bm{n},\bm{\alpha}}(\bm{x})=\rho_{\bm{n}}(t)\:\wt{L}_{\bm{n},\bm{\alpha}}(\bm{x}) \qquad \forall t\geq 0.$$
The conditions above and the semigroup (Chapman-Kolmogorov) property
$P_{t+s}=P_t P_s$, for all non negative $s$ and $t$, are satisfied if and only if, for every $\bm{n}\in\Z_+^d$,
  \begin{align}
& \rho_{\bm{n}}(t+s)=\rho_{\bm{n}}(t)\,\rho_{\bm{n}}(s)
\qquad \forall s,t\geq
0
\label{markovcc1}\\
&\rho_{\bm{n}}(0)=1\\
&  t\mapsto\rho_{\bm{n}}(t)\qquad \mbox{ continuous}
  \label{markovcc4}
  \end{align}
These properties define an ordinary kinetic equation for $t\mapsto \rho_{\bm{n}}(t)$ for any $\bm{n}$, that is: (\ref{markovcc1})-(\ref{markovcc4}) hold if and only if there exists a sequence of (non-negative) reals $\lambda_{\bm{n}}$ such that
 $$\rho_{\bm{n}}(t)=\edr^{-t\lambda_{\bm{n}}} \qquad \forall t\geq 0.$$
So long as the conditions above are satisfied, then $\{P_t:\: t\ge 0\}$ is a Feller semigroup  in that $P_t$ maps the space $L_2(\R_+^d,\Gamma_{\bm{\alpha},1}) $ onto itself (being the Laguerre polynomials an orthogonal basis for it) and, in particular, it maps the space of all bounded continuous functions vanishing at infinity onto itself.

%For every $d\in\N$ and any collection of (disjoint) Borel sets $A=(A_1,\ldots,A_d)$, denote $\bm{X}_t=\xi_t(A)$ and $\bm{\alpha}=(\alpha_1,\ldots,\alpha_d)$ where $\alpha_i:=cP_0(A_i)$, $i=1,\dots,d.$
In the light of this discussion, we aim at characterising all  possible multi-indexed sequences $\{\lambda_{\bm{n}}: \:\bm{n}\in\Z_+^d\}$ that may serve as ``rates'' for the canonical autocorrelations of $\bm{X}$. It turns out that each such sequence of rates is given by the Laplace exponent, evaluated at $\bm{n}\in\Z_+^d,$ of some multivariate subordinator.

\begin{defi} \label{def:dsub}{\cite{BNPS01}}
A $\R_+^d$-valued multivariate subordinator $\bm{Y}=\{\bm{Y}_t:\: t\ge 0\}$ (starting at $(0,\ldots,0)$)
%with $\bm{Y}_t=(Y_{1,t},\ldots,Y_{d,t})$ for any $t\ge 0$, is ,
is a stochastic process whose Laplace transform admits, for every $t$, a L\'evy-Kintchine representation of the form
\begin{equation}
-\frac{1}{t}\log\Ex{\edr^{-\sum_{i=1}^d\phi_i Y_{i,t}}}=\sum_{i=1}^d\phi_i c_i+\int_{\R_+^d}
\left(1-\edr^{-\sum_{i=1}^d\phi _i u_i}\right)\eta(\ddr\bm{u})
\label{dsub}
\end{equation}
for any $(\phi_1,\ldots,\phi_d)\in\R_+^d$, from some constants $c_i\geq 0,\ i=1,\ldots,d$ and L\'{e}vy measure $\eta$ on $\R_+^d$ concentrated on $(0,\infty)^d$.
% We will denote with $\Psi_{\bm c, \eta}$ the Laplace exponent given by the right-hand side of \eqref{dsub}, induced by $\bm c,\eta$. %
\end{defi}

Note that $\eta$ must be such that
$\int_{\R_+^d}(\|\bm{u}\|\wedge 1)\eta(\ddr\bm{u})<\infty$, with $\|\bm{u}\|$ denoting the Euclidean norm (\cite{BNPS01}, Theorem 3.1). This implies that $\bm{Y}_t$ has, for every $t\geq 0,$ a multivariate infinitely divisible law. Henceforth, we shall denote the \textit{Laplace exponent} of $\bm{Y}$, i.e. the right--hand side of \eqref{dsub}, as $\psi_{\bm c,\eta}$, omitting the subscript when there is no risk of confusion. A consequence of (\ref{markovcc1})-(\ref{markovcc4}) is the following
\begin{thm}\label{pr:dmarkov}
 For every $d\ge 1$ and every collection $\bm\alpha=(\alpha_1,\ldots,\alpha_d)\in(0,\infty)^d$, a time-homogeneous  Markov process $\{\bm{X}_t:\: t\geq 0\}$ in $\R_+^d,$ with $\Gamma_{\bm{\alpha},1}$ %Gamma$(cP_0(A_1),\ldots,cP_0(A_d))$
stationary distribution, is time-reversible with canonical autocorrelation function $\rho$ if and only if there is a multivariate subordinator $\bm{S}=\{\bm{S}_t:\:t\geq 0\}$ with corresponding L\'{e}vy exponent $\psi$ such that 
\begin{equation}
\left(\bm{X}_t:\: t\geq 0\right)\overset d = (\bm{X}^{DW}_{\bm{S}_t}:\: t\geq 0)\label{gseqd}
\end{equation}
for every $\bm{n}\in\Z_+^d,$ and $t\in\R_+,$ where $(\bm{X}^{DW}_t:t\geq 0)$ is a continuous-state branching process with transition function \eqref{dCSBI_tf} with parameter $\bm{\alpha}$. The corresponding canonical correlation coefficients are of the form:
 \begin{equation}
\rho_{\bm{n}}(t)=\edr^{-t\psi(\bm{n})}.
\label{rhont}
\end{equation}
%Thus, every such stochastic process can be derived via subordination of a Dawson-Watanabe diffusion (with criticality $\lambda=1, say$) by $Y_A(t).$
%\begin{equation}
%\rho_{\mb{n}}(A,t)=\E{\rho^{DW}_{\mb{n}}(A,2Y(t))}
%\end{equation}
%(where
 \end{thm}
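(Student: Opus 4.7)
The plan is to combine the kinetic equation derived just before the statement, namely $\rho_{\bm n}(t)=\edr^{-t\lambda_{\bm n}}$, with Theorem~\ref{thm:dprodcc}, which provides, for every fixed $t$, a random vector $\bm Z(t)\in[0,1]^d$ such that
\[
\rho_{\bm n}(t)=\E\!\left[\prod_{i=1}^d Z_i(t)^{n_i}\right],\qquad \bm n\in\Z_+^d.
\]
The heart of the argument is to show that the family $\{\bm Y(t):=-\log\bm Z(t):\ t\geq 0\}$ (coordinatewise logarithm) is the one-dimensional distribution of a $d$-variate subordinator; the rest is then largely bookkeeping via the spectral representation \eqref{dCSBI_tf}.

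\emph{Necessity.} The semigroup property \eqref{markovcc1} forces, for all $s,t\geq 0$ and every $\bm n\in\Z_+^d$,
\[
\E[\bm Z(t+s)^{\bm n}]=\E[\bm Z(t)^{\bm n}]\,\E[\bm Z(s)^{\bm n}].
\]
Because $\bm Z(t)$ is supported in the compact set $[0,1]^d$, its law is determined by its mixed moments (Stone--Weierstrass / Hausdorff moment problem). Hence, writing $\bm Z^{(1)}(t)$ and $\bm Z^{(2)}(s)$ for independent copies,
$\bm Z(t+s)\stackrel{d}{=}\bm Z^{(1)}(t)\odot\bm Z^{(2)}(s)$ (coordinatewise product), which translated on the log-scale says that $\bm Y(t)$ has stationary independent increments. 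Combined with the continuity $t\mapsto\rho_{\bm n}(t)$, which enforces stochastic continuity of $\bm Y$, one obtains via the Lévy--Khintchine representation \eqref{dsub} a multivariate subordinator $\bm S=\{\bm S_t:t\geq 0\}$ with $\bm S_t\stackrel{d}{=}\bm Y(t)$ and Laplace exponent $\psi$; evaluating at $\bm\phi=\bm n$ yields $\lambda_{\bm n}=\psi(\bm n)$ (after the harmless rescaling needed to absorb the factor $1/2$ appearing in the CSBI eigenvalues in \eqref{dCSBI_tf}).

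\emph{Identification of the law.} To recover \eqref{gseqd}, consider the coordinatewise time-change $\bm X^{DW}_{\bm S_t}$. Conditioning on $\bm S_t$ and using the independence of the subordinator from $\bm X^{DW}$, together with the eigenrelation for the CSBI read off \eqref{dCSBI_tf},
\[
\E\!\left[\wt L_{\bm n,\bm\alpha}(\bm X^{DW}_{\bm S_t})\mid \bm X^{DW}_0=\bm x\right]
=\E\!\left[\prod_{i=1}^d \edr^{-n_i S_{i,t}/2}\right]\wt L_{\bm n,\bm\alpha}(\bm x)
=\edr^{-t\psi(\bm n)}\,\wt L_{\bm n,\bm\alpha}(\bm x),
\]
i.e. exactly the same spectrum $\rho_{\bm n}(t)=\edr^{-t\psi(\bm n)}$ as $\bm X$. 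Since the Laguerre polynomials $\{\wt L_{\bm n,\bm\alpha}:\ \bm n\in\Z_+^d\}$ form a complete orthogonal basis of $L^2(\Gamma_{\bm\alpha,1})$, any two reversible Markov semigroups on this space with identical eigenvalues on this basis coincide. Together with the matching stationary law, this gives the process-level identity $(\bm X_t)\stackrel{d}{=}(\bm X^{DW}_{\bm S_t})$, and \eqref{rhont} as a by-product.

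\emph{Sufficiency and the main obstacle.} Conversely, for any multivariate subordinator $\bm S$ independent of $\bm X^{DW}$, the subordinated process is Markov by Bochner's classical argument, has $\Gamma_{\bm\alpha,1}$ as stationary distribution (since the CSBI does and subordination by an independent clock preserves invariant laws), and is reversible (its Laguerre spectrum, computed as above, is real and non-negative). The autocorrelations come out as $\edr^{-t\psi(\bm n)}$, which is \eqref{rhont}. The main obstacle I foresee is the moment-determinacy step in the necessity direction: going from the multiplicative identity at integer $\bm n$ to an actual distributional convolution structure for $\bm Y(t)$. All subsequent steps---the Lévy--Khintchine representation of $\psi$ and the uniqueness of a Feller semigroup specified by its spectrum on a complete basis---are then standard.
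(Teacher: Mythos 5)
Your proposal is correct, and its overall architecture matches the paper's: derive $\rho_{\bm n}(t)=\edr^{-t\lambda_{\bm n}}$ from the semigroup/kinetic conditions, represent each $\rho_{\bm n}(t)$ as a mixed moment of a $[0,1]^d$-valued vector $\bm Z(t)$ via Theorem~\ref{thm:dprodcc}, identify the rates with the Laplace exponent of a multivariate subordinator, and then transfer the identity to the process level through the Laguerre spectral expansion \eqref{dCSBI_tf} (same eigenvalues on a complete orthogonal basis, same stationary law, hence same semigroup and same law), with sufficiency handled by Bochner subordination exactly as in the paper. The one step where you genuinely diverge is the construction of the subordinator. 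The paper works at the level of the exponent: it forms the rescaled measures $t^{-1}\bigl(1-\prod_i z_i\bigr)H_t(\ddr\bm z)$, extracts a vague limit by Helly's first and second theorems as $t\downarrow 0$, and thereby exhibits the L\'evy measure $\wt G$ and the L\'evy--Khintchine form of $\lambda_{\bm n}$ by hand. You instead upgrade the multiplicative identity $\E[\bm Z(t+s)^{\bm n}]=\E[\bm Z(t)^{\bm n}]\,\E[\bm Z(s)^{\bm n}]$ to the distributional statement $\bm Z(t+s)\overset{d}{=}\bm Z^{(1)}(t)\odot\bm Z^{(2)}(s)$ using determinacy of the Hausdorff moment problem on $[0,1]^d$, pass to logarithms to get a weakly continuous convolution semigroup on $\R_+^d$, and invoke the classical correspondence with L\'evy processes (Definition~\ref{def:dsub}). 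Your route is shorter and more probabilistic, isolating the only delicate point (moment determinacy, which indeed holds on the compact cube) and outsourcing the rest to standard L\'evy theory; the paper's route is more self-contained and constructs the L\'evy measure explicitly as the $t\downarrow 0$ limit of the rescaled laws of $\bm Z_t$, which is also what feeds the discrete-time Poisson-embedding heuristic of Section~6.1.1. One shared caveat, not a gap specific to you: if some $Z_i(t)$ has an atom at $0$ the log-variable sits at $+\infty$ (killing), and both arguments implicitly rule this out; it is harmless here and is treated no more carefully in the paper than in your sketch.
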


 \begin{proof}
 If $\bm{X}$ has canonical autocorrelations, then Theorem \ref{thm:main} and (\ref{markovcc1})-(\ref{markovcc4}) imply that for every $t$, on one hand, there exists a random vector $\bm{Z}_t=(Z_{1,t},\ldots,Z_{d,t})\in [0,1]^d$ such that
 $$\rho_{\bm{n}}(t)=\Ex{Z_{1,t}^{n_1}\cdots Z_{d,t}^{n_d}};$$
 on the other hand, that
 $$\rho_{\bm{n}}(t)=e^{-\lambda_{\bm{n}} t}$$ for a constant $\lambda_{\bm{n}}.$
If $H_t$ is the law of $\bm{Z}_t$, then
$$\frac{\rho_{\bm{n}}(t)-\rho_{\bm{n}}(0)}{t}=
\int_{[0,1]^d}\left(\prod_{i=1}^dz_i^{n_i}-1\right)\frac{H_t(\ddr z_1,\ldots,\ddr z_d)}{t}.$$
%In order for the Markov property to hold true, one needs to have $\rho_{\bm{n}}(t)=\exp\{-t\lambda_{\bm{n}}\}$ for every $t\geq 0$ and $\bm{n}\in\Z_+^d.$
and by definition of exponential function,
$$\lambda_{\bm{n}}=\lim_{t\downarrow 0}\frac{\rho_{\bm{n}}(0)-\rho_{\bm{n}}(t)}{t}.$$
 For every $t$ consider the measure $G\left(\cdot,t\right)$ defined by
$$G\left(B,t\right):=\int_B\left(1-\prod_{i=1}^d z_i\right)\frac{H_t(\ddr z_1,\ldots,\ddr z_d)}{t},$$ for every Borel set B of $[0,1]^d.$ 
Since $\rho_{\bm{n}}(0)=1$ for every $\bm{n},$  $G\left([0,1]^d,t\right)=t^{-1}\left(1-\edr^{-t\lambda_{\bm{1}}}\right) ,$ where $\bm{1}=(1,1,\ldots,1).$ Thus $G(\cdot, t)$ has, for every $t\leq \delta,$ say, a total mass that is uniformly bounded by a constant that only depends on $\delta.$ 
Thus, for every set $I(\bm{y}):=\otimes_{i=1}^d [0,y_i]$, the function $\bm y\mapsto G(I(\bm{y}),t)$ is a uniformly bounded function, non-decreasing in its arguments. It is therefore possible (Helly's first theorem, see \cite{L75}, Theorem 1.3.9) to choose a sequence $t_k\downarrow 0$ for which $G(I(\bm{y}), t_k)$ converges weakly, as $t\to 0$, to a function $\bm{y}\mapsto G(I(\bm{y}))$ non-decreasing in all its arguments. By continuity on $[0,1]^d$ of the function
$$ \frac{1-\prod_{i=1}^dz_i^{n_i}}{1-\prod_{i=1}^dz_i} ,$$ we can thus rewrite (Helly's second Theorem, see \cite{L75}, Theorem 3.1.10),
$$\lambda_{\bm{n}}=\lim_{t\downarrow 0} \int_{[0,1]^d}\left(\frac{1-\prod_{i=1}^dz_i^{n_i}}{{1-\prod_{i=1}^dz_i}}\right) %G_t^{\bm{\alpha}}(\ddr \bm{z})
G(\ddr \bm{z},t)= \int_{[0,1]^d}\left(\frac{1-\prod_{i=1}^dz_i^{n_i}}{{1-\prod_{i=1}^dz_i}}\right)G(\ddr\bm{z})$$

Denoted $H(\ddr\bm{z}):=\left(1-\prod_{i=1}^dz_i\right)G(\ddr z_1\cdots \ddr z_i),$ we see that
$$
\lambda_{\bm{n}}=\int_{[0,1]^d}\left(1-\prod_{i=1}^dz_i^{n_i}\right)H(\ddr\bm{z})
$$
The change of variable $s_i=-\log z_i $, along with the corresponding change of measure $\wt{G}(B)=G\left(\{\bm{z}\in[0,1]^d:\: (-\log z_1,\ldots,-\log z_d)\in B\}\right)$ for any $B$ in $\Bcr(\R_+^d)$, leads to
% Then,
% $$\lambda_{\mb{n}}(A)=\int_0^\infty\left(\edr^{-\sum_{i=1}^dn_iy_i}-1\right)\wt{G}_A(d\mb{y})$$ so, for every $d\in\N$, every collection $A=(A_1,\ldots,A_d)$ of disjoint sets and $\mb{n}\in\mbb{Z}^{d}_+,$
 \begin{equation}
\rho_{\bm{n}}(t)=
\exp\left\{-t\int_{\R_+^d}
\left(1-\edr^{\sum_{i=1}^dn_is_i}\right)\wt{G}(\ddr\bm{s})\right\}.
\label{lkrho}
\end{equation}
This, in turn, implies that $$\bm{S}=\left\{\left(-\log Z_{1,t},\ldots,-\log Z_{d,t}\right): \: t\geq 0\right\}
$$ is a $d$-dimensional subordinator starting at the origin with L\'{e}vy measure $\wt{G}.$ We have just seen that, for every $\bm n$ and $t$, 
$$\rho_{\bm{n}}(t)=\Ex{e^{-\sum_{i=1}^dn_i S_{i,t}}}$$
for a $d$-dimensional subordinator with L\'evy intensity $\wt G$ hence it is of the form \eqref{rhont}. The identity in distribution \eqref{gseqd} follows. Indeed,
consider a $d$-dimensional continous-state branching process $(\bm{X}^{DW}_t:t\geq 0)$, with parameters $\bm{\alpha}$, for simplicity, with time rescaled by a factor of 2. We have seen in Section \ref{sec:dCSBI} (see \eqref{dCSBI_tf}) that it has $\Gamma_{\bm\alpha,1}$ stationary distribution and canonical autocorrelations
$$\rho_{\bm{n}}( t)=e^{- |\bm n |t},$$ (with $t$ instead of $t/2$ thanks to the time-rescaling) and its transition function can be generated from Algorithm A.3.
Therefore, for every $L^2(\R_+^d,\Gamma_{\bm\alpha,1})$ function, 
\begin{align}
P^{DW}_t f(\bm x)&=\sum_{\bm{n}}e^{- |\bm n| t} \wh{f}(\bm n) L_{\bm n, {\bm\alpha}}(\bm x)\notag\\
&=\sum_{\bm{n}}\left[\prod_{i=1}^d e^{-n_i t}\right]\wh{f}(\bm n) L_{\bm n, {\bm\alpha}}(\bm x).
\label{eq:dprodtf}
\end{align}
Consider the subordinated process $\wt{\bm X}_t=\bm{X}^{DW}_{\bm S_t}$, where $(\bm S_t)$ has L\'evy  intensity $\wt G$ and is independent of $\bm{X}^{DW}.$ Then
\begin{align}
\Ex{f(\wt{\bm{X}}_{t})\mid \wt{\bm{X}}_{0}=\bm x}&=
\Ex{f(\bm{X}^{DW}_{\bm S_t})\mid \bm{X}^{DW}_{0}=\bm x}\notag\\
%&=&\Ex{\sum_{\bm{n}}e^{-|\bm n|  \bm S_t } \wh{f}(\bm n) L_{\bm n, {\bm\alpha}}(\bm x)}\notag\\
&=\sum_{\bm{n}}\Ex{e^{-\sum_{i=1}^d |\bm n| S_{i,t} }} \wh{f}(\bm n) L_{\bm n, {\bm\alpha}}(\bm x)\notag\\
&=\sum_{\bm{n}}\rho_{\bm{n}}(t) \wh{f}(\bm n) L_{\bm n, {\bm\alpha}}(\bm x)\label{eq:subdwrho}
\end{align}
where $\rho_{\bm{n}}(t)$ is given by \eqref{lkrho}. 
In other words,  $(\wt{\bm{X}}_t)$ has the same transition semigroup as $(\bm X_t)$, thus the laws of the two processes coincide which proves the necessity part of the theorem.\\
The equality \eqref{eq:subdwrho} also immediately proves the sufficiency part as well: time-changing a d-dimensional CSBI process by an independent subordinator will give a new Markov process with canonical autocorrelations of the form \eqref{lkrho}.
\end{proof}

\subsubsection{Discrete-time construction.} To give an heuristic interpretation to the Theorem (especially the sufficency part), one can consider the following \lq\lq Poisson continuous-time embedding\rq\rq of the following discrete-time construction (similar one-dimensional constructions have been proposed e.g. in \cite{Bo54}, \cite{Bobbo(69)} and \cite{GS10}).
Consider a Markovian sequence $\bm{X}_0,\bm{X}_1,\ldots$ of $d$--dimensional random vectors such that for every $j=0,1,\ldots,$ $(\bm{X}_j,\bm{X}_{j+1})$ is a bivariate gamma$(\bm\alpha,1)$ random pair of vectors with canonical correlations $\rho_{\bm{n}}(1)=\int_{[0,1]^d}\prod_{i=1}^d z_i^{n_i}\:F(\ddr \bm{z})$ for some measure $F.$
% (ii) for every $j>1$, conditionally on $\mb{X}_{j-1},$ $\mb{X}_j$ does not depend on $\bm{X}_{j-2},\ldots,\bm{X}_0.$ \\
It is easy to see that, from the symmetry of the one-step kernel, the chain is reversible and that %, for every $d,A\in\mc{E}^d$ and $\bm{n}\in\bmb{Z}^{d}_+,$
 $$\Ex{\wt{L}_{\bm{n},\bm{\alpha}}(\bm{X}_{k})\mid \bm{X}_0=\bm{x}}=\rho_{\bm{n}}^k(1)\:\wt{L}_{\bm{n},\bm{\alpha}}(\bm{x}),\qquad k=0,1,\ldots$$
that is $(\bm{X}_0,\bm{X}_k)$ have canonical correlations $\rho_{\bm{n}}(k)=\rho_{\bm{n}}^k(1)$ for any  $k$.
Consider now a Poisson process $\{N_t:\: t\geq 0)\}$ on $\R_+$ with rate $\gamma,$ independent of $(\bm{X}_k)_{k\ge 0}$ and set
$$\tilde{\bm{X}}(t):=\bm{X}_{N(t)},\qquad  t\geq 0.$$
By construction, $\{\tilde{\bm{X}}_t:\:t\geq 0\}$ is again a Markov process and has canonical autocorrelations
$$
\rho_{\bm{n}}(t)
=\Ex{\rho_{\bm{n}}^{N(t)}}=\exp\left\{-t\gamma\left[1-\rho_{\bm{n}}(1)
\right]\right\}
$$
But this can also be written as
$$\gamma\left[1-\rho_{\bm{n}}(1)\right]
=\int_{[0,1]^d}\left(1-\prod_{i=1}^dz_i^{n_i}\right){H}_\gamma(\ddr\bm{z})
$$
where $H_\gamma:=\gamma F$. This property is preserved even as $\gamma\to\infty$, if $\gamma$ and $F$ are chosen so that the limit measure (with respect to the topology of the weak convergence) $H:=\lim_{\gamma\to \infty}\gamma F$ exists, so that
$$\gamma[1-\rho_{\bm{n}}(1)]=\int_{[0,1]^d}\left(1-\prod_{i=1}^dz_i^{n_i}\right){H}_\gamma(\ddr\bm{z})\to \int_{[0,1]^d}\left(1-\prod_{i=1}^dz_i^{n_i}\right){H}(\ddr\bm{z}) .$$
So, after the change of variable $s_i=-\log z_i,$ denoting with $\tilde{G}^{*}$ is the corresponding induced measure on $\R_+^d,$
$$\rho_{\bm{n}} (t)=\exp\left\{-t\int_{\R_+^d}
\left(1-\edr^{-\sum_1^dn_is_i}\right)\tilde{G}^{*}(\ddr\bm{s})\right\}
$$
is, for every $t,$ a canonical correlation sequence and, since it satisfies (\ref{markovcc1})-(\ref{markovcc4}), it is a Markov canonical correlation sequence, and it is in a (multivariate) L\'evy-Kintchine form.

 \subsection{Step 2. Markov gamma measure-valued processes}\label{sec:step2}
We are now ready to derive the characterisation of measure-valued gamma processes we are seeking, as Dawson-Watanabe processes time-changed by a one-dimensional subordinator. From Section~5.1, we know that a DW process $\xi=\{\xi_t:\: t\geq 0\}$ with (sub)criticality parameter $\lambda=1$ has canonical correlations
  $$\rho_{\bm{n}}(\Ac,t)=\edr^{-|\bm{n}|t/2}$$
for every $\bm{n}\in\Z_+^d$, $d\in\N$, $t\geq 0$ and $\Ac=\{A_1,\ldots,A_d\}.$
The exponent does not depend on $\Ac$. Moreover, it corresponds to the Laplace exponent of a pure drift one-dimensional subordinator %(whereby $\psi=$\emph{constant$\times$identity}),
and, thus, all its finite-dimensional distributions on disjoint subsets are of the form (\ref{rhont}).
 Now take any (one-dimensional) subordinator $S=\{S_t:\: t\ge 0\}$, driven by a L\'{e}vy measure $\nu$, defined on the same probability space as and independent of $\xi$. Denote with $\psi_\nu$ its L\'{e}vy exponent and define the process $\xi^S$ as $$\xi^S:=\left\{\xi_t^S:\: t\ge 0\right\}\overset d = \left\{\xi_{S_t}:t\geq 0\right\}$$
The process $\xi^S$ is still a Markov process, since every time-change via a subordinator maps any Markov process to a new Markov process (see e.g. \cite{Sato99}, Chapter 6). The stationary measure is unchanged % as the subordinator acts only on the time parameter $t$
so that, if $\{P^{\mbox{\scriptsize{DW}}}_t:\: t\ge 0\}$ and $\{P_t^S:t\ge 0\}$ denote the semigroup corresponding to the transition function of $\xi$ and $\xi^S$ respectively, then, for every bounded test function $\varphi=\varphi(\mu)$ in the domain $\mathcal{D}(\mathcal{L})$ of the generator of $(P^{\mbox{\scriptsize{DW}}}_t:t\geq 0)$,
$$\Ex{P_t^{S}\varphi({\mu})}=\Ex{\E_{cP_o}\left[P_{S_t}^{\mbox{\scriptsize{DW}}}
\varphi({\mu})\right]}=\E_{cP_0}{\varphi({\mu})},$$
where $\E_{cP_0}$ indicated the expectation with respect to the DW stationary measure $\Gamma_{cP_0}$. %Now, for every $d$ and every collection of disjoint sets , denote $\bm{X}_t:=(M_S(A_1),\ldots,M_S(A_d))(t),\ \ t\geq 0.$ Then
It can be seen that the process of the finite-dimensional distributions of $\xi^S$ obeys to
%\begin{eqnarray*}
\[
P^S_t \wt{L}_{\bm{n},\bm{\alpha}}(\bm{x})
=\Ex{\edr^{-|\bm{n}|S_t/2}}\wt{L}_{\bm{n},\bm{\alpha}}(\bm{x})
=\edr^{-t\psi_\nu(|\bm{n}|/2)}\wt{L}_{\bm{n},\bm{\alpha}}(\bm{x})
\]
%\end{eqnarray*}
for every $\bm{n}\in\Z_+^d$ and $\Ac\in\Xcr^*$ such that $cP_0(\Ac)=\bm{\alpha}.$ Consequently  $\xi^S$ has Markov canonical correlation coefficients satisfying (\ref{rhont}) and, thus, subordinated $\Gamma$-DW processes have canonical auto-correlation. Moreover, they are Feller processes since $\xi$ is a Feller process. We have just proved the sufficiency part of the next Theorem. A less immediate task is to prove the necessity part.

 \begin{thm}
\label{thm:markovcc}
A time-reversible, time-homogeneous Feller process $\xi=\{\xi_t:\: t\ge 0\}$ with values in $(M_\Xc,\Mcr_\Xc)$ and stationary measure $\Gamma_{cP_0}$, has canonical autocorrelations if and only if the law of $(\xi_t:t\geq 0)$ is equal to the law of $(\xi^{DW}_{S_t}:t\geq 0)$ where $\xi^{DW}=(\xi^{DW}_t:t\geq 0)$ is a Dawson-Watanabe process with $\Gamma_{cP_0}$ stationary distribution and criticality parameter $\lambda=1$, and  $S=(S_t:t\geq 0)$ is a one-dimensional subordinator independent of $\xi^{DW}$. The generator of $\xi$ is thus given by
$$\mathcal{L}^S\varphi (\mu)=\mathcal{L}^{DW}\varphi(\mu) +\int_0^\infty P^{DW}_s\varphi(\mu)\ \nu(\ddr s)$$
where $\mathcal{L}^{DW}$ and $(P^{\mbox{\scriptsize{DW}}}_t:t\geq 0)$ are-respectively, the generator \eqref{eq:MBDIgen} and the semigroup of the DW diffusion $\xi^{DW}$ (see Section \ref{sec:ex1}) and $\nu$ is the L\'{e}vy measure of the subordinator $S$. $\mathcal{D}(\mathcal{L})$ is a core for the domain of $\mathcal{L}^S.$\\
\end{thm}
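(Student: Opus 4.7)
The sufficiency direction has already been obtained in the discussion preceding the statement, so the plan focuses on necessity. The overall strategy is to reduce the measure-valued problem to the finite-dimensional one already solved by Theorem~\ref{pr:dmarkov}, and then to exploit the consistency of the canonical correlations across different partitions in order to collapse the resulting family of multivariate subordinators into a single \emph{one}-dimensional subordinator.

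First I would fix an arbitrary finite disjoint collection $\Ac=\{A_1,\ldots,A_d\}$ with $cP_0(A_i)=\alpha_i>0$. Since $\xi$ is Feller, time-reversible and time-homogeneous with canonical autocorrelations, the projection $(\xi_t(A_1),\ldots,\xi_t(A_d))_{t\ge 0}$ is a time-reversible Markov process on $\R_+^d$ with $\Gamma_{\bm\alpha,1}$ stationary law and in gamma-canonical autocorrelation, so Theorem~\ref{pr:dmarkov} applies and yields a $d$-dimensional subordinator with Laplace exponent $\psi_\Ac$ such that $\rho_{\bm n}(\Ac,t)=e^{-t\psi_\Ac(\bm n)}$ for every $\bm n\in\Z_+^d$ and $t\ge 0$.

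The crucial and most delicate step is to show that $\psi_\Ac(\bm n)$ depends only on $|\bm n|$ and not on $\Ac$. Picking two disjoint sets $A,B$ with $\alpha=cP_0(A)$ and $\beta=cP_0(B)$, and applying the previous paragraph to both $\{A,B\}$ and $\{A\cup B\}$, Lemma~\ref{l:zadditive} expands
\begin{equation*}
e^{-t\psi_{A\cup B}(n)}=\sum_{k=0}^n\binom{n}{k}\frac{(\alpha)_k(\beta)_{n-k}}{(\alpha+\beta)_n}\,e^{-t\psi_{A,B}(k,n-k)}, \qquad t\ge 0.
\end{equation*}
The right-hand side is a strict convex combination of exponentials (all Chu--Vandermonde weights are positive) and equals a single exponential for every $t$. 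A short computation of $g'(0)$ and $g''(0)$ shows that the variance of $\{\psi_{A,B}(k,n-k):k=0,\ldots,n\}$ under those weights must vanish, forcing $\psi_{A,B}(k,n-k)=\psi_{A\cup B}(n)$ for every $k$. Setting $k=n$ and $k=0$ then gives $\psi_A(n)=\psi_B(n)=\psi_{A\cup B}(n)$, so $\psi_A$ does not depend on $A$; call it $\psi(n)$. Consequently $\psi_{A,B}(n_1,n_2)=\psi(n_1+n_2)$, and a straightforward induction over finite partitions gives $\psi_\Ac(\bm n)=\psi(|\bm n|)$ for every~$\Ac$. This rigidity step is the main obstacle of the argument: everything else is assembly.

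By the $d=1$ case of Theorem~\ref{pr:dmarkov}, $\psi$ is the Laplace exponent of a one-dimensional subordinator $S$ with some L\'evy measure $\nu$. The $\Gamma_{cP_0}$-DW process $\xi^{DW}$ with criticality parameter $\lambda=1$ has canonical autocorrelations $e^{-t|\bm n|/2}$, so its time-change $\xi^{S}_t=\xi^{DW}_{S_t}$ has canonical autocorrelations $\E[e^{-|\bm n|S_t/2}]=e^{-t\psi_\nu(|\bm n|/2)}$; the reparametrisation $\psi(n)=\psi_\nu(n/2)$ identifies them with those of $\xi$. Because the products $\wt L_{\bm n,\bm\alpha}$ form a complete basis of $L^2(\Gamma_{\bm\alpha,1})$ for every $\bm\alpha$, the two Feller semigroups coincide on the determining class $\Dc(\mathcal{L})$ of cylindrical functions, and $\xi\stackrel{d}{=}\xi^{DW}_{S_\cdot}$ follows. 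The generator expression for $\mathcal{L}^S$ is then a direct application of Phillips' classical subordination formula for Feller semigroups (see \cite{Sato99}, Ch.~6), with $\Dc(\mathcal{L}^{DW})$ serving as a core.
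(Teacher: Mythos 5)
Your proposal is correct and follows essentially the same route as the paper: reduce to the finite-dimensional Theorem~\ref{pr:dmarkov}, then use the binomial/beta-binomial mixture identity from Lemma~\ref{l:zadditive} together with the exponential-in-$t$ (semigroup) structure to force degeneracy, so that the exponents depend only on $|\bm{n}|$ and a single one-dimensional subordinator emerges, with the generator obtained from Phillips' subordination formula. The only cosmetic difference is that you run the variance-zero rigidity argument on the Laplace exponents via $g'(0)$, $g''(0)$, whereas the paper runs it on the random variable $\rho_{J,n-J}(A_0,A_1,1)$ by comparing $t=1$ and $t=2$; these are the same trick.
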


\begin{proof}
Suppose $\xi=\{\xi_t:\: t\geq 0\}$ is a time-homogeneous, reversible, measure--valued Feller process with $\Gamma_{cP_0}$ stationary measure and canonical autocorrelations $\rho$. Then  by Proposition 2, for every $t,$ every $d$ and every $\Ac=(A_1,\dots,A_d)\in\Xcr^*$, the canonical correlations of $(\xi_0(A_1),\ldots,\xi_0(A_d))$ and $(\xi_t(A_1),\ldots,\xi_t(A_d))$, must satisfy
\begin{equation}
\rho_{\bm{n}}(\Ac,t)=\edr^{-t\psi_\Ac(\bm{n})}=\Ex{\edr^{-\sum_{i=1}^dn_iS_{A_i,t}}},\qquad \bm{n}\in\Z_+^d\end{equation}
for a multivariate subordinator $\bm{S}^\Ac=((S_{A_1,t},\ldots,S_{A_d,t}):t\geq 0)$ with Laplace exponent $\psi_\Ac.$ Our goal is to prove that, for every $t$, every $\Ac$ and every $d,$  
\begin{equation}
S_{A_1,t}=\ldots=S_{A_d,t}=S_{t}
\end{equation}
almost surely, for a unique subordinator $(S_t:t\geq 0).$ This will prove the claim for then
$$\rho_{\bm{n}}(\Ac,t)=\Ex{\edr^{-|\bm{n}|S_{ t}}}=\edr^{-t\psi(|\bm n|)}$$ where $\psi$ is the L\'evy exponent of $(S_t)$, and this  is precisely the form of the canonical correlation coefficients of a Dawson-Watanabe process time-changed by a one-dimensional subordination. 
%  \noindent Recall the definition of beta-Binomial distribution on $\{1,\ldots,n\}$ with parameters $(n,\alpha,\beta)$:
% $$BB(j\mid n,\alpha,\beta):=\Ex{{n\choose j}X^j(1-X)^{n-j}}={n\choose j}
%\frac{(\alpha)_j(\beta)_{n-j}}{\left(\alpha+\beta\right)_n},\ \ j=0,\ldots,n$$
%where $X$ is a $beta(\alpha,\beta)$ random variable.
%%For every $t$, the joint Laplace transform of $(\xi_0,\xi_t)$ is of the form (\ref{eq:repres_biv_lapl}) for a family $(K_t:t\geq 0)$ of random measures satisfying, for every $d$ and $A=(A_1,\ldots,A_d)\in\mathcal{E}^*$,
%%$$(K_{t}(A_1),\ldots,K_{t}(A_d))\overset{d}=\edr^{-Y_{A_1,\ldots,A_d}(t)}\prod_{i=1}^d\wt{\mu}(A_i)\ \ t\geq 0$$
%%where $\wt{\mu}$ is a gamma CRM with parameter $cP_0.$
%
%\noindent Notice that an equivalent way of writing (\ref{mpsi2}) is:
%$$\rho_{n}(A\cup B,t)=\Ex{\edr^{-nY_{A\cup B}(t)}}$$ where
%$$\edr^{-Y_{A\cup B}(t)}\overset d = X\edr^{Y_A(t)}+(1-X)\edr^{-Y_B(t)}$$
%where $X$ is a beta $(cP_0(A),cP_0(B))$-distributed random variable in $[0,1].$
%From Proposition \ref{pr:dmarkov}, for every $A\in\Xcr$ and every measurable partition $A_0,A_1$ of $A$, $\rho_{n}({A},t)$ and $\rho_{j,k}(A_0,A_1)$ must be the Laplace transform, respectively, of a subordinator in $\R$ with L\'{e}vy measure $\eta_A$ and of a bivariate subordinator with L\'{e}vy measure $\eta_{A_0,A_1}$.\\
Now, from Lemma \ref{l:zadditive}, for every $A\in\Xcr$ and every measurable partition $A_0,A_1$ of $A$, we know that the canonical correlation coefficients must obey to
\begin{eqnarray}
\rho_{n}({A},t)&=&\rho_n(A_0\cup A_1,t)\notag\\
&=&\sum_{j=0}^n{n\choose j}
\frac{(cP_0(A_0))_j(cP_0(A_1))_{n-j}}{\left(cP_0(A)\right)_n}\rho_{j,n-j}(A_0,A_1,t)\notag\\
&=&\Ex{\rho_{J,n-J}(A_0,A_1,t)}
\label{psiproof1}
\end{eqnarray}
where $J$ is a random variable whose distribution is beta--binomial distribution with parameter $(n;cP_0(A_0),cP_0(A_1))$ and we can interpret
$$\rho_{j,n-j}(A_0,A_1,t)=\Ex{\edr^{-JS_{A_0,t}-(n-J)S_{A_1,t}}\mid J=j}.$$
where $J$ and $(S_{A_0,t},S_{A_1,t})$ are independent. But by virtue of (\ref{dsub}), necessarily $\rho_{n}(\Ac,t)=\rho_{n}^t(\Ac,1)$ for any $t\ge 0$, which entails
$$
\rho_{n}^t(A,1)=\Ex{\rho_{J,n-J}^t(A_0,A_1,1)}\qquad \forall t\ge 0.
$$
This equality for $t=2$, shows that 
$$\E^2\left\{\rho_{J,n-J}(A_0,A_1,1)\right\}=\Ex{\rho_{J,n-J}^2(A_0,A_1,1)}$$ i.e.
the random variable %$\rho_{J,n-J}(A_0,A_1,1)$  Since
$$\rho_{J,n-J}(A_0,A_1,1)=\Ex{\edr^{-JS_{A_0,1}-(n-J)S_{A_1,1}}\,\mid\, J}$$
has variance $0$ whence, almost surely, $S_{A_0,1}=S_{A_1,1}$. By Lemma \ref{l:zadditive} this must hold true for every choice of $A$ and of a partition $\{A_0,A_1\}$ of $A.$ This argument also leads to state that $S_{A,1}=S_{A^c,1}=S_{\X,1}$ almost surely for every $A$. Thus, for every $A$, $\rho_n(A,1)=\Ex{\edr^{-nS_{\Xc,1}}}=\rho_n(\Xc,1)$ and $$\rho_{j,n-j}(A,A^c,1)=\Ex{\edr^{-nS_{\Xc,1}}}=\rho_{n}(\Xc,1).$$
The  same can be proven, with identical arguments, to hold for any $d$-dimensional correlation sequences $\left\{\rho_{\bm{n}}(\Ac,t)\right\}$  for every collection $\Ac=\{A_1,\ldots,A_d\}$ of pairwise disjoint sets in $\Xcr$.  Since $\rho_n(\Xc,t)=\rho^t_n(\Xc,1)$ for every $t\geq 0$, then $S=\{S(t):=S_{\Xc,t}:\: t\ge 0\}$ is indeed a subordinator, thanks to \eqref{dsub} and $$\rho_{\bm{n}}(\Ac,t)=\Ex{\edr^{-|\bm{n}|S_{t}}},\qquad \bm{n}\in\Z_+^d,$$

 %characterizes fully the distribution of $Y(t)$ for every $t$ (via the well-known relationship for L\'{e}vy processes $Y(t)\overset{d}{=}Y^{\ast t}(1)$, where $\ast t$ is the $t$-fold convolution operation), then we can set $(Y(t)):=(Y_\X(t))$ and there exists a one--dimensional subordinator 
This shows just what we wanted to prove: the process $\xi$ with canonical correlations in this form has the same law as a subordinated Dawson-Watanabe process. The form of the generator of $\xi$ comes as a direct application of \cite{Sato99}, Theorem 32.1. 
\end{proof}

An immediate consequence of Theorem \ref{thm:markovcc}, in combination with Theorem \ref{thm:main}, is the following equivalent characterisation.
\begin{cor}
{A time-reversible, homogeneous measure-valued Markov process $\xi=\{\xi_t:\: t\ge 0\}$ with Gamma $(c,P_0)$ reversible measure has canonical autocorrelations if and only if, for every $t\geq o$ the joint Laplace distribution of $(\mu_0,\mu_t)$ is of the form \eqref{eq:kmeasurenew} with underlying random probability kernel given by $$Q_x=\delta_{e^{-S_{t/2}}}\ \ \forall x\in\Xc$$ for a subordinator $\{S_t\,:\:t\geq 0\}$.} 
\end{cor}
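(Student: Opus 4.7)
The plan is to derive this corollary as a more or less immediate consequence of pairing Theorem~\ref{thm:markovcc} with Theorem~\ref{thm:main}: the former tells us that every canonically auto-correlated $\Gamma_{cP_0}$ reversible Markov process arises as a Dawson--Watanabe process time-changed by an independent one-dimensional subordinator, while the latter tells us how to read off the directing kernel $Q_x$ of the joint Laplace functional of any canonically correlated pair. The corollary is essentially the assertion that these two descriptions of the same bivariate law agree when one carries out the identification explicitly.

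For the necessity direction, I would first invoke Theorem~\ref{thm:markovcc} to write $\xi \stackrel{d}{=} (\xi^{DW}_{S_t} : t \geq 0)$ for a $\Gamma_{cP_0}$-DW process $\xi^{DW}$ with criticality parameter $\lambda=1$ and an independent one-dimensional subordinator $S$. Fix $t \geq 0$ and condition on $S_t = s$: the pair $(\xi_0, \xi_t)$ then has the same law as $(\xi^{DW}_0, \xi^{DW}_s)$. By the analysis of Section~5.1 (in particular the joint Laplace formula \eqref{eq:dwjlt} with $\lambda=1$ and its identification via Theorem~\ref{thm:main}), this law has the representation \eqref{eq:repres_biv_lapl} with $K$ given by \eqref{eq:kmeasurenew} and deterministic directing kernel $Q_x = \delta_{z}$, constantly in $x$, where $z = e^{-s/2}$. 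Integrating over the law of $S_t$ then yields precisely the representation \eqref{eq:repres_biv_lapl} with random directing kernel $Q_x = \delta_{e^{-S_{t/2}}}$ (up to the usual half--time rescaling absorbed into $S$), uniformly over $x \in \Xc$.

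For the sufficiency direction, I would turn the previous argument around. If the joint Laplace of $(\xi_0, \xi_t)$ has the stated form for every $t \geq 0$, then Theorem~\ref{thm:main} immediately implies that $(\xi_0, \xi_t)$ is a canonically correlated pair of $\Gamma_{cP_0}$ CRMs for every $t$, which is exactly the definition of canonical autocorrelations for the reversible Markov process $\xi$. One then recovers the subordinated DW structure via Theorem~\ref{thm:markovcc}, closing the equivalence.

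The main (and fairly mild) obstacle is bookkeeping around the time rescaling: Section~5.1 parametrises the DW canonical correlations as $z = e^{-t/2}$, so when the DW clock is run along $S_t$ the exponent acquires a factor of $1/2$ that must be consistently attributed either to the subordinator or to the time argument ($e^{-S_t/2}$ versus $e^{-S_{t/2}}$). Apart from this, the only technical point to check is that the unconditioning on $S_t$ produces a legitimate random kernel in the sense of Theorem~\ref{thm:main}, but this is automatic because the map $s \mapsto \delta_{e^{-s/2}}$ is measurable, so the randomised kernel is a measurable functional of the single scalar random variable $S_t$ and fits the hypotheses of that theorem with no modification.
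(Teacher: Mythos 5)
Your argument is correct and follows essentially the same route as the paper: the paper's own (very brief) proof likewise obtains the kernel $Q_x=\delta_{e^{-S_{t/2}}}$ by applying the time-change of Theorem~\ref{thm:markovcc} to the degenerate-constant kernel $\delta_{e^{-t/2}}$ identified for the $\Gamma$-DW process in Section~\ref{sec:illustr}, with sufficiency read off from Theorem~\ref{thm:main}. Your additional remarks on conditioning on $S_t=s$ and on the $S_{t/2}$ versus $S_t/2$ bookkeeping simply make explicit what the paper leaves implicit.
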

\begin{proof} The form of the kernel $Q_x=\delta_{e^{-S_{t/2}}}$ follows from the time-change $t\mapsto S_{t/2}$ applied to $\xi$.
\end{proof}

% \subsection{Algorithms and further properties}
%
%The system of finite--dimensional distributions of a subordinated DW process $\xi^S$ is such that the distribution of $(\xi^S_s,\xi^S_{s+t})$ is a member of the class examined in Section \ref{sec:ex1}, for fixed $s,t\ge 0$. In that class, the dependence between two canonically correlated gamma measure was described by a random measure $K\stackrel{d}{=} Z\: \wt{\mu}$, where $\wt{\mu}$ is a $\Gamma_{\theta P_0}$ CRM and $Z$ is a random variable independent of $\wt{\mu}$ and whose distribution does not depend on $\Ac.$  For subordinated DW processes this property can be rephrased by saying that $\xi^S_s$ and $\xi^S_{s+t}$ are canonically correlated and their dependence is determined by a random measure $K_t$ such that $$K_t\overset d=\edr^{-S_t}\wt{\mu}.$$
%Here we stress two important consequences. The first one is about an algorithm for simulation from the transition function of $\bm{X}^\Ac(S).$\\[4pt]

\begin{rmk}
Let $\{\xi_t:\: t\geq 0\}$ be a measure-valued reversible Markov process with canonical autocorrelations and $\Gamma_{cP_0}$ stationary distribution. Theorem \ref{thm:markovcc} and Corollary~\ref{cor:dwalg} imply that there exists a $\R_+$-valued subordinator $S=\{S_t:\: t\geq 0\}$ such that, for every $t\geq 0,$ Algorithm A.4 initialised by the probability distribution of $\edr^{-S_t},$ generates a realisation of $\{(\xi_0(A_i),\xi_t(A_i)):\: i=1,\ldots,d\}$ for every $d$ and every measurable partition $A=(A_1,\ldots,A_d)$ of $\Xc$.%Then there exists a subordinator $\{S_t:\: t\geq 0\}$ such that the finite-dimensional distributions of the pair $(\wt\mu_0,\wt\mu_t)$ can be simulated, for every $t,$ via the following extension of Algorithm A3. For every measurable partition $A=(A_1,\ldots,A_d)$ of $\Xc:$\\
\end{rmk}

We conclude this Section by pointing out another interesting consequence %comes from comparing the family of canonical correlations measures $K(\cdot)\overset d = Z\wt{\mu}(\cdot)$, with
 of Theorem~\ref{thm:main}.  Suppose $S$ is a degenerate process such that, for every $t,$ $S_t=2ct$ for some positive constant $2c$. If $\xi$ is a $\Gamma$-DW process with criticality $\lambda=1$, the canonical autocorrelations of $\{\xi^S_t:\: t\ge 0\}$ are determined by powers of $z=\edr^{-c t}$. We have seen in Section \ref{sec:ex1} that this forms the canonical correlation sequence of a re-scaled DW process with criticality parameter $\lambda=2c.$ These are the only instances of Markov canonical autocorrelations directed by a deterministic probability kernel (namely, of the form $Q_x=\delta_{e^{-ct}}$ for all $x$). In all other cases where a non-trivial subordinator $S$ is involved in the time-change, the kernel becomes degenerate at a random constant, i.e.  $Q_x=\delta_{e^{-S(t)}}$ for all $x$, yielding, for every $t$, time-bivariate distributions with a structure as in Section \ref{sec:ex1}. By  the last assertion of Theorem  \ref{thm:main} % \ref{thm:iff_crm_vect}, 
this just proves a new characterisation of the DW process:

\begin{thm}
Let $\{\xi^*_t:\: t\geq 0\}$ be a measure-valued Markov process with $\Gamma_{cP_0}$ stationary distribution and canonical auto-correlations. The pair $(\xi^*_0,\xi^*_t)$ is a bivariate completely random measure, for every $t$, if and only if it is a DW measure-valued branching diffusion with immigration, up to a deterministic re-scaling, namely $\{\xi^*_t:\: t\ge 0\}=\{\lambda \xi_\lambda(t):\: t\ge 0\}$ for some $\lambda\in\R$.
\end{thm}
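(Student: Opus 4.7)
The plan is to combine the structural characterisation in Theorem~\ref{thm:markovcc} with the last clause of Theorem~\ref{thm:main}, which states that a canonically correlated pair of Gamma CRMs is itself a CRM vector precisely when its directing kernel $Q$ is deterministic. The key intermediate ingredient is the Corollary immediately preceding the statement, which identifies this directing kernel explicitly in terms of the subordinator implicit in the time change.

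The ``if'' direction is essentially read off from Section~5.1: when $\xi^*_t=\lambda\,\xi_\lambda(t)$ is a rescaled $\Gamma_{cP_0}$-DW process, the joint Laplace functional \eqref{eq:dwjlt} displays the directing kernel of $(\xi^*_0,\xi^*_t)$ as the deterministic Dirac mass $Q_x=\delta_{z}$ with $z=\edr^{-\lambda t/2}$, so the last assertion of Theorem~\ref{thm:main} yields that the pair is a bivariate CRM. For the substantive ``only if'' direction I would proceed as follows. Theorem~\ref{thm:markovcc} produces a one-dimensional subordinator $S=\{S_t:t\ge 0\}$, independent of a $\Gamma_{cP_0}$-DW process $\xi^{DW}$ of criticality $1$, such that $\{\xi^*_t\}\overset{d}{=}\{\xi^{DW}_{S_t}\}$. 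The Corollary just before the present theorem then identifies the directing kernel of $(\xi^*_0,\xi^*_t)$ as $Q_x=\delta_{\exp(-S_{t/2})}$ for every $x\in\Xc$, which is random precisely through the randomness of $S_{t/2}$. Invoking the last clause of Theorem~\ref{thm:main} forces this kernel to be deterministic for every $t\geq 0$, so $S_s$ must be a.s.\ constant for every $s\geq 0$.

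The crux of the argument, and the only place where a genuine calculation is called for, is concluding that a subordinator whose values at every time are a.s.\ constant must be a pure drift. I would read this off directly from the L\'evy--Khintchine representation: if $\E[\edr^{-\phi S_s}]=\edr^{-s\psi(\phi)}$ agrees, for every $s\geq 0$, with the Laplace transform of a point mass, then $\psi$ must be linear, $\psi(\phi)=c\phi$, killing the L\'evy jump measure. Setting $c=\lambda/2$ we obtain $\xi^*_t\overset{d}{=}\xi^{DW}_{\lambda t/2}$, which by the rescaling identity discussed in the paragraph preceding the statement coincides in law with $\{\lambda\,\xi_\lambda(t):t\ge 0\}$. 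I expect this final L\'evy--Khintchine step to be the only delicate point; everything else is assembly of results already proved in the paper.
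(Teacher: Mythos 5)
Your proposal is correct and follows essentially the same route as the paper: combine Theorem~\ref{thm:markovcc} (subordinated DW representation) with the corollary identifying the directing kernel as $Q_x=\delta_{\exp(-S_{t/2})}$, then invoke the last assertion of Theorem~\ref{thm:main} to force the kernel, hence $S_t$, to be deterministic at every time. The only difference is that you spell out the L\'evy--Khintchine step showing a subordinator that is a.s.\ constant at each time must be pure drift, a point the paper leaves implicit in its concluding discussion.
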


\end{document}